\newtheorem{theorem}{Theorem}
\newtheorem{corollary}{Corollary}
\newtheorem{lemma}{Lemma}
\newtheorem{proposition}{Proposition}
\newtheorem{remark}{Remark}
\newcommand{\N}{{\mathbb N}}
\newcommand{\R}{{\mathbb R}}
\newcommand{\pa}{{\partial}}
\newcommand{\na}{{\nabla}}
\newcommand{\eps}{{\varepsilon}}
\newcommand{\Hc}{\mathcal{H}}
\renewcommand{\a}{\alpha}
\renewcommand{\l}{\lambda}
\newcommand{\e}{\varepsilon}
\def\curl{\hbox{curl \!}}
\def\div{\hbox{div  }}
\def\divy{\hbox{div}_y }
\def\np{\dot{n}}
\def\up{\dot{u}}
\def\fp{\dot{\phi}}
\def\ZZ{\mathcal{Z}}
\begin{document}

\title{Quasineutral limit of the Euler-Poisson system for ions\\ in a domain with boundaries}
\author{David G\'erard-Varet}
\address{D. G\'erard-Varet \\ IMJ, Universit\'e Denis Diderot, 175 rue du Chevaleret, 75013 Paris} 
\email{gerard-varet@math.jussieu.fr}
\author{Daniel Han-Kwan}
 \address{D. Han-Kwan \\ DMA, ENS, 45 rue d'Ulm, 75005 Paris}
\email{daniel.han-kwan@ens.fr}
 \author{Fr\'ed\'eric Rousset}
  \address{F. Rousset \\ IRMAR, 263 Avenue du G\'en\'eral Leclerc, 35042 Rennes}
  \email{frederic.rousset@univ-rennes1.fr}
  
  \maketitle
  
\begin{abstract}
We study the quasineutral limit
of the isothermal Euler-Poisson system describing a plasma made of ions and massless electrons. The analysis is achieved in a domain of $\R^3$ and thus 
extends  former results by  Cordier and Grenier  [Comm. Partial Differential Equations, 25 (2000),
pp.~1099--1113], who dealt with the same problem  in a one-dimensional domain without boundary.
\end{abstract}

\section{Introduction}
We  consider the isothermal Euler-Poisson system which models the behaviour of ions in a background of massless electrons. The massless assumption implies that the electrons follow the classical Maxwell-Boltzmann relation: denoting by $n_e$ their density and after some suitable normalization of the constants, this reads
$$
n_e \: = \:   e^{-\phi},
$$
where $-\phi$ is the electric potential (auto-induced by the charged particles), which satisfies some \emph{Poisson equation}.

Introducing  the characteristic observation length $L$ and the Debye length $\l_D= \sqrt{\frac{\eps_0 k_B T^i}{N_i e^2}}$ of the plasma  (where $T^i$ is the average temperature of the ions, and $N_i$ their average density), we are interested in the study of the behaviour of the system, when 
$$
\frac{\l_D}{L} = \e \ll 1,
$$
which corresponds to the (usual) physical situation where the observation length is much larger than the Debye length (indeed, in usual regimes, $\l_D \sim 10^{-5}-10^{-8} m$). Loosely speaking, the Poisson equation satisfied by $-\phi$ then reads:
$$
\eps^2 \Delta_x \phi \: = \: n_i\: - \: n_e.
$$
In situations where $\eps \ll 1$, this indicates that the plasma can be considered as being ``almost'' neutral (that is $n_i \sim n_e$): hence the name quasineutral limit for the limit $\eps \rightarrow 0$.
However, in a domain with boundaries, it is well-known that there can be some interaction between the plasma and the boundary, so that small scale effects are amplified and boundary layers generally appear. This implies that quasineutrality breaks down near the borders.

In this paper, our goal is to rigorously study this phenomenon of boundary layers by investigating their existence and their stability.
More precisely, the system we work on is the following dimensionless isothermal Euler-Poisson system, for $t > 0$ and  $ x = (x_1,x_2,x_3) = (y,x_3) \in \R^3_+:= \R^2 \times \R_+$:
\begin{equation} \label{EP}
\left\{
\begin{aligned}
 & \pa_t n \: + \:  \div(n u) \:  = \: 0, \\
 & \pa_t u  \: + \:  u \cdot \na u  \: + \: T^i \,  \na \ln(n)= \na \phi, \\
& \eps^2 \Delta \phi \: + \: e^{-\phi}  = n, 
\end{aligned}
\right. 
\end{equation}
where $n$ is the density of ions,  $u = (u_1,u_2,u_3) = (u_y, u_3)$ is their velocity field, and $-\phi$ is the electric potential.  

We complete this system with the  boundary conditions 
\begin{equation} \label{BC}
u_3\vert_{x_3 =0} = 0, \quad \phi\vert_{x_3 = 0} = \phi_b, 
\end{equation}
where $\phi_b$ is some prescribed potential.  We assume that   $\phi_b = \phi_b(y)$ is smooth and compactly supported. Otherwise, it is also possible to consider with minor modifications the case where $\phi_b$ is independent of $y$, which corresponds to the perfect conductor assumption:
$$
\na_y \phi =0.
$$
The  boundary condition on $u$ corresponds to a non-penetration condition. From the physical point of view, this can be interpreted as a kind of confinement condition.
Non homogeneous  boundary conditions are also physically pertinent, and some cases could be treated in our framework: we refer to Remark \ref{rkother} and Section \ref{other}.  Also we shall focus in our analysis on the three-dimensional case  and the simplest   domain  $\mathbb{R}^3_{+}$.
Nevertheless, our analysis can be easily extended to any dimension or more general domains.

\bigskip
The Euler-Poisson system and its asymptotic limits have attracted a lot of attention over the past two decades, possibly due to its many applications to plasmas and semiconductors (see for instance \cite{LL,Mar}). Among many possibilities, the so-called quasineutral limit, zero-electron-mass limit, zero-relaxation-time limit have been particularly studied.

For the sake of brevity, we only focus here on the quasineutral limit and describe some mathematical contributions.
For the corresponding Euler-Poisson system describing electrons (with fixed ions), in a domain without boundary,  there have been many mathematical studies of the quasineutral limit (for various pressure laws) and the situation is rather well understood; we refer for instance to \cite{JP,PWaa,Loe,WAN,PW2} and references therein.
The quasineutral limit for Two-fluid Euler-Poisson systems (one for ions and one for electrons), still without boundary, has been also recently investigated in \cite{JJLL1,JJLL2}.

Considering the dynamics of ions with electrons following a Maxwell-Boltzmann law (which is the one we are particularly interested in), Cordier and Grenier studied in \cite{CG} the quasineutral limit of the Euler-Poisson system \eqref{EP}, in the whole space $\R$. A kinetic version of the problem was also recently studied in \cite{HK}.
We also mention the work \cite{GP} of Guo and Pausader who recently constructed global smooth irrotational solutions to \eqref{EP} with $\e=1$ in the whole space $\R^3$. 

\medskip 
For quasineutral limits in presence of boundaries, there have been very few rigorous studies, at least to the best of our knowledge.
When the Poisson equation is considered alone (the density of the ions being prescribed), a boundary layer analysis was led by Br\'ezis, Golse and Sentis \cite{BGS} and Ambroso, M\'ehats and Raviart \cite{AMR}.
For the Poisson equation coupled with some equations for the plasma flow, the are only two main results we are aware of.
The first one is due to Slemrod and Sternberg \cite{SS}, who dealt with the quasineutral limit of some Euler-Poisson models (with massless electrons) in a one-dimensional and stationary case (see also Peng \cite{Peng} for general boundary conditions). The second is due to Peng and Wang \cite{PW} and Violet \cite{VIO}, who studied the quasineutral limit of some Euler-Poisson model (with fixed ions) for stationary and irrotational flows.

To conclude this short review, let us finally mention some other interesting directions  which are related to the issue of the quasineutral limit in a domain.
In \cite{SUZ}, Suzuki studies the asymptotic stability of specific stationary solutions to some Euler-Poisson system, which are interpreted as ``boundary layers''  (see also the paper of Ambroso \cite{AMB} for a numerical study).
In \cite{HS,FHS} and references therein, Slemrod et al. study the problem of formation and dynamics of the so-called plasma sheath.
We refer also to the work  \cite{CDV} of Crispel, Degond and Vignal for a study of a two-fluid quasineutral plasma, based on a formal expansion.

\bigskip
In this paper, we study the quasineutral limit of \eqref{EP}-\eqref{BC}.
As far as we know, our results  which are described in  the next section, give the first description and stability analysis of boundary layers for the quasineutral limit  for the full model  in a non stationary setting (and we are in a multi-dimensional framework as well).

\section{Main results and strategy of the proof}

By putting \emph{formally} $\eps=0$ in the Poisson equation of \eqref{EP}, we first directly obtain the \emph{neutrality} condition:
$$
n=e^{-\phi},
$$
so that one gets from \eqref{EP} in the limit $\eps \rightarrow 0$ the following isothermal Euler system:
\begin{equation} \label{EI}
\left\{
\begin{aligned}
 & \pa_t n \: + \:  \div(n u) \:  = \:  0, \\
 & \pa_t u  \: + \:  u \cdot \na u  \: + \: (T^i+1) \na \ln(n)= 0. 
\end{aligned}
\right. 
\end{equation}
It is natural to expect the convergence to this system. The previous system is well-posed with the only boundary condition
$$ u_{3}\vert_{x_3=0} = 0.$$ 
Because of the boundary conditions \eqref{BC}, we would like to satisfy also the condition
$$ \quad 
\phi\vert_{x_3=0}= \phi_b.
$$
Nevertheless, the solution  $(n, u)$ of \eqref{EI}  cannot  in general satisfy in addition $n\vert_{x_3=0} = e^{-\phi_b}$ and hence,  we  expect the formation of a boundary layer in order to correct this boundary condition.

\bigskip

We shall consider
 solutions of \eqref{EI} with no vacuum: we fix  a reference  smooth  function  $n^{ref}$ which is bounded from
  below by a positive constant and  consider solutions of  \eqref{EI} for which $n$ is bounded from below by a positive constant and
    such that $n- n^{ref}$ is  in $H^s$.  The main result contained in this paper is a rigorous proof of the convergence to the isothermal Euler system \eqref{EI}. 

\begin{theorem}
\label{theomain}
Let $(n^{0}, u^{0})$  a solution to \eqref{EI} such that
$(n^0- n^{ref}, u^0)\in C^0([0,T],H^s(\R^3_{+}))$
 with $s$ large enough. There is $\e_0>0$ such that for any $\e \in (0,\e_0]$, there exists $(n^\e,u^\e)$ a solution to \eqref{EP}-\eqref{BC} also defined on $[0,T]$ such that
 \begin{equation}
  \begin{aligned}
&\sup_{[0,T]}\left( \| n^\e- n^0 \|_{L^2(\R^3_+)}  +   \| u^\e- u^0 \|_{L^2(\R^3_+)}\right) \rightarrow_{\eps \rightarrow 0} 0.
  \end{aligned}
  \end{equation}
  Furthermore, the rate of convergence is $O(\sqrt{\e})$.
    \end{theorem}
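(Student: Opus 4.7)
The plan is to construct an approximate solution $(n^\e_{app},u^\e_{app},\phi^\e_{app})$ combining an outer expansion in powers of $\e$ with an inner boundary-layer expansion in the stretched variable $z=x_3/\e$, and then to control the remainder by a uniform-in-$\e$ energy estimate. At leading order the outer piece is the given $(n^0,u^0,-\ln n^0)$, which solves \eqref{EI} together with the quasineutrality relation, while a boundary layer is needed at $x_3=0$ to restore $\phi\vert_{x_3=0}=\phi_b$. Plugging $(n,\phi)=(n^0(t,y,0)+N,-\ln n^0(t,y,0)+\Phi)$ into \eqref{EP} in the variable $z$, the $\e^{-1}$ balance of the $x_3$-momentum equation gives $T^i\pa_z\ln(n^0(t,y,0)+N)=\pa_z\Phi$; integrating in $z$ using decay at $+\infty$ yields $N=n^0(t,y,0)\bigl(e^{\Phi/T^i}-1\bigr)$. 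Substituting into the $\e^0$ Poisson balance produces the Poisson--Boltzmann ODE
\begin{equation*}
\pa_z^2\Phi=n^0(t,y,0)\bigl(e^{\Phi/T^i}-e^{-\Phi}\bigr),\qquad \Phi\vert_{z=0}=\phi_b(y)+\ln n^0(t,y,0),\quad \Phi\to 0 \text{ as } z\to\infty.
\end{equation*}
Since the right-hand side has the sign of $\Phi$ and positive derivative $n^0(t,y,0)(1+1/T^i)$ at $0$, the first integral $\tfrac12(\pa_z\Phi)^2=n^0(t,y,0)\bigl(T^i(e^{\Phi/T^i}-1)+e^{-\Phi}-1\bigr)$ gives a unique exponentially decaying smooth solution, depending smoothly on $(t,y)$ through $n^0$ and $\phi_b$.

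Because the leading boundary layer already contributes $O(\sqrt\e)$ to $\|n^\e_{app}-n^0\|_{L^2(\R^3_+)}$ (its $L^2$ mass scales as $\e^{1/2}$ because of the $\e$-thickness), I continue the expansion to some order $K$, constructing interior correctors $(n^k,u^k,\phi^k)$ solving linearized Euler equations around $(n^0,u^0)$, and layer correctors $(N^k,U^k,\Phi^k)$ solving linearizations of the profile system, with source terms arising from matching and from non-decay of tangential divergences. The conditions $U_3^k\vert_{z=0}=0$ preserve non-penetration at each order. The ansatz
\begin{equation*}
(n^\e_{app},u^\e_{app},\phi^\e_{app})=\sum_{k=0}^{K} \e^k\left[(n^k,u^k,\phi^k)(t,x)+(N^k,U^k,\Phi^k)(t,y,x_3/\e)\right]
\end{equation*}
then satisfies \eqref{EP}--\eqref{BC} up to a residual $R^\e$ of size $O(\e^K)$ in appropriate (conormal) Sobolev norms.

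Writing $(n^\e,u^\e,\phi^\e)=(n^\e_{app}+\d n,u^\e_{app}+\d u,\phi^\e_{app}+\d\phi)$, the remainder solves a quasilinear Euler-type system coupled to the linearized Poisson equation $(-\e^2\D+e^{-\phi^\e_{app}})\d\phi=\d n+\mathcal N(\d n,\d\phi)+R^\e$, with boundary conditions $\d u_3\vert_{x_3=0}=0$ and $\d\phi\vert_{x_3=0}=0$. The core step is a uniform-in-$\e$ high-order energy estimate: the coercivity of $-\e^2\D+e^{-\phi^\e_{app}}$ and the homogeneous Dirichlet condition on $\d\phi$ yield the elliptic bound $\|\d\phi\|_{L^2}+\e\|\na\d\phi\|_{L^2}\lesssim\|\d n\|_{L^2}+\|R^\e\|_{L^2}$, so the apparently singular Poisson coupling is tamed without losing powers of $\e$. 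The Euler part is treated via its symmetrizer, and boundary terms coming from the non-penetration and Dirichlet conditions cancel in the energy identity. A Gronwall argument closes the estimate on $[0,T]$ for $\e\leq\e_0$ and gives remainders of size $o(\sqrt\e)$ in $L^2$; combining this with the explicit $O(\sqrt\e)$ contribution of the leading layer yields Theorem \ref{theomain}.

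The main obstacle will be the uniform-in-$\e$ quasilinear estimate in the half-space. Normal derivatives hit a boundary layer whose gradient is $O(1/\e)$, while the coupling involves a singular elliptic operator. Tangential regularity is accessible by commuting $\pa_t$ and $\pa_y$ with the system, but recovering full Sobolev regularity requires reading normal derivatives from the equations themselves, carefully using the entropic/symmetric structure of the Euler part and the Bohm-type positivity of the linearized Poisson operator so that no negative powers of $\e$ appear in the resulting estimate. The exact framework (conormal Sobolev norms, choice of good unknowns) and the interaction of these norms with the exponentially decaying profiles is where the delicate bookkeeping concentrates.
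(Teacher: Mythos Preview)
Your overall architecture---outer/inner expansion, Poisson--Boltzmann profile equation, high-order approximate solution, then a uniform-in-$\e$ stability estimate closed by Gronwall---matches the paper's, and your description of the higher-order machinery (conormal norms, recovering normal derivatives from the equations) is accurate. But the mechanism you propose for the $L^2$ stability estimate has a genuine gap. You write that the elliptic bound $\|\d\phi\|_{L^2}+\e\|\na\d\phi\|_{L^2}\lesssim\|\d n\|_{L^2}$ ``tames'' the Poisson coupling and that the Euler part is then handled by its symmetrizer. This decoupled scheme fails: the force in the momentum equation is $\na\d\phi$, and your elliptic estimate only gives $\|\na\d\phi\|_{L^2}\lesssim\e^{-1}\|\d n\|_{L^2}$, which is exactly the singular loss you are trying to avoid. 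The paper does \emph{not} treat $\na\d\phi$ as a source controlled by ellipticity; instead it uses a single modulated energy
\[
\int (n_a+n)\frac{|\d u|^2}{2}+T^i\frac{(\d n)^2}{2(n_a+n)}+\frac{\e^2}{2}|\na\d\phi|^2+\frac12 e^{-\phi_a}(1+h)(\d\phi)^2,
\]
in which the electric force term $\int(n_a+n)\,\d u\cdot\na\d\phi$ is converted, via the continuity equation and the \emph{time-differentiated} Poisson equation, into the exact time derivative of the last two pieces. This is where the $\e$-uniformity actually comes from.

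There is a second, related point you pass over: the analysis forces the leading velocity layer to vanish identically, $U^0\equiv 0$ (both components), not merely $U^0_3\vert_{z=0}=0$. This is essential in the energy estimate, because the linearization around the approximate solution produces zero-order terms like $\frac{\na n_a}{n_a}\frac{\d n}{n_a+n}$ and convection terms $(u_a+u)\cdot\na n_a$, all carrying a $1/\e$ from $\pa_3 n_a$. The paper neutralizes these by using that $(u_a+u)_3$ vanishes on the boundary \emph{with no layer}, hence $(u_a+u)_3=O(x_3)$, which cancels the $x_3/\e$ decay of the profile gradient (their estimate \eqref{cancel}). Without explicitly securing $U^0=0$ and exploiting this cancellation, a naive symmetrizer estimate picks up an uncontrolled $\e^{-1}$ and the Gronwall argument does not close on $[0,T]$.
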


\begin{remark}
Let us recall that following the classical theory of the initial boundary value problem for  the compressible Euler equation (see \cite{SCH,Gues}
 for example), asking  $(n^{0}, u^{0})$ to be a smooth solution  to the compressible Euler system  imposes some compatibility conditions on  the initial data $(n^{0}\vert_{t=0}, u^{0}\vert_{t=0})$.
\end{remark}

Boundary layers are hidden in the statement of Theorem \ref{theomain} (recall that they are expected to appear since the  boundary conditions do not match). Nevertheless, in order to give a proof, we will have to describe them precisely.
Indeed, for the system \eqref{EP}  in multi-d,   the only existence result that  is known is the local existence result of
 smooth ($H^m$, $m > 5/2$) solution that can be obtained from the theory of the compressible Euler equation (see \cite{SCH, Gues} for example).
A  nontrivial part of the statement of Theorem \ref{theomain}  is thus that   the solution of   \eqref{EP}
 is defined on an interval of time independent of $\eps$. In  $\mathbb{R}^3$  this  is a consequence of the fact that  \eqref{EP} enjoys  $H^s$ estimates
  that are uniform in $\eps$. These estimates were  obtained in \cite{CG} by  using that \eqref{EP} can be rewritten
   as a symmetric  hyperbolic system perturbed by a large skew-symmetric operator  and thus can be cast in 
    the general framework   of \cite{Grenierpseudo} that generalizes the classical theory of Klainerman and Majda \cite{Klainerman-Majda}
     on the incompressible limit. When boundaries are present
      it seems unlikely to derive directly uniform  $H^s$ estimates of  \eqref{EP}. Indeed the expected
       description of the solution  $(n^\eps, u^\eps, \phi^\eps)$ of \eqref{EP} is under the form 
      \begin{equation} 
      \label{formelintro} n^\eps=  n^0(t,x)+ N^0\left(t,y, {x_{3} \over  \eps}\right)+ \mathcal{O}(\eps) , \quad \phi^\eps= \phi^0(t,x)+ \Phi^0\left(t,y, {x_{3} \over  \eps}\right)+ \mathcal{O}(\eps) , 
        \quad u^\eps =u^0(t,x)+ \mathcal{O}(\eps)
        \end{equation}
        where $(n^0, u^0)$ is the solution of \eqref{EI}, $ \phi^0= - \log n^0$ and the profiles 
         $N^0$, $\Phi^0$ (the  boundary layer profiles) are  smooth and fastly decreasing in the last variable and are added in order to match the
          boundary condition \eqref{BC}. In particular, we require
         $$   \phi^0(t,y,0)+ \Phi^0\left(t,y,  0\right)= \phi^b(y).$$
Note that  at leading order, there is no boundary layer for the velocity but that in general, since the solution of \eqref{EI}  does not satisfy $n\vert_{x_3=0} = e^{-\phi_b}$, the profiles $\Phi^0$
 and $N^0$ are not zero for positive times. This yields that  $n^\eps$ cannot satisfy uniform $H^m$, $m > 5/2$ estimates.
 
 In order to prove Theorem \ref{theomain}, we shall  use a two step argument. We shall first prove that,  up to any order, there exists  
    an approximate solution    $(n^\eps_{app}, u^\eps_{app}, \phi^\eps_{app})$   for \eqref{EP}, 
  in the form of a two-scale asymptotic expansion, 
  and then, we shall prove that  we  can get a true solution of \eqref{EP} defined on $[0, T]$
   by adding a small corrector to the approximate solution. The description of the solution  that we get is thus more precise than
    stated in Theorem \ref{theomain}. We refer  to Theorem \ref{main} and Corollary \ref{coro} which are stated in Section \ref{stability}.     
In particular, as a simple corollary of our estimates, we will  get that  the behavior  in $L^\infty$ is given by:
 \begin{equation}
  \begin{aligned}
\sup_{[0,T_0]}\Big(  \left\| n^\eps- n^0 - N^0\left(t,y,\frac{x_3}{\eps}\right)\right\|_{L^\infty(\R^3_+)}  & \: +  \:  \| u^\eps- u^0 \|_{L^\infty(\R^3_+)}  \\
  & \: + \:   \left\| \phi^\eps- \phi^0 - \Phi^0\left(t,y,\frac{x_3}{\eps}\right)\right\|_{L^\infty(\R^3_+)}\Big)\rightarrow 0.
  \end{aligned}
  \end{equation}
  
  This two step approach has been  very useful   to deal with other  asymptotic  problems involving  as above boundary layers  of size $\eps$ and amplitude
   ${O}(1)$  like   the  vanishing viscosity limit to general hyperbolic systems when the boundary is non-characteristic
   \cite{Gisclon-Serre,Grenier-Gues,Grenier-Rousset,Metivier-Zumbrun}, see also \cite{Temam-Wang},  or the  highly rotating fluids limit
    \cite{Grenier-Masmoudi,Gerard-Varet,Rousset}.  A common feature of these works is  the fact that the  true solution of the system remains close to the approximate one
    on an interval of time that does  not shrink when $\eps$ goes to zero (i.e. the stability of the approximate solution) requires a spectral assumption on the  main boundary layer
     profile  (that is automatically satisfied when the  amplitude of the boundary layer  is sufficiently weak). 
    One quite striking fact about  the quasineutral limit  is that  we shall establish here the   existence and   \emph{unconditional stability} of 
    sufficiently accurate approximate solutions involving  boundary layers of arbitrary amplitude.
\medskip

Since the boundary layer profiles solve ordinary differential equations, we shall be  able to get their existence (without  any restriction on their amplitude)
  by using  classical properties of planar Hamiltonian systems.
 In order to prove  the stability of the  approximate solution, 
  there are two difficulties:  the  fact that the electric field is very large when $\eps$ is small
  (this difficulty is already present when there is no boundary) and the fact that the  linearization of \eqref{EP} about  the approximate solution
   contains zero order terms that involve  the gradient  of the approximate solution and thus that are also  singular when $\eps$ goes
    to zero  because of the presence 
    of boundary layers.
  We shall
 avoid the pseudo-differential approach of \cite{CG} which does not seem to be easily adaptable  when there is a boundary
    and  get an  $L^2$ type estimate which is uniform in $\eps$  by  using  a suitably   \emph{modulated} linearized version of the physical energy of  the Euler-Poisson system. We refer for example  to \cite{Grenier,Chiron-Rousset} for other uses of this approach in singular perturbation problems. 
     In order to estimate higher order derivatives,  we shall  proceed in a classical way by estimating  conormal derivatives of the solution
     and then by using the equation to recover estimates for normal derivatives. Since  the boundary is characteristic  this does not provide
      any information on  the normal derivative of the tangential velocity but we can estimate it by using the vorticity equation.

\medskip

Theorem \ref{theomain} (as well as Theorem \ref{main} and Corollary \ref{coro}) can be generalized to more general settings; this is the purpose of the two following remarks.

\begin{remark}[General domains]
Less stringent settings for the domain could also be treated. Instead of the half-space $\R^3_+$, we can as well consider the case of a smooth domain $\Omega \subset \R^3$ whose boundary is an orientable and compact manifold. We refer to Section \ref{ouvertregulier} for some elements on how one could adapt the proof.
\end{remark}

\begin{remark}[Non-homogeneous boundary condition on $u$]
\label{rkother}
Finally, it is possible to adapt the proof in order to study, instead of the non-penetration condition on $u$, the case of some subsonic  outflow   condition on the normal velocity  $u_{3}$. We refer to Section \ref{other} for some details on what changes in the proof.
\end{remark}

This paper is organized as follows. Section \ref{construction} is dedicated to the construction of a high order boundary layer approximation to a solution of \eqref{EP} (see Theorem \ref{deriv}). In Section \ref{stability}, we prove the stability of this approximation ({\it cf} Theorem \ref{main}), thus proving the convergence to isothermal Euler.  The possible extensions and variations about the result are discussed in the last section.

\section{Construction of boundary layer approximations}
\label{construction}
We look for approximate solutions of system \eqref{EP} of the form
\begin{equation} \label{ansatz}
\begin{aligned}
(n_{a},u_{a}, \phi_{a})    & \: = \:  \sum_{i=0}^K \eps^i \left( n^i(t,x), u^i(t,x), \phi^i(t,x) \right) \\  & \: + \: \sum_{i=0}^K \eps^i \left( N^i\left(t,y,\frac{x_3}{\eps}\right), U^i\left(t,y,\frac{x_3}{\eps}\right), \Phi^i\left(t,y,\frac{x_3}{\eps}\right)\right) 
\end{aligned}
\end{equation}
where $K$ is an arbitrarily  large integer. The coefficients $(n^i, u^i, \phi^i)$ of the first sum depend on $x$. They should model the macroscopic behaviour of the solutions.  The coefficients $(N^i, U^i, \Phi^i)$  of the second sum  depend on $t,y$, but also on a   stretched variable $z = \frac{x_3}{\eps} \in \R_+$. They should model a boundary layer  of size $\eps$ near the boundary. Accordingly, we shall ensure that 
\begin{equation} \label{decay}
 (N^i, U^i, \Phi^i) \rightarrow 0, \quad \mbox{  as }  \: z \rightarrow +\infty 
 \end{equation}
 fast enough.  In order for  the whole approximation to satisfy \eqref{BC}, we shall  further impose
\begin{equation} \label{dirichlet}
\begin{aligned}
& u^i_3(t,y,0) \: + \: U^i_3(t,y,0) \: = \:  0 \quad  \mbox{¬¨‚Ä†for all  } i \ge 0, \\
& \phi^0(t,y,0) \: + \: \Phi^0(t,y,0) \: = \:  \phi_b, \quad   \phi^i(t,y,0) \: + \: \Phi^i(t,y,0) \: = \:  0   \quad \mbox{¬¨‚Ä†for all } i \ge 1. 
\end{aligned}      
\end{equation}
Thus, far away from the boundary,  the term $(n^0, u^0, \phi^0)$  will govern the dynamics of these approximate solutions.  As explained in the previous section, we expect $(n^0, u^0)$ to satisfy the following compressible Euler system
\begin{equation} \label{E}
\left\{
\begin{aligned}
 & \pa_t n \: + \:  \div(n u) \:  = \:  0, \\
 & \pa_t u  \: + \:  u \cdot \na u  \: + \: (T^i+1) \na \ln(n)= 0,  
\end{aligned}
\right. 
\end{equation}
together with the relation $n^0 = e^{-\phi^0}$.  As we will see below, it will also satisfy the non-penetration condition 
\begin{equation} \label{BCE}
u_3\vert_{x_3 = 0} = 0.
\end{equation}

\bigskip

Our results are gathered in the:

\begin{theorem}
\label{deriv}
Let $K \in \N^*, m \in \N^*$ such that $m\geq 3$. For any  data $(n^0_0,u^0_0)$ satisfying  some compatibility conditions on $\{x_3=0\}$
 and such that  $(n^0_{0}- n^{ref}, u^0_{0}) \in H^{m+2K+ 3}(\R^3_+)$, there is $T>0$ and a smooth approximate   solution of \eqref{EP}
  of order $K$
 under the form \eqref{ansatz} such that

{\bf i)} $(n^{0}- n^{ref}, u^{0}) \in C^0([0,T], H^{m+2K+ 3}(\R^3_+))$,  $\phi^0=- \log n^0$,  $(n^0,u^0)$ is a solution to the isothermal Euler system \eqref{E} with initial data $(n^0_0,u^0_0) $.

{\bf ii)}  $\forall \, 1\leq i \leq K$, $(n^i, u^i, \phi^i)  \in  C^0([0,T], H^{m+3}(\R^3_+)).$

{\bf iii)} $\forall \, 0\leq i \leq K, \, N^i,U^i,\Phi^i$ are smooth functions that  belong together with their derivatives  to the set of  uniformly exponentially decreasing functions  with respect to the last variable.

{\bf iv)} Let us consider $(n^\eps, u^\eps, \phi^\eps)$ a solution to \eqref{EP} and define:
\begin{equation*}
n  \: = \:  n^\eps \:  - n_a , \quad u  \: = \: u^\eps  \: -  u_a , \quad \phi  \: = \:  \phi^\eps \: - \phi_a.
\end{equation*}
Then $(n,u,\phi)$ satisfies the system of equations:
\begin{equation} 
\left\{
\begin{aligned}
& \pa_t  n +  (u_a + u ) \cdot \nabla n + n \,\div (u + u_{a})    + \div  (n_{a} u)  = \eps^{K} R_n,  \\
& \pa_t u + (u_a + u)  \cdot \na u + u \cdot \nabla u_{a}  + T^i   \left(\frac{\na n}{n_a +  n } - \frac{\na n_a}{n_a} \left( \frac{n}{n_a +  n} \right) \right) =  \na \phi + \eps^{K}R_u, \\
& \eps^2 \Delta \phi = n - e^{-\phi_a}( e^{ - \phi} - 1 \big) + \eps^{K+1} R_\phi.
\end{aligned}
\right.
\end{equation}
where $R_n,R_u,R_\phi$ are remainders satisfying:
\begin{equation}
\label{estirestes}
\sup_{[0,T]}\| \nabla_x^\a R_{n,u,\phi} \|_{L^2(\R^3_+)} \leq C_a \eps^{-\a_3}, \quad \forall \a=(\a_1,\a_2,\a_3)\in\N^3, \quad |\a|\leq m,
\end{equation}
with $C_a>0$ independent of $\e$.

\end{theorem}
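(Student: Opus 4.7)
The proof follows the classical two-scale WKB strategy: substitute the ansatz \eqref{ansatz} into \eqref{EP}, expand all nonlinear terms in $\eps$, and separate the contributions into a part depending only on the slow variable $x$ (macroscopic) and a part depending on the fast variable $z = x_3/\eps$ (boundary layer), requiring the latter to decay as $z \to +\infty$. This produces a cascade of equations indexed by the power of $\eps$, which I would solve inductively. At order $\eps^0$ the macroscopic Poisson equation forces $n^0 = e^{-\phi^0}$, while the continuity and momentum equations yield the isothermal Euler system \eqref{E}. The boundary condition $u^0_3|_{x_3=0}=0$ is obtained from \eqref{dirichlet} after one checks that $U^0_3 \equiv 0$, which itself follows from the $\eps^{-1}$ order of the continuity equation combined with the decay at infinity.

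The key step is the construction of the leading boundary layer $(N^0, U^0, \Phi^0)$. A straightforward inspection of the cascade shows $U^0 \equiv 0$ and an algebraic relation between $N^0$ and $\Phi^0$ (so that $N^0+n^0|_{x_3=0} = e^{-\phi^0|_{x_3=0}-\Phi^0}$), leaving a single planar ODE in $z$ for $\Phi^0$, parametrised by $(t,y)$, of the form
\begin{equation*}
\pa_z^2 \Phi^0 = e^{-\phi^0|_{x_3=0}} - e^{-\phi^0|_{x_3=0}-\Phi^0},
\end{equation*}
with boundary value $\phi_b(y) - \phi^0(t,y,0)$ at $z=0$ and decay to $0$ at infinity. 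This equation is Hamiltonian with conserved energy $\frac{1}{2}(\pa_z \Phi^0)^2 + V(\Phi^0;\phi^0|_{x_3=0})$, and a phase-plane analysis shows that the stable manifold of the origin is a single orbit on which $\Phi^0$ is monotone; selecting this orbit yields a unique exponentially decaying profile for \emph{any} amplitude of $\phi_b - \phi^0|_{x_3=0}$, which is the unconditional-existence feature the authors emphasise.

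For $i \ge 1$ the equations for $(n^i, u^i, \phi^i)$ and $(N^i, U^i, \Phi^i)$ are linearisations of \eqref{EP} around the leading profiles, with source terms that are polynomial in the profiles of orders $< i$. The macroscopic unknown $(n^i, u^i)$ solves a linear symmetric hyperbolic system on $\R^3_+$ with a prescribed value of $u^i_3$ at $x_3 = 0$ (determined by $U^i_3(t,y,0)$ from the previous step), which is well posed in Sobolev spaces under the compatibility conditions imposed on the initial data, yielding the required $H^{m+3}$ regularity. The boundary layer correctors solve linear ODEs in $z$ whose coefficients tend to constants at infinity coming from a hyperbolic rest point of the leading profile; inverting this linearised operator along the stable manifold gives exponentially decaying $(N^i, U^i, \Phi^i)$, and the matching boundary conditions in \eqref{dirichlet} fix the remaining Dirichlet data at each step.

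Finally, plugging the truncated expansion into \eqref{EP} eliminates, by construction, all powers of $\eps$ up to $\eps^K$, and what remains defines $R_n, R_u, R_\phi$. The factor $\eps^{-\a_3}$ in \eqref{estirestes} is a direct consequence of each $\pa_3$ falling on a boundary layer profile producing one power of $\eps^{-1}$; the exponential decay of the profiles keeps the $L^2$ norms in $\R^3_+$ bounded uniformly. The principal obstacle, in my view, is not the higher-order inductive step (which is essentially algebraic plus standard linear well-posedness) but the nonlinear phase-plane analysis at order zero: verifying that the stable manifold of the origin meets every admissible boundary value of $\Phi^0$ globally, so that the theorem holds without any smallness assumption on $\phi_b - \phi^0|_{x_3=0}$, and checking hyperbolicity at infinity to secure the exponential decay that feeds the rest of the construction.
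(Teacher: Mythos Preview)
Your overall strategy coincides with the paper's: plug in the two-scale ansatz, separate slow and fast contributions at each power of $\eps$, analyse the leading boundary-layer ODE via its Hamiltonian structure and stable manifold, then solve linearised problems inductively. However, the specific profile equations you write are not the ones the hierarchy actually produces, and this is not a harmless slip: if the profiles do not solve the correct equations, the $\eps^{-1}$ terms in the momentum equation are not cancelled and the remainder bound \eqref{estirestes} fails. Concretely, the relation $N^0+\Gamma n^0=\Gamma n^0\, e^{-\Phi^0}$ that you state does not come out of the expansion; the $\eps^{-1}$ layer of the $u_3$-momentum equation reads $T^i\,\pa_z\ln(\Gamma n^0+N^0)=\pa_z\Phi^0$, hence $N^0+\Gamma n^0=\Gamma n^0\,e^{\Phi^0/T^i}$. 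Inserting this into the $O(1)$ Poisson equation in the layer, $\pa_z^2\Phi^0+\Gamma n^0 e^{-\Phi^0}=N^0+\Gamma n^0$, yields
\[
\pa_z^2\Phi^0+\Gamma n^0\bigl(e^{-\Phi^0}-e^{\Phi^0/T^i}\bigr)=0,
\]
with \emph{two} exponential contributions (Boltzmann electrons and ion pressure), not the single one you wrote. Your ODE would correspond to $N^0\equiv 0$, which is incompatible with your own algebraic relation unless $\Phi^0\equiv 0$. The phase-plane argument must be run on the correct potential $\mathcal{T}(\Phi)=-\gamma(e^{-\Phi}+T^i e^{\Phi/T^i})+\gamma(1+T^i)$; the paper then checks that the stable manifold of the saddle at the origin is the level set $\{H=0\}$, on which $p$ is a globally monotone function of $\Phi$ covering all of $\R$, which is exactly the unconditional existence you identify as the key point.

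Your description of the $i\ge 1$ step is also too compressed. Not all boundary-layer correctors solve linear ODEs in $z$: the tangential velocity layer $U^{i-1}_y$ satisfies a linear \emph{transport} equation in $(t,y)$ with $z$ as a parameter (and the paper takes $U^0_y=0$ as the trivial solution at the first step), while $U^i_3$ is obtained by integrating the continuity equation in $z$ from $0$. The decisive coupling is that requiring $U^i_3\to 0$ as $z\to+\infty$ forces a specific value of $u^i_3|_{x_3=0}$ (equation \eqref{BLu3i}), and \emph{this} is the boundary datum fed into the linear hyperbolic problem for $(n^i,u^i)$. The induction closes through this back-and-forth between layer decay and interior boundary conditions, not simply by reading off the Dirichlet data from \eqref{dirichlet}.
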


The rest of this section is devoted to the proof of this theorem.

\subsection{Collection of inner and outer equations}
First, we derive formally  the equations that the leading terms of the expansion \eqref{ansatz} should satisfy, to make it an approximate solution of \eqref{EP}-\eqref{BC}. The solvability of these equations will be discussed in the next subsection. 

\medskip
The starting point of the formal derivation is to plug the expansion \eqref{ansatz} into \eqref{EP}. By considering terms that have the same amplitude (that is the same power of $\eps$ in front of it),  one obtains a
collection of equations on the coefficients  $(n^i, u^i, \phi^i)$  and $(N^i, U^i, \Phi^i)$. As usual, we distinguish between two zones: 
\begin{enumerate}
\item An {\em outer zone}  (far from the boundary), in
 which the boundary layer correctors are neglectible. 
 \item An  {\em inner zone} (in the boundary layer),  in which we use the Taylor expansion
$$   n^i(t,y, x_3) \: = \:   n^i(t,y, \eps z) \: = \:  n^i(t,y, 0)  \: + \: \eps  \pa_3   n^i(t,y, 0) z \: + \: \dots \:  \mbox{¬¨‚Ä†(the same for $u^i$ and $\phi^i)$ }$$
 to obtain  boundary layer equations in variables $(t,y,z)$. For clarity of exposure, for any function $f=f(t,x)$, we will denote by    
 $\Gamma f$ the function  $(t,y,Z) \mapsto  f(t,y,0)$.
 
 \end{enumerate}
In the outer zone, by collecting terms that have $O(1)$ amplitude, one sees that $(n^0, u^0)$ satisfies \eqref{EP}, and that $n^0 =  e^{-\phi^0}$.  

\medskip
In the inner zone, by collecting terms that have $O(\eps^{-1})$ amplitude, one has the relations
\begin{equation*}
\left\{ 
\begin{aligned}
& \pa_z \left((\Gamma n^0+ N^0) \, (\Gamma u^0_3 + U^0_3)\right) \: =  \: 0 \\
 & (\Gamma u^0_3 + U^0_3) \,  \pa_z U^0_3 \: + \: T^i \pa_z \ln(\Gamma n^0+ N^0)  =  \pa_z \Phi^0 
\end{aligned}
\right.
\end{equation*}
The first line implies that $(\Gamma n^0+ N^0) (\Gamma u^0_3 + U^0_3)$ does not depend on $z$. From condition \eqref{dirichlet}, we deduce 
\begin{equation} \label{n0u0}
 (\Gamma n^0+ N^0) \, (\Gamma u^0_3 + U^0_3) = 0. 
\end{equation} 
Now, the second line also reads 
\begin{equation} \label{u0n0}
 (\Gamma n^0+ N^0) \, (\Gamma u^0_3 + U^0_3) \, \pa_z U^0_3 \: + \: T^i \pa_z  N^0   =  (\Gamma n^0+ N^0) \,  \pa_z \Phi^0. 
 \end{equation}
Combined with the previous equation, this gives
$$ T^i \pa_z  N^0   =  (\Gamma n^0+ N^0) \pa_z \Phi^0 $$
or equivalently (still using \eqref{decay})
\begin{equation} \label{N0}
 (N^0 + \Gamma n^0) =  \Gamma n^0 e^{\Phi^0/T^i}  
 \end{equation}
 Back to \eqref{u0n0}, we obtain that $\Gamma u^0_3 + U^0_3 = 0$, and by \eqref{decay}, both $U^0_3 = 0$ and $\Gamma u^0_3 = 0$. We recover as expected  that $(n^0, u^0)$ satisfies formally \eqref{E}, together with the non-penetration condition \eqref{BCE}, and  the relation $n^0 = e^{-\phi^0}$.  
 
\medskip 
Still in the inner zone, collecting the $O(1)$ terms in the third line of \eqref{EP}, we obtain 
$$ \pa^2_z \Phi^0 + e^{-\Gamma \phi^0} e^{-\Phi^0} \: = \: N^0 +  \Gamma n^0.  $$
 As $n^0 = e^{-\phi^0}$, this last equation can be simplified into 
 $$ \pa^2_z \Phi^0  + \Gamma n^0 e^{-\Phi^0}  = N^0 + \Gamma n^0$$
Together with \eqref{N0}, we end up with 
\begin{equation} \label{Phi0}
\pa^2_z \Phi^0 \: + \:  \Gamma n^0 \mathcal{S}(\Phi^0)  = 0, \quad  \mathcal{S}(\Phi) \: := \: e^{-\Phi} - e^{\Phi/T^i}. 
\end{equation}
 Note that this is a second order equation in $z$, so that with the boundary conditions 
 \begin{equation} \label{BCPhi0}
 \Phi^0\vert_{z=0} = \phi_b - \phi^0\vert_{x_3=0}, \quad  \Phi^0\vert_{z=+\infty} = 0  
 \end{equation}
 it should determine completely $\Phi^0$. Indeed,  the existence of a unique smooth solution of \eqref{Phi0}-\eqref{BCPhi0} with rapid decay at infinity will be established in the next subsection.  As $\Phi^0$ is determined, so is $N^0$ by relation \eqref{N0}.

 \medskip
 This concludes the formal derivation of  $(n^0,u^0,\phi^0)$, and $(U^0_3, \Phi^0, N^0)$.   
 We will now turn to the equations satisfied by  $(n^1, u^1, \phi^1)$ and $(U^1_3, U^0_y, \Phi^1, N^1)$. More generally,  we will  derive for all $i \ge 1$:
 \begin{enumerate}
\item   a set of equations on $(n^i, u^i, \phi^i)$, whose source terms depend on the coefficients  
 $(n^k,u^k,\phi^k)$ and their derivatives, $\: k \le i-1$. 
 \item  a set of equations on  $(U^i_3, U^{i-1}_y, \Phi^i, N^i)$, 
 whose sources depend on (the trace at $x_3= 0$ of) $(n^k,u^k,\phi^k)$ and their derivatives,  $\: k \le i$, as well as on  $(U^k_3, U^{k-1}_y, \Phi^k, N^k)$ and their derivatives, $\: k \le i-1.$ 
 \end{enumerate}
 Indeed, in the outer  zone, collecting terms with  amplitude  $O(\eps^i)$ in \eqref{EP} leads to systems of the type:
 \begin{equation} \label{outeri}
 \left\{
 \begin{aligned}
& \pa_t n^i + \div(n^0 u^i) + \div(n^i u^0) \: = \: f_n^i \\ 
&  \pa_t u^i + u^0 \cdot \na u^i  + u^i \cdot \na u^0  + T^i \na  \left( \frac{n^i}{n^0} \right)  = \na \phi^i   + f_u^i  \\
& - e^{-\phi^0} \, \phi^i = n^i  + f_{\phi}^i,
 \end{aligned}
 \right.
 \end{equation}
 where the source terms $f^i_{n,u,\phi}$ depend on  $(n^k,u^k,\phi^k)$ and their derivatives,  $\: k \le i-1$. 
  
 \medskip
 In the inner zone, collecting $O(\eps^i)$ terms in the Poisson equation of \eqref{EP}, we get:
 \begin{equation} \label{Phii} 
 \pa_z^2 \Phi^i  \: -  \:  \Gamma n^0 \, e^{-\Phi^0}  \, \left(  \Phi^i \: + \: \Gamma \phi^i \right) \: = \: \left(N^i + \Gamma n^i \right) \: + \: F_\Phi^i 
 \end{equation}
 where $F_{\Phi}^i$ depend  on    $(n^k,\phi^k)$ and on  $(N^k, \Phi^k)$, $\: k \le i-1$.  Then, collecting the $O(\eps^{i-1})$ in the equation for ion density $n$, taking into account that $U^0_3 = 0$ and  $\Gamma u^0_3 = 0$, we get 
\begin{equation} \label{Ui3}
 \pa_z \left((N^0 + \Gamma n^0)( U^i_3 + \Gamma u^i_3)\right) \: + \: \divy \left((N^0 + \Gamma n^0) (U^{i-1}_y + \Gamma u^{i-1}_y)  \right) \: = \: F^i_{N}.
 \end{equation}
 The equation on $u_y$ yields 
 \begin{equation} \label{U0y}
\begin{aligned} 
\hspace{-1cm} \pa_t (U^{0}_y + \Gamma u^0_y)  \:  & + \:  (U^1_3 + \Gamma u^1_3 + \Gamma \pa_3 u^0_3  ) \,  \pa_z U^0_y   \: \\
 & +  \: (U^0_y + \Gamma u^0_y) \cdot \na_y \left( U^0_y + \Gamma u^0_y \right)  \: + \:  (T^i+1) \na_y \ln(\Gamma n^0) \: = \: 0
 \end{aligned}
 \end{equation} 
 and for $i \ge 2$:
 \begin{equation} \label{Ui-1y}
\begin{aligned}
 \pa_t (U^{i-1}_y + \Gamma u^{i-1}_y)   \: & + \:  (U^i_3 + \Gamma u^i_3 + \Gamma \pa_3 u^{i-1}_3  ) \,  \pa_z U^0_y  \: +  \: (U^0_y + \Gamma u^0_y) \cdot \na_y \left( U^{i-1}_y + \Gamma u^{i-1}_y \right) \\ 
&  +  \: \left( U^{i-1}_y + \Gamma u^{i-1}_y \right)  \cdot \na_y  (U^0_y + \Gamma u^0_y) \: = \:  F^i_y.
 \end{aligned}
 \end{equation}
 Finally, the equation on $u_3$ yields 
\begin{equation} \label{Ni}
  T^i \pa_z \left( \frac{N^i + \Gamma n^i}{N^0 + \Gamma n ^0} \right) \: = \:  \pa_z \Phi^i \: + \:  F^i_3. 
\end{equation}
Again, the source terms   $\: F^i_N$, $F^i_y$ and $F^i_3$ depend on  $\: (n^k,u^k,\phi^k)$,  and on  $\: (U^k_3, U^{k-1}_y, \Phi^k, N^k)$,  for indices $k \le i-1$. Note that  \eqref{decay} requires the following compatibility conditions: 
\begin{equation} \label{compatible sources}
\left\{
\begin{aligned}
& -n^0\phi^i\vert_{x_3=0}  \: =  \: n^i\vert_{x_3=0} +  F^i_\Phi\vert_{z=\infty}, \\
&  \divy\left( n^0 \,  u^{i-1}_y \right)\vert_{x_3=0} \: = \:  F^i_N\vert_{z=\infty}, 
\\
& \left( \pa_t  u^{i-1}_y +  u^0_y \cdot \na_y  u^{i-1}_y +   u^{i-1}_y  \cdot \na_y u^0_y\right)\vert_{x_3=0} \: = \:  F^i_y\vert_{z=\infty}, \\
&  F^i_3\vert_{z=\infty} = 0. 
\end{aligned}
\right.
\end{equation}

\medskip
It remains to show how to deduce the profiles from the equations. First of all, we can derive $U^0_y$ from \eqref{Ui3} (with $i=1$) and \eqref{U0y}. We integrate \eqref{Ui3} from $0$ to $z$, and with 
 \eqref{dirichlet}  deduce 
\begin{equation} \label{Ui3bis}
(U^i_3 + \Gamma u^i_3) \: = \: -\frac{1}{N^0 + \Gamma n^0} \int_0^z \left(  \divy \left((N^0 + \Gamma n^0) (U^{i-1}_y + \Gamma u^{i-1}_y) \right) \: - \:  F^{i}_N \right)  
\end{equation}
For $i=1$, this last relation allows to express $U^1_3 + \Gamma u^1_3$ in terms of $U^{0}_y + \Gamma u^{0}_y$ (and coefficients of lower order). One can then substitute into \eqref{U0y} to have a closed equation on $U^0_y$. We notice that it has the trivial solution  $U^0_y = 0$. {\em In all what follows, we shall restrict to approximate solutions \eqref{ansatz} satisfying $U^0_y = 0$.} 

\medskip
As $U^0_y = 0$, the equation \eqref{Ui-1y} simplifies into
 \begin{equation} \label{Ui-1ybis}
 \pa_t U^{i-1}_y   \:  +  \:  \Gamma u^0_y \cdot \na_y U^{i-1}_y  
  +  \:  U^{i-1}_y  \cdot \na_y  \Gamma u^0_y \: = \:  H^i 
   \end{equation}
with 
$$H^i \ := \ F^i_y -  \left( \pa_t \Gamma u^{i-1}_y   \: + \:  \Gamma u^0_y \cdot \na_y \Gamma u^{i-1}_y \  +  \: \Gamma u^{i-1}_y  \cdot \na_y  \Gamma u^0_y \right).$$

\medskip
One can now derive recursively  $(n^i, u^i, \phi^i)$  and $(U^i_3, U^{i-1}_y, \Phi^i, N^i)$,  $i \ge 1$. 
We take $i \ge 1$, and  assume that all lower order profiles $(n^k,u^k,\phi^k)$ and $(U^k_3, U^{k-1}_y, \Phi^k, N^k)$, $k \le i-1$ are known. 
From  the equations satisfied by the $(n^k, u^k, \phi^k)$'s, the last three compatibility conditions in \eqref{compatible sources} are satisfied. In particular, the function $H^i$ in \eqref{Ui-1ybis}  decays to zero at infinity.  Thus, the linear transport equation \eqref{Ui-1ybis}  yields a  decaying solution $U^{i-1}_y$ (with the special case $U^0_y = 0$ when $i=1$). Then,  one obtains from \eqref{Ui3bis} the expression of  $U^i_3 + \Gamma u^i_3$. From the decay condition \eqref{decay}, we deduce that  
\begin{equation}  \label{BLu3i}
u^i_3\vert_{x_3 = 0} = \frac{-1}{n^0\vert_{x_3 = 0}} \int_0^{+\infty} \left( \divy \left((N^0 + \Gamma n^0) (U^{i-1}_y + \Gamma u^{i-1}_y)\right)  \: - \:  F^{i}_N \right) 
\end{equation}
This  boundary condition goes with  the hyperbolic system on $(n^i, u^i)$ (see \eqref{outeri}). Together with an  initial condition which is compatible with the boundary condition,  it allows to determine $(n^i, u^i, \phi^i)$. This will be explained rigorously in the next subsection.  
 Note that, as $(n^i, u^i, \phi^i )$ solves \eqref{outeri}, the remaining compatibility condition in \eqref{compatible sources} is satisfied.  Using again \eqref{Ui3bis}, one eventually gets $U^i_3$.

\medskip
To end up this formal derivation, it remains to handle $\Phi^i$, and $N^i$.  
  Combining \eqref{Phii} and \eqref{Ni} (together with the decay condition \eqref{decay}) leads to a second order equation on $\Phi^i$:
\begin{equation}  \label{Phiibis}
\pa^2_z \Phi^i \: + \: \Gamma n^0 \mathcal{S}'(\Phi^0)  \: = \:  G_{\Phi} 
\end{equation}
where we remind that $\mathcal{S}(\Phi) = e^{-\Phi} - e^{\Phi/T^i}$ and $G_\Phi$ is a source term that depends on the lower order terms, decaying to zero as $z$ goes to infinity. Again, the solvability of this system with Dirichlet conditions
\begin{equation} \label{BCPhiibis}
 \Phi^i\vert_{z=0} = -\phi^i\vert_{x_3=0}, \quad \Phi^i\vert_{z=+\infty} = 0 
 \end{equation}
 will be established later on. Once $\Phi^i$ is known, $N^i$ is determined through \eqref{Ni}.

\subsection{Well-posedness of the reduced models}
To complete the construction of the approximate solutions and get Theorem \ref{deriv}, we must establish the well-posedness of the inner and outer systems derived in the previous paragraph. 

\medskip
The well-posedness of the inner systems is classical. 
Indeed, the system \eqref{E}-\eqref{BCE} on $(n^0, u^0)$ is a standard compressible Euler system. For well-posedness, we consider an  initial data of the form  $(n^0_0 = e^{-\phi^0_0}, u^0_0)$, with $(u^0_0, n^0_0- n^{ref}) \in H^{m+3+2K}(\R^3_+)^4$. 
Following \cite{Gues,SCH}, if this initial data  satisfies  standard compatibility conditions at the boundary $\{x_3 = 0\}$, there exists a unique  solution $(n^0 = e^{-\phi^0}, u^0)$, with  
$$(n^0- n^{ref},u^0) \in C^\infty\left([0,T]; H^{m+3+2K}(\R^3_+)^4\right), \quad \mbox{¬†for some } T > 0.$$ 
As regards the  next systems $\eqref{outeri}$-\eqref{BLu3i}, they resume to linear hyperbolic systems on $(n^i, u^i)$, $i\ge 1$, with a source made of some derivatives of previously constructed $(n^j, u^j, \phi^j)$, $j< i$. For initial data 
$(n^i_0, u^i_0)  \in H^{m+3+2K- 2i}(\R^3_+)^4$, chosen in order to match compatibility conditions  they  have again unique solutions 
$$ (n^i,u^i) \in C^\infty\left([0,T]; H^{m+3+2K-2i}(\R^3_+)^4\right). $$ 
The ``loss'' of regularity is due to the derivatives in the source.  Likewise, one shows that:
$$ U^i \in C^\infty\left([0,T]; H^{m+3+2K-2i}(\R^3_+)^4\right),$$ 

\medskip
The main point is the resolution of the nonlinear boundary layer system \eqref{Phi0}-\eqref{BCPhi0}. Note that $t$ and $y$ are only  parameters in such a system, involved through the  coefficient $\gamma n^0$ and the boundary data. For each $(t,y)$, \eqref{Phi0} is an ordinary differential equation in $z$, of the type 
\begin{equation} \label{ode}
 \Phi'' + \gamma \mathcal{S}(\Phi) = 0, \quad \gamma > 0,
 \end{equation}
and  we want to prove that, for any constant $\phi$,  it has   a (unique) solution $\Phi$  that connects $\phi$  to $0$, with  exponential decay of $\Phi$ and its derivatives as $z$ goes to infinity. 

 Therefore, we rewrite \eqref{ode}  as a Hamiltonian system, 
\begin{equation} \label{hamilton}
\frac{d}{dz} \begin{pmatrix} p \\ \Phi \end{pmatrix} = \na^\bot H(p,\Phi), \quad p := \: \Phi', \quad H(p,\Phi) \: := \: \frac{p^2}{2} \: + \:  \mathcal{T}(\Phi) 
\end{equation}
where $\mathcal{T}$ is an antiderivative for $\gamma \mathcal{S}$. We choose the one that vanishes at $0$: 
$$   \mathcal{T}(\Phi) \: := -\gamma \left( e^{-\Phi} + T^i e^{\Phi/T^i}\right) + \gamma (1+T^i).  $$
The Hamiltonian $H$ is of course constant along any trajectory.  
The linearization of \eqref{hamilton}  at the critical point $(p,\Phi)=(0,0)$ yields 
$$  \frac{d}{dz} \begin{pmatrix} \dot{p} \\ \dot{\Phi} \end{pmatrix} \: = \begin{pmatrix} 0 & \gamma (1+1/T^i) \\ 1 & 0 \end{pmatrix} \begin{pmatrix} \dot{p} \\ \dot{\Phi} \end{pmatrix}. $$
Hence, we get that  $(0,0)$ is a saddle fixed point, and using the stable manifold theorem that its  stable manifold is locally a curve,  tangent to $\left(\begin{smallmatrix} \sqrt{\gamma (1+1/T^i) } \\ -1 \end{smallmatrix} \right)$. Moreover, as $H(0,0) = 0$,  the stable manifold is exactly  the branch of 
$ \{¬†H(p,\Phi) = 0 \} $ given by the equations
$$  p =  -   \sqrt{\gamma} \sqrt{e^{-\Phi} + T^i e^{\Phi/T^i} - 1 - T^i} \: \mbox{ for  } \: \Phi \ge 0, \quad p =     \sqrt{\gamma} \sqrt{e^{-\Phi} + T^i e^{\Phi/T^i} - 1 - T^i} \: \mbox{ for  } \: \Phi < 0. $$
 In addition, any solution starting on this branch  decays exponentially to $0$, and by using the equation \eqref{ode}, so do all its derivatives.  
Finally, one needs to check  that for any $\psi$,  this branch has a unique point with  ordinate $\psi$. It is indeed the case,  as this branch gives $p$ as a decreasing function of $\Phi$, going to infinity at infinity. This concludes the resolution of the o.d.e, and yields in turn the solution $\Phi^0$ of our boundary layer system. Its  regularity  with respect to $(t,y)$ follows from regular dependence of the solution of \eqref{ode} with respect to the data.

\medskip
Last step is the well-posedness of the boundary layer systems \eqref{Phiibis}-\eqref{BCPhiibis} which allow to define $\Phi^i$ and $N^i$.  Again, $t,y$ are parameters, and the point is to find a (unique) solution to  the inhomogeneous linear ODE
\begin{equation} \label{ode2}
 \Phi'' + \gamma S'(\overline{\Phi}) \Phi = F, 
 \end{equation}
with exponential decay at infinity  and  a Dirichlet condition $\Phi\vert_{z=0} = \psi$. Here,  $\overline{\Phi}$ and $F$ are arbitrary smooth functions of $z$, decaying exponentially to $0$. 

Note that, up to consider $\tilde \Phi(z) = \Phi(z) - \psi \chi(z) $, with $\chi \in C^\infty_c(\R_+)$ satisfying $\chi(0)=0$, we can always assume that $\psi = 0$.  After this simplification, observing that $S'(\overline{\Phi})\leq 0$, we can apply the Lax-Milgram Lemma: it provides a unique solution $\Phi \in H^1_0(\R_+)$ of \eqref{ode2}. By Sobolev embedding, it decays to zero at infinity, and it is smooth by elliptic regularity.  It remains to obtain the exponential decay of $\Psi$, hence we write \eqref{ode2} as
$$ \Phi'' +    \gamma S'(0) \Phi = G, \quad G :=      (\gamma S'(0) -  \gamma S'(\overline{\Phi})) \Phi + F $$
Using the Duhamel formula for this constant coefficient ode, and the exponential decay of $G$, one obtains easily that any bounded solution decays  exponentially. 

Finally one deduces recursively that $U^i$ also decays exponentially.

\medskip

Summing up what has been done in this section, we have proved Theorem \ref{deriv}.

\section{Stability estimates}
\label{stability}
This section is devoted to the stability of the boundary layer approximations built in the previous section. 

Let us write the solution $(n^\eps, u^\eps, \phi^\eps)$ of \eqref{EP} under the form 
\begin{equation*}
n^\eps  \: = \:  n_a \: +  n , \quad u^\eps  \: = \: u_a  \: +  u , \quad \phi^\eps  \: = \:  \phi_a \: + \phi
\end{equation*}
where $n_a, u_a, \phi_a$ are shorthands for $n^\eps_{app}, u^\eps_{app}, \phi^\eps_{app}$ (defined in \eqref{ansatz}). 
Then, we get for $(n, u, \phi)$ the system
\begin{equation} \label{EPP}
\left\{
\begin{aligned}
& \pa_t  n +  (u_a + u ) \cdot \nabla n + n \,\div (u + u_{a})    + \div  (n_{a} u)  = \eps^{K} R_n,  \\
& \pa_t u + (u_a + u)  \cdot \na u + u \cdot \nabla u_{a}  + T^i   \left(\frac{\na n}{n_a +  n } - \frac{\na n_a}{n_a} \left( \frac{n}{n_a +  n} \right) \right) =  \na \phi + \eps^{K}R_u, \\
& \eps^2 \Delta \phi = n - e^{-\phi_a}( e^{ - \phi} - 1 \big) + \eps^{K+1} R_\phi.
\end{aligned}
\right.
\end{equation}
together with the boundary conditions
\begin{equation} \label{EPPbc1}
u_3\vert_{x_3 = 0} = 0, \quad \phi\vert_{x_3=0} = 0
\end{equation}
and the initial condition
\begin{equation} \label{EPPbc2}
u\vert_{t = 0} = \eps^{K+ 1} u_{0}, \quad n\vert_{t=0} = \eps^{K+ 1} n_{0}.
\end{equation}
Observe here that in \eqref{EPP}, $\eps^{K} R_n,  \eps^{K}R_u$ and $\eps^{K+1} R_\phi$ are remainders that appear because $(n_a,u_a,\phi_a)$ is not an exact solution of \eqref{EP}.

\bigskip

The main result of this section is 
\begin{theorem}
\label{main}
Let $m \geq 3$, and $(n_{0}, u_{0})\in H^m(\R^3_{+})$ some initial data for \eqref{EPPbc2}, satisfying some suitable compatibility  conditions. 
Let $K \in \N^*, K \geq m$ and $(n_{a}, u_{a}, \phi_{a})$
 an  approximate solution at order $K$ given by  Theorem \ref{deriv} which is defined on $[0, T_{0}]$. There exists
 $\eps_0$ such that for every $\eps \in(0, \eps_{0}]$, the solution of  \eqref{EPP}-\eqref{EPPbc1}-\eqref{EPPbc2} is defined  on $[0,T_{0}]$
  and satisfies the estimate
  $$ \eps^{|\alpha|} \| \partial^\alpha(n,u, \phi, \eps\nabla  \phi) \|_{L^2(\mathbb{R}^3_{+})} \leq C \eps^{K}, \quad \forall t \in [0, T^0], \quad \forall \alpha
   \in \mathbb{N}^3, \, |\alpha| \leq m.$$ 
  
\end{theorem}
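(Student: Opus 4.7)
The plan is to combine a modulated $L^2$ energy estimate, tailored to the physical structure of Euler-Poisson, with a conormal regularity argument to handle higher derivatives near the characteristic boundary $\{x_3=0\}$; the whole argument will close via a continuation bootstrap. Classical hyperbolic theory (\cite{SCH,Gues}) furnishes, for every fixed $\eps>0$, a local-in-time smooth solution of \eqref{EP}-\eqref{BC}, hence of the perturbation system \eqref{EPP}-\eqref{EPPbc1}-\eqref{EPPbc2}. Writing
$$ \mathcal{N}_m(t) := \sum_{|\alpha|\leq m} \eps^{|\alpha|} \|\pa^\alpha(n,u,\phi,\eps\na \phi)(t)\|_{L^2(\R^3_+)}, $$
I would let $T^*\le T_0$ be maximal with a bootstrap assumption $\mathcal{N}_m(t)\le M\eps^K$ in force on $[0,T^*)$, and aim to improve it strictly for $\eps$ small and $M$ large; this allows continuation up to $T_0$.

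The core of the argument is the $L^2$ estimate, since $\na n_a$ and $\na \phi_a$ are of size $1/\eps$ inside the boundary layer (while $u_a$ is regular, thanks to $U^0_y=U^0_3=0$). A naive Gronwall estimate on $\|n\|_{L^2}+\|u\|_{L^2}$ would thus yield a ruinous factor $e^{C/\eps}$. Following the modulated-energy strategy of \cite{Grenier,Chiron-Rousset}, I would construct a relative-entropy-type functional
$$ \mathcal{E}_0(t) := \int_{\R^3_+} \left( T^i\frac{n^2}{n_a} + (n_a+n)|u|^2 + n_a\,\mathcal{Q}(\phi) + \eps^2|\na \phi|^2 \right)dx, $$
where $\mathcal{Q}(\phi)$ denotes the quadratic Taylor remainder of $e^{-\phi^\eps}$ at $\phi_a$. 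Differentiating $\mathcal{E}_0$ along \eqref{EPP}, the singular $O(1/\eps)$ contribution from $\na n_a$ in the continuity equation should be cancelled \emph{exactly} by the symmetric contribution from $T^i(\na n_a/n_a)\cdot n/(n_a+n)$ in the momentum equation, while the large electric force $\na \phi$ is converted into a coercive expression in $\phi$ and $\eps\na \phi$ by using the Poisson equation to substitute $n-e^{-\phi_a}(e^{-\phi}-1)$ for $\eps^2\Delta \phi$. I expect to reach $\frac{d}{dt}\mathcal{E}_0 \le C\,\mathcal{E}_0 + C\eps^{2K}$ with $C$ independent of $\eps$, so Gronwall yields $\mathcal{E}_0 \lesssim \eps^{2K}$ on $[0,T^*]$.

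For higher derivatives, I would use the conormal vector fields $Z_0=\pa_t$, $Z_1=\pa_{x_1}$, $Z_2=\pa_{x_2}$, $Z_3=\chi(x_3)\pa_{x_3}$ (with $\chi\in C^\infty_c(\R_+)$, $\chi(0)=0$), apply $\eps^{|\alpha|} Z^\alpha$ to \eqref{EPP}, and repeat the modulated-energy estimate at each conormal level: the commutators with boundary-layer coefficients lose a factor $\eps^{-|\alpha|}$, which is exactly absorbed by the $\eps^{|\alpha|}$ weight. Genuine normal derivatives are then recovered algebraically from the equations themselves: the Poisson equation provides $\eps^2\pa_3^2\phi$ and hence $\eps\pa_3\phi$, the third component of the momentum equation provides $\pa_3 n$ (in terms of $\pa_3 \phi$ and time derivatives), and the continuity equation provides $\pa_3 u_3$. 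The single problematic quantity is the normal derivative $\pa_3 u_y$ of the tangential velocity, which is not directly available because $\{x_3=0\}$ is characteristic; I would pass to the vorticity equation, whose linearization no longer contains the singular factor $\na n_a/n_a$ (the pressure is a gradient) and reduces to a transport equation with $L^\infty$ coefficients, yielding $\pa_3 u_y$ by a standard Gronwall estimate.

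Combining these estimates produces $\mathcal{N}_m(t)^2 \le C(1+M^{2}\eps^{2K})\eps^{2K}$ on $[0,T^*]$; choosing first $M$ large, then $\eps_0$ small, improves the bootstrap to $\mathcal{N}_m\le (M/2)\eps^K$, and the continuation argument propagates the solution up to $T_0$ with the claimed bound. The main obstacle I anticipate is precisely the engineering of $\mathcal{E}_0$ so as to achieve the \emph{exact} cancellation of the $O(1/\eps)$ boundary-layer contributions; merely bounding them crudely would reintroduce the exponential-in-$1/\eps$ blow-up that the modulated energy is designed to avoid. This cancellation relies on the symmetric-hyperbolic structure of the Euler part and on the favorable sign brought by the Boltzmann relation $e^{-\phi}$ in the Poisson equation, reflecting the stabilizing role of the quasineutrality constraint.
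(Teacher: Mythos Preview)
Your proposal is correct and follows essentially the same route as the paper: a modulated $L^2$ energy adapted to the physical energy of Euler--Poisson to neutralize the $O(1/\eps)$ boundary-layer coefficients, conormal vector fields for tangential regularity, algebraic recovery of normal derivatives from the equations together with the vorticity equation for $\pa_3 u_y$, all closed by a bootstrap continuation. The only differences are cosmetic: the paper takes $T^i\,\dot n^{2}/(2(n_a+n))$ rather than your $T^i n^2/n_a$ in the energy (trading your exact cancellation for a harmless residual of size $\eps^{-1}\|n\|_{L^\infty}$), and it makes explicit the Hardy-type observation $|u_3|\le x_3\|\pa_3 u\|_{L^\infty}$ to show that $u\cdot\na n_a$ and $(u_a+u)\cdot\na\phi_a$ are uniformly bounded despite the boundary layer.
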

One can then remark that as a simple rephrase of  Theorem \ref{main}, we obtain:

\begin{corollary}
\label{coro}
Let $m\geq 3$. Let $K \in \N^*, K \geq m$ and $(n_{a}, u_{a}, \phi_{a})$
 an approximate solution at order $K$ given by  Theorem \ref{deriv} which is defined on $[0, T_{0}]$. There exists
 $\eps_0$ such that for every $\eps \in(0, \eps_{0}]$, there is a solution $(n^\e,u^\e,\phi^\e)$ to  \eqref{EP} which  is defined  on $[0,T_{0}]$
  and satisfies the estimate:
  \begin{equation}
   \left\| \Big(n^\eps-n_{a},u^\eps-u_{a}, \phi^\eps-\phi_{a} \Big)\right\|_{H^{m}(\mathbb{R}^3_{+})} \leq C \eps^{K-m}, \quad \forall t \in [0, T^0].
   \end{equation}
  In particular, we get the $L^2$ and $L^\infty$ convergences as $\eps \rightarrow 0$:
 \begin{equation}
  \begin{aligned}
&\sup_{[0,T_0]}\left(  \left\| n^\eps- n^0 - N^0\left(\cdot,\cdot,\frac{\cdot}{\eps}\right)\right\|_{L^\infty(\R^3_+)}  +   \| n^\eps- n^0 \|_{L^2(\R^3_+)}\right) \rightarrow 0, \\
&\sup_{[0,T_0]}  \left(    \| u^\eps- u^0 \|_{L^\infty(\R^3_+)}  +   \| u^\eps- u^0 \|_{L^2(\R^3_+)}\right) \rightarrow 0, \\
& \sup_{[0,T_0]}  \left(    \left\| \phi^\eps- \phi^0 - \Phi^0\left(\cdot,\cdot,\frac{\cdot}{\eps}\right) \right\|_{L^\infty(\R^3_+)}   +   \| \phi^\eps- \phi^0 \|_{L^2(\R^3_+)}\right) \rightarrow 0.
  \end{aligned}
  \end{equation}
\end{corollary}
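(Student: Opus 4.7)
The plan is to obtain the Corollary as a bookkeeping consequence of Theorem \ref{main} combined with the explicit structure of the approximate solution $(n_{a}, u_{a}, \phi_{a})$ produced by Theorem \ref{deriv}. The key scaling remark is that for a profile $f(t,y,z)$ exponentially decaying in $z$, one has $\|f(t,\cdot,\cdot/\eps)\|_{L^2(\R^3_+)} = O(\sqrt{\eps})$ but $\|f(t,\cdot,\cdot/\eps)\|_{L^\infty(\R^3_+)} = O(1)$, which dictates exactly which boundary layer profiles have to be subtracted in each norm.

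\textbf{Step 1: the $H^m$ bound.} For any multi-index $\a$ with $|\a|\leq m$, Theorem \ref{main} gives
$$ \|\pa^\a (n^\eps - n_a, u^\eps - u_a, \phi^\eps - \phi_a)\|_{L^2(\R^3_+)} \leq C \eps^{K-|\a|} \leq C \eps^{K-m},$$
using $K\geq m$ and $\eps \leq \eps_0$. Summing over $|\a|\leq m$ yields the claimed $H^m$ estimate, and existence of $(n^\eps,u^\eps,\phi^\eps)$ on $[0,T_0]$ is already part of Theorem \ref{main}.

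\textbf{Step 2: the $L^2$ convergences.} Write $n^\eps - n^0 = (n^\eps - n_a) + (n_a - n^0)$. The first term is $O(\eps^K)$ in $L^2$ by the $\a = 0$ case of Theorem \ref{main}. From the ansatz \eqref{ansatz},
$$ n_a - n^0 = \sum_{i=1}^K \eps^i n^i(t,x) + \sum_{i=0}^K \eps^i N^i\!\left(t,y,\tfrac{x_3}{\eps}\right).$$
The macroscopic tail is $O(\eps)$ in $L^2$ since each $n^i$ belongs to $C^0([0,T_0], H^m(\R^3_+))$. For the boundary layer tail, the change of variable $z = x_3/\eps$ gives $\|N^i(t,\cdot,\cdot/\eps)\|_{L^2(\R^3_+)} = \sqrt{\eps}\,\|N^i(t,\cdot,\cdot)\|_{L^2(\R^2\times\R_+)}$, uniformly bounded in $t$ by item iii) of Theorem \ref{deriv}. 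Hence this tail is $O(\sqrt{\eps})$, and $\|n^\eps - n^0\|_{L^2} = O(\sqrt{\eps}) \to 0$. The arguments for $\|u^\eps - u^0\|_{L^2}$ and $\|\phi^\eps - \phi^0\|_{L^2}$ are identical, with $U^0 \equiv 0$ making the boundary layer contribution to $u$ even smaller.

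\textbf{Step 3: the $L^\infty$ convergences.} Here I would invoke the Sobolev embedding $H^2(\R^3_+) \hookrightarrow L^\infty(\R^3_+)$. From Theorem \ref{main} and $K\geq m \geq 3$,
$$ \|n^\eps - n_a\|_{L^\infty} \leq C \|n^\eps - n_a\|_{H^2} \leq C \eps^{K-2} \to 0,$$
and likewise for $u^\eps - u_a$ and $\phi^\eps - \phi_a$. It remains to treat the deterministic pieces:
$$ n_a - n^0 - N^0\!\left(\cdot,\cdot,\tfrac{\cdot}{\eps}\right) = \sum_{i=1}^K \eps^i n^i + \sum_{i=1}^K \eps^i N^i\!\left(\cdot,\cdot,\tfrac{\cdot}{\eps}\right),$$
which is $O(\eps)$ in $L^\infty$ since the $n^i$ are bounded (by Sobolev embedding applied to their $H^m$ bound) and the $N^i$ are uniformly bounded by item iii) of Theorem \ref{deriv}. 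For $u^\eps - u^0$ no subtraction is needed because $U^0 \equiv 0$ was imposed in Section \ref{construction}, so $u_a - u^0$ starts at $\eps U^1 + \eps u^1$ and is $O(\eps)$ in $L^\infty$. The term involving $\phi$ is handled just as that involving $n$.

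The proof contains no genuine obstacle; the only point to watch carefully is the $\eps$-bookkeeping reconciling the weighted estimate of Theorem \ref{main} with the different $\eps$-scalings of boundary layer profiles in $L^2$ (where they are already small and need not be subtracted) and in $L^\infty$ (where the $O(1)$ profile $N^0$, resp.\ $\Phi^0$, must be subtracted).
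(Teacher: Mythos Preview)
Your proof is correct and matches the paper's intent: the paper states only that the Corollary is ``a simple rephrase of Theorem \ref{main}'' and gives no further argument, so your Step 1--Step 3 bookkeeping (reading off the unweighted $H^m$ bound from the weighted estimate, then splitting $n^\eps-n^0$ into $(n^\eps-n_a)+(n_a-n^0)$ and using the $\sqrt\eps$ versus $O(1)$ scaling of boundary layer profiles in $L^2$ versus $L^\infty$, together with $U^0\equiv 0$) is exactly the intended fill-in.
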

Of course, this contains Theorem \ref{theomain}.

\bigskip

From now on, our goal is to prove Theorem \ref{main}.

For $\eps >0$ fixed, since  in the equation \eqref{EPP}, the  term involving $\phi$ in the second equation can be considered
as a semi-linear term,  the  known local existence results for the compressible Euler equation (\cite{SCH,Gues} for example) can be applied to 
 the system \eqref{EPP}. 
  Let us assume that  $(n_{0}, u_{0}) \in H^m(\R^3_{+})$ for $m \geq 3$ and that it satisfies suitable compatibility conditions on the boundary, 
   then there exists $T^\eps >0$ and  a unique solution of \eqref{EPP} defined on $[0,T^\eps]$ such that
     $ u \in \mathcal{C}([0, T^\eps), H^m)$ and that there exists $M>0$ satisfying 
   \begin{equation}
   \label{apriori2} n_{a}(t,x) + n(t,x) \geq  1/M , \quad  e^{- \phi_{a}}(1+\min(h_0,h_1)) \geq 1/M, \quad    \| \chi( u_{a}+ u)_{3}  \|_{L^\infty} \leq \sqrt{3/4 \, T^i}, 
 \quad \forall t \in [0,T^\eps)
\end{equation}
where 
\begin{equation} \label{h0h1}
h_0(\phi) \: := \:  -{ e^{-\phi} - 1 + \phi \over \phi}, \quad h_1(\phi) \: := \:  e^{-\phi}- 1  
\end{equation}
and $\chi(x_3):= \tilde\chi(x_3/\delta)$ where $\tilde\chi$ is  a  smooth compactly supported function, equal to $1$ in the vicinity of  zero and  $\delta>0$ is chosen so that
$$ \| \tilde\chi(x_{3}/\delta)\big(( u_{a}+ u)_{3}\big)\vert_{t=0} \|_{L^\infty} \leq \sqrt{T^i/2}.$$ Note that  we can always chose $\delta$ in this 
way since  at $t=0$ we have $ \big(( u_{a}+ u)_{3}\big)\vert_{t=0, x_{3}=0}=0$.

   The difficulty is thus to prove that  the solution actually exists on an interval of time independent of $\eps$.
    We shall  get this result  by proving  uniform energy estimates combined with the previous local existence result when  the initial data
     and the source term are sufficiently small (i.e. when the approximate solution $ (n_{a}, u_{a})$ is sufficiently accurate).
     Note that there are two difficulties in order to get useful energy estimates. The first one is the singular perturbation coming from
      the Poisson  part, the electric field cannot be considered as a lower order term uniformly in $\eps$. The second
       one comes from the boundary layer terms contained in the approximate solution which create singular terms, for example, 
        in the second line of \eqref{EPP},  the zero order term $ \frac{\na n_a}{n_a} \, \frac{n}{n_a +  n}$
         is singular in the sense that when \eqref{apriori2} is matched,  we only have  the  estimate
         $$  \left\|\frac{\na n_a}{n_a} \, \frac{n}{n_a +  n} \right\|_{L^2} \lesssim {1 \over \eps} \|n\|_{L^2}.$$
    The main step will be the obtention of uniform estimates for  ``linearized'' systems. 
\subsection{Energy for the quasineutral Euler-Poisson system without source}
We start by recalling that the isothermal Euler-Poisson system without source (which corresponds here to the case $\phi_b=0$) has a conserved physical energy, which is given in the
\begin{proposition}
Let $\eps> 0$ and $(n, u,\phi)$ a strong solution to \eqref{EP} on $[0,T]$ with  $\phi_b=0$. We define the energy functional: 
\begin{equation}\label{energy}
\begin{aligned}
\mathcal{E_\eps}(t) \: & := \: \frac{1}{2} \int_{\R^3_+} n |u|^2 dx + T^i \int_{\R^3_+} n(\log n  - 1) dx  + \int_{\R^3_+} (1-\phi)e^{-\phi} dx  + \frac{\eps^2}{2} \int_{\R^3_+} \vert \nabla_x \phi \vert^2 dx.
\end{aligned}
\end{equation}
Then for any $t \in [0,T], \,
\mathcal{E_\eps}(t) = \mathcal{E_\eps}(0)$.
\end{proposition}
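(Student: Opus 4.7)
The plan is to differentiate $\mathcal{E}_\eps$ in time and show that each of the four contributions cancels against the others, using the three equations of \eqref{EP} together with the boundary conditions $u_3\vert_{x_3=0}=0$ and $\phi\vert_{x_3=0}=\phi_b=0$ (the latter forcing $\partial_t\phi\vert_{x_3=0}=0$ because $\phi_b$ is time-independent).

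First I would handle the mechanical part. Inserting $\partial_t n=-\operatorname{div}(nu)$ and the momentum equation into $\frac{d}{dt}\int \tfrac{1}{2}n|u|^2\,dx$ and using the identity $\operatorname{div}\!\bigl(\tfrac{1}{2}nu|u|^2\bigr)=\tfrac{|u|^2}{2}\operatorname{div}(nu)+nu\cdot(u\cdot\nabla u)$, one gets
\begin{equation*}
\frac{d}{dt}\int\tfrac{1}{2}n|u|^2\,dx = -T^i\int u\cdot\nabla n\,dx + \int nu\cdot\nabla\phi\,dx,
\end{equation*}
the divergence boundary term vanishing by the non-penetration condition. Differentiating the ion entropy $T^i\int n(\log n-1)\,dx$, reinserting continuity, and integrating by parts (again with $u_3\vert_\partial=0$ killing the boundary contribution) produces $+T^i\int u\cdot\nabla n\,dx$. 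Hence the two mechanical pieces combine into $\int nu\cdot\nabla\phi\,dx$.

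Next I would exploit both boundary conditions to rewrite
\begin{equation*}
\int nu\cdot\nabla\phi\,dx = -\int \phi\,\operatorname{div}(nu)\,dx = \int \phi\,\partial_t n\,dx,
\end{equation*}
and then substitute the time-differentiated Poisson equation, $\partial_t n=-e^{-\phi}\partial_t\phi+\eps^2\Delta\partial_t\phi$. A further integration by parts (using $\phi\vert_\partial=0$) converts the $\eps^2$-piece into $-\eps^2\int\nabla\phi\cdot\nabla\partial_t\phi=-\frac{d}{dt}\bigl(\tfrac{\eps^2}{2}\int|\nabla\phi|^2\bigr)$, which exactly offsets the Dirichlet contribution in $\frac{d}{dt}\mathcal{E}_\eps$. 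The leftover scalar contribution involving $\partial_t\phi$ weighted by $-\phi e^{-\phi}$ is then to be matched against $\frac{d}{dt}\int(1-\phi)e^{-\phi}\,dx$ via a pointwise identity for the derivative of the integrand (and, if necessary, by invoking the decay of $\phi$ at infinity together with $\phi\vert_\partial=0$ to discard the residual total-mass flux).

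The main obstacle is purely algebraic and of a bookkeeping nature: every integration by parts generates a boundary contribution that has to be discarded by invoking either $u_3\vert_\partial=0$ or $\phi\vert_\partial=0$, and the coefficient of each $\partial_t\phi$ term must be tracked carefully so that the derivative of the electron-potential density precisely closes the computation. With these checks in place, all four contributions cancel and $\mathcal{E}_\eps$ is conserved.
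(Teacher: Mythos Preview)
Your proposal is correct and follows essentially the same route as the paper: differentiate the energy, use the continuity and momentum equations together with the boundary conditions $u_3\vert_{x_3=0}=0$ and $\phi\vert_{x_3=0}=0$ to integrate by parts, then substitute the time-differentiated Poisson equation to close the identity. The only cosmetic difference is that the paper starts from $\frac{d}{dt}\int \tfrac12 n|u|^2$ and absorbs the pressure, electric, and Poisson contributions one by one, whereas you differentiate each of the four pieces separately and cancel them pairwise; the underlying manipulations are identical.
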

This property will never be used in the sequel; nevertheless, the $L^2$ and higher order stability estimates which follow are obtained via the study a \emph{modulated} version of this energy. Thus, for clarity of exposure, we briefly present the proof showing that the energy is conserved, as it is much easier to follow but share the same spirit with the subsequent ones.

\begin{proof}
We compute the derivative in time of the first term of $\mathcal{E}$, by using the transport equation satisfied by $n$ in \eqref{EP} (which corresponds to the convervation of charge):
\begin{align*}
\frac{d}{dt}  \int_{\R^3_+} n |u|^2 dx & \: = \: \int_{\R^3_+}\pa_t n |u|^2 dx + 2 \int_{\R^3_+} n u \cdot \pa_t u dx \\
& \: = \: -  \int_{\R^3_+} \nabla \cdot (n u) |u|^2 dx + 2 \int_{\R^3_+} n u \cdot \pa_t u dx.
\end{align*}
Using the non-penetration boundary condition for $u$, we have by integration by parts:
$$
 -  \int_{\R^3_+} \nabla\cdot (n u) |u|^2 dx =    \int_{\R^3_+}  (n u) \cdot \nabla |u|^2 dx =  \int_{\R^3_+}  (n u) u \cdot \nabla u dx.
 $$
Now thanks to the equation satisfied by $u$ in \eqref{EP}, we can write: 
$$
 \int_{\R^3_+} n u\cdot \pa_t u dx = - \int_{\R^3_+}  (n u) u \cdot \nabla u dx - T^i \int_{\R^3_+} n u \cdot \nabla \log n + \int_{\R^3_+} n u \cdot \nabla \phi dx.
$$
By the equation satisfied by $n$ and the non-penetration condition, we infer, by another integration by parts, that
$$
T^i \int_{\R^3_+} n u \cdot \nabla \log n = -T^i \int_{\R^3_+} \pa_t n \log n = -\frac{d}{dt} T^i \int_{\R^3_+} n(\log n  - 1) dx.
$$
Likewise, we get, using once more the conservation of charge,
\begin{align*}
\int_{\R^3_+} n u \cdot \nabla \phi dx & \: = \:  - \int_{\R^3_+} \na \cdot (n u) \phi dx  \\
& \: = \: \int_{\R^3_+} \pa_t n \, \phi dx.
\end{align*}
Deriving with respect to time the Poisson equation in \eqref{EP} we obtain
\begin{align*}
\int_{\R^3_+} n u \cdot \nabla \phi dx & \: = \: \int_{\R^3_+} \eps^2 \pa_t \Delta \phi \phi dx +  \int_{\R^3_+}  \pa_t e^{-\phi} \phi dx.
\end{align*}
The last term of the r.h.s. is treated exactly like the pressure term. Considering the first one, by integration by parts, we obtain:
$$
\int_{\R^3_+}  \pa_t \Delta \phi \, \phi dx = -\frac{1}{2} \frac{d}{dt} \int_{\R^3_+} \vert \nabla_x \phi \vert^2 dx,
$$
which relies on the fact that $\phi=0$ on $\{x_3=0\}$. This proves our claim.
\end{proof}
\subsection{$L^2$ estimate for  the suitably linearized equations}
We establish here an $L^2$ estimate for the solution $(\np,\up,\fp)$ of the following  linearized system: 
\begin{equation} \label{linear}
\left\{
\begin{aligned}
& \pa_t  \np +  (u_{a} + u )\nabla \np + (n_a + n) \nabla \cdot \up+ \up \cdot \nabla (n_{a}+   n)+ \np \,\div (u_{a}+u)   = r_n,  \\
& \pa_t \up + (u_a + u)  \cdot \na \up  + \up \cdot \nabla u_{a} +  T^i   \left(\frac{\na \np}{n_a + n} - \frac{\na n_a}{n_a} \left( \frac{\np}{n_a + n} \right) \right) =  \na \fp + r_u, \\
& \eps^2 \Delta \fp = \np + e^{-\phi_a} \fp (  1 + h(\phi) )  + r_\phi.
\end{aligned}
\right.
\end{equation}
 where $r = (r_n, r_u,r_\phi)$ is a given source term, and where $h \in \{h_0, h_1\}$, {\it cf} \eqref{apriori2}-\eqref{h0h1}. The reason why we shall consider these two possibilities for $h$ will become clear in view of paragraph \ref{MEE}.
We add to the system  the boundary conditions
\begin{equation}
\up_3\vert_{x_3 = 0} = 0, \quad \fp\vert_{x_3=0} = 0.
\end{equation}
The crucial estimate is given by 
\begin{proposition}
\label{propL2}
 Let $(n_{a}, u_{a},\phi_{a})$ the approximate solution constructed in Theorem \ref{deriv}
  and   some smooth $(n, u, \phi)$ such that $u_{3}\vert_{x_{3=0}}=0$ and 
  \begin{equation}
  \label{hypmin}
 n_{a} + n \geq  1/M, \quad e^{-\phi_{a}}(1 + h(\phi)) \geq  1/M,  \quad |n|+  |u|+  |\phi| \leq M ,  \quad \forall t \in [0,T], \, x \in \mathbb{R}^3_{+}.
  \end{equation} 
  Then, there exist $C(M)$ and $ C( C_{a}, M )$  independent of $\eps$  ($C_{a}$ only depends on the approximate solution)  such that we have on $[0,T]$ the estimate
 \begin{align*}
 & \big\| \big( \np,\up, \fp, \eps \nabla \fp) (t) \big\|_{L^2(\R^3_{+})}^2 
   \leq   C(M)\Big(  \big\| \big( \np_{0}, \up_{0}) \big\|_{L^2(\R^3_{+})}^2  + \int_{0}^t \big\| (\eps^{-1}r_{\phi}, \pa_{t} r_{\phi},  r_{n}, r_u\big)\big\|_{L^2(\R^3_{+})}^2 \Big) \\
     & +    C(C_{a}, M)   \int_{0}^t  \big( 1 +   \big\| \nabla_{t,x}\big( n,u,\phi)\big\|_{L^\infty(\R^3_{+})}  +\eps^{-1} \| n\|_{L^\infty(\R^3_{+})}\big)  \big\| \big( \np,\up, \fp, \eps \nabla \fp\big) \big\|_{L^2(\R^3_{+})}^2 \Big)
  \end{align*}

\end{proposition}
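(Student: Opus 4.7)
The plan is to prove the estimate via a modulated version of the physical energy of the Euler-Poisson system. I define
\[
E(t) := \int_{\R^3_+} \left(\frac{1}{2}(n_a+n)|\up|^2 + \frac{T^i}{2}\frac{|\np|^2}{n_a+n} + \frac{1}{2}e^{-\phi_a}(1+h(\phi))|\fp|^2 + \frac{\eps^2}{2}|\nabla\fp|^2\right) dx,
\]
which, under the lower bounds in \eqref{hypmin}, is equivalent to $\|(\np,\up,\fp,\eps\nabla\fp)\|_{L^2(\R^3_+)}^2$ with constants depending only on $M$. The initial data $\fp(0)$ and $\eps\nabla\fp(0)$ are not free but are determined from $\np(0)$ by the Poisson equation; a standard Lax-Milgram argument with the homogeneous Dirichlet condition controls them by $\|\np(0)\|_{L^2}+\|r_\phi(0)\|_{L^2}$, so that $E(0)$ is controlled by the quantities in the statement.

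To compute $dE/dt$, I multiply the continuity equation by $T^i\np/(n_a+n)$ and the momentum equation by $(n_a+n)\up$, then integrate by parts using $\up_3|_{x_3=0}=\fp|_{x_3=0}=0$. Two cancellations drive the argument. First, the pressure-divergence terms $\pm T^i\int \np\,\div\up$ from the two equations cancel identically. Second, and much more delicate, the a priori $O(1/\eps)$ term $T^i\int(\nabla n_a/n_a)\,\np\,\up$ coming from the momentum equation is partially compensated by the term $-T^i\int \np\,\up\cdot\nabla(n_a+n)/(n_a+n)$ produced by the continuity equation; the residue is exactly
\[
T^i\int\np\,\up\cdot\frac{\nabla n_a}{n_a}\cdot\frac{n}{n_a+n}\;-\;T^i\int\frac{\np\,\up\cdot\nabla n}{n_a+n},
\]
whose absolute value is bounded by $C(C_a,M)(\eps^{-1}\|n\|_{L^\infty}+\|\nabla n\|_{L^\infty})E$ and exactly matches the corresponding factor in the claim.

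For the electric coupling, the cross term $\int(n_a+n)\up\cdot\nabla\fp$ from the momentum equation is integrated by parts using $\fp|_{x_3=0}=0$, and $\div((n_a+n)\up)$ is rewritten via the continuity equation as $-\pa_t\np-\div((u_a+u)\np)+r_n$, yielding
\[
\int(n_a+n)\up\cdot\nabla\fp=\int\fp\,\pa_t\np-\int(u_a+u)\np\cdot\nabla\fp-\int \fp\,r_n.
\]
In the first term I substitute $\np=\eps^2\Delta\fp-e^{-\phi_a}(1+h(\phi))\fp-r_\phi$ from Poisson, pull $\pa_t$ outside and integrate by parts in $x$; this produces exactly $-dE_\Phi/dt$ (with $E_\Phi$ the sum of the last two integrals in $E$), plus lower-order $L^\infty$-controlled terms and the source $\int\fp\,\pa_t r_\phi$---this is how $\pa_t r_\phi$ enters the estimate. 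For the middle term the same Poisson substitution produces three pieces: $-\eps^2\int(u_a+u)\Delta\fp\cdot\nabla\fp$, controlled after one integration by parts by $\|\nabla(u_a+u)\|_{L^\infty}\|\eps\nabla\fp\|_{L^2}^2$; a zero-order piece bounded by $\|\fp\|_{L^2}^2$; and $\int(u_a+u)r_\phi\cdot\nabla\fp$, dominated via Young as $C(M)\|u_a+u\|_{L^\infty}^2\|\eps^{-1}r_\phi\|_{L^2}^2+\tfrac{1}{2}\|\eps\nabla\fp\|_{L^2}^2$, which is where $\eps^{-1}r_\phi$ arises as a source.

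Finally, the remaining zero-order terms produced by $\pa_t$ and $\nabla$ acting on the coefficients $(n_a,u_a,\phi_a,n,u,\phi)$ are uniformly $L^\infty$-bounded by $C(C_a,M)$ using the boundary-layer structure---in particular $U^0_y=U^0_3=0$ and $u^0_3|_{x_3=0}=0$, so that $u_{a,3}$ vanishes at the boundary and Taylor-cancels the formally $1/\eps$-large derivatives of $n_a$ in combinations such as $\pa_t n_a+\div(n_a u_a)$. After Cauchy-Schwarz and Young on the source terms $r_n, r_u, \eps^{-1}r_\phi, \pa_t r_\phi$, the result is a differential inequality $dE/dt\leq a(t)E+b(t)$ of the claimed shape, and Gr\"onwall's lemma concludes. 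The main obstacle is the singular $\nabla n_a/n_a$ term: a naive bound would give a hopeless $\exp(Ct/\eps)$ growth, and only the judicious choice of weight $(n_a+n)$ in both the kinetic and the pressure parts of the modulated energy produces the partial cancellation that converts it into the tractable $\exp(C\int\eps^{-1}\|n\|_{L^\infty})$, useful once the bootstrap of Theorem \ref{main} ensures $\|n\|_{L^\infty}$ is small in $\eps$.
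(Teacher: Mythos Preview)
Your proposal is correct and follows essentially the same approach as the paper: the same modulated energy functional, the same key cancellation of the singular $\nabla n_a/n_a$ term via the weight $(n_a+n)$, the same Poisson substitution producing the $\pa_t r_\phi$ and $\eps^{-1}r_\phi$ sources, and the same use of $(u_a+u)_3|_{x_3=0}=0$ together with the exponential decay of the boundary-layer profiles to tame the remaining $O(1/\eps)$ coefficients. The only organizational difference is that you test the continuity equation directly by $T^i\np/(n_a+n)$, whereas the paper starts from the momentum equation and substitutes from continuity; the algebra is identical.
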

Note that this is indeed a stability estimate for the linearized equation since when we take $(u, n, \phi)=0$
 that is when we linearize exactly on the approximate solution, then  we  get from the above result  the estimate
 $$  \big\| \big( \np,\up, \fp, \eps \nabla \fp)(t) \|_{L^2}^2 
   \leq C_{a}  \Big(  \big\| \big( \np_{0}, \up_{0}) \|_{L^2}^2  + \int_{0}^t \Big(\big\| (\eps^{-1}r_{\phi}, \pa_{t} r_{\phi},  r_{n}, r_u\big)\|_{L^2}^2+   \big\|\big( \np,\up, \fp, \eps \nabla \fp) \|_{L^2}^2\Big)\Big)$$
   and hence from the Gronwall inequality, we obtain 
  $$  \big\| \big( \np,\up, \fp, \eps \nabla \fp) (t) \|_{L^2}^2  \leq e^{ C_{a} t }\Big(  \big\| \big( \np_{0}, \up_{0}) \|_{L^2}^2  + \int_{0}^t \big\| (\eps^{-1}r_{\phi}, \pa_{t}r_{\phi},  r_{n}, r_u\big)\|_{L^2}^2\Big), \quad \forall t \in [0,T]$$ 
  which is 
 an  $L^2$ type  estimate  for which the growth rate is uniform in $\eps\in (0, 1]$.

\begin{proof}
In order to get this estimate, we shall use  a linearized version of the total energy of the system.
At first, let us collect a few useful estimates that we shall use for $(n_{a}, u_{a})$.
In the proof, we shall denote by $C_{a}$ a number which may change from line to line but which is uniformly bounded for
 $\eps \in (0,1]$ and $T \in (0, T_{0}]$ where $T_{0}$ is the interval of time on which the approximate solution is defined.
 Since  the leading boundary layer term of $u_{a}$ vanishes, we have
\begin{equation} \label{boundnau}
 \sup_{(0,T) \times \R^3_+} |u_{a}|  + | \na_{x,t} u_a | \le C_a, \quad C_a > 0. 
 \end{equation}
 For $n_{a}$, we have
 \begin{equation}
\label{boundna}
  \sup_{(0,T) \times \R^3_+} |n_{a}| +  | \na_{x_1,x_2,t} n_a | \le C_a, \quad \sup_{(0,T) \times \R^3_+} | \pa_3 n_a | \le \frac{1}{\e} C_a.
\end{equation}
To make more precise the last estimate, we observe that 
\begin{equation}
\label{mieux1}
\pa_3 n_a= \pa_3 \left(N_0 \left(t,y,\frac{x_3}{\e}\right)\right) + O(1) = \frac{1}{\e} \partial_{Z} N_0\left(t,y,\frac{x_3}{\e}\right) + O(1),
\end{equation}
where $Z$ stands for the fast variable $x_3/\eps$ and  hence, from the exponential decay of the boundary layer, we get that
\begin{equation}
\label{mieux2}  \sup_{t  \in (0,T), x \in \R^+_3} \frac{1}{\eps}\left| x_3 \pa_3 N^0\left(t,y,\frac{x_3}{\eps}\right)   \right|  \le   \sup_{t \in (0,T), y \in \R^2, z \in \R_+} \left| z \pa_z N^0(t,y,z) \right| \: \le \: C_a,
\end{equation}    
Note also  that $\phi_a$ shares similar bounds:
\begin{equation}
\label{boundnaphi_1}
  \sup_{(0,T) \times \R^3_+} |\phi_{a}| +  | \na_{x_1,x_2,t} \phi_a | \le C_a, \quad \sup_{(0,T) \times \R^3_+} | \pa_3 \phi_a | \le \frac{1}{\e} C_a.
\end{equation}
with
\begin{equation}
\label{mieux1phi}
\pa_3 \phi_a= \pa_3 \left(\Phi_0 \left(t,y,\frac{x_3}{\e}\right)\right) + O(1) = \frac{1}{\e} \partial_{Z} \Phi_0\left(t,y,\frac{x_3}{\e}\right) + O(1),
\end{equation}
and again, from the exponential decay of the boundary layer, 
\begin{equation}
\label{mieux2phi}  \sup_{t  \in (0,T), x \in \R^+_3} \frac{1}{\eps}\left| x_3 \pa_3 \Phi^0\left(t,y,\frac{x_3}{\eps}\right)   \right|  \le   \sup_{t \in (0,T), y \in \R^2, z \in \R_+} \left| z \pa_z N^0(t,y,z) \right| \: \le \: C_a,
\end{equation}    

 \bigskip
Let us now prove the energy estimate. First, multiplying the velocity equation by $(n_a + n) \, \up$, and performing standard manipulations, we obtain:
\begin{align}
 &  {{d}\over{dt}} \int_{\R^3_+}  (n_a + n) \frac{|\up|^2}{2} = \int_{\R^3_+} (n_a + n) \up \cdot \partial_t \up + \int_{\R^3_+} \pa_t (n_a + n) \frac{|\up|^2}{2} \\
 \label{momentum}
&=  I_1 + I_2 + I_3 - \int_{\mathbb{R}_{+}^3}(\up \cdot \nabla u_{a}) \cdot (n_{a}+ n) \up+ \int_{\R^3_+} \left( r_u \cdot ((n_a+n) \up) \right) + \int_{\R^3_+} \pa_t (n_a + n) \frac{|\up|^2}{2},  
\end{align}
where 
$$ I_1 \: :=  \: -  T^i  \int_{\R^3_+}  \left(\frac{\na \np}{n_a + n} - \frac{\na n_a}{n_a} \left( \frac{\np}{n_a + n} \right) \right) \cdot ((n_a + n) \up), \quad I_2  \: := \:  \int_{\R^3_+} \na \fp \cdot (n_a + n) \up,
 $$
 $$
 I_3 \: = \: - \int_{\R^3_+} [(u_a + u) \cdot \na \up] \cdot (n_a + n) \up .
 $$
The last three  terms at the r.h.s. of \eqref{momentum} can be  easily estimated by using \eqref{boundnau}, \eqref{boundna} and
 \eqref{hypmin}: 
\begin{equation} \label{firstterms}
\begin{split}
\int_{\R^3_+} \left( r_u \cdot ((n_a + n)\,  \up) \right) \: \le \:  C( C_{a}, M)   \| r_u (t)\|_{L^2(\R^3_+)}  \| \up(t) \|_{L^2(\R^3_+)}, \\
 \int_{\R^3_+} \pa_t (n_a + n) \frac{|\up|^2}{2} \leq C( C_a , M)\left(1+ \Vert  \partial_{t} \np \Vert_{ L^\infty}\right)\|\up(t)\|_{L^2(\R^3_{+})}^2,\\
 \int_{\mathbb{R}_{+}^3}(\up \cdot \nabla u_{a}) \cdot (n_{a}+ n) \up \le C( C_a , M) \|\up(t)\|_{L^2(\R^3_{+})}^2.
\end{split}
 \end{equation}
Let us turn to the treatment of $I_1$. Integrating by parts, we first have:
\begin{align*}
-   \int_{\R^3_+}  \left(\frac{\na \np}{n_a + n} \right) \cdot ((n_a + n) \up) &= \int_{\R^3_+} \frac{\np}{n_a + n} \div((n_a+n) \up) -  \int_{\R^3_+} \frac{\na (n_a + n)}{(n_a + n)^2} (n_a+n) \cdot \np \, \up
\end{align*}
and hence, we obtain that
\begin{equation}
\label{I1est1} I_{1}=  T^i   \int_{\R^3_+} \frac{\np}{n_a + n} \div((n_a+n) \up) +  T^i \int_{\R_{+}^3}
\Big( \frac{\na n_a}{n_a}  -  \frac{\na (n_a + n)}{n_a + n} \Big)\cdot \np \, \up:=I_{1}^1 + I_{1}^2.\end{equation}
To estimate $I_{1}^2$, we observe that
$$ \frac{\na n_a}{n_a}  -  \frac{\na (n_a + n)}{n_a + n}= { \nabla n_{a}\, n \,   - n_{a} \nabla n\, \over n_{a}(n_{a}+ n) }.$$
Consequently, we  can use \eqref{boundnau}, \eqref{boundna} and \eqref{hypmin} to get that
\begin{equation}
\label{I12}
|I_{1}^2 | \leq C(C_{a}, M) \big(  \|\nabla  n \|_{L^\infty} + \eps^{-1} \| n \|_{L^\infty}\big)  \|\up\|_{L^2(\R^3_{+})}\, \|\np\|_{L^2(\R^3_{+})}.
\end{equation}
 To estimate $I_{1}^1$,  we observe that we can write the first line of \eqref{linear} under the form
\begin{equation}
\label{nbis}
\partial_{t}\np + \div \big(  (u_{a}+ u) \np\big) + \div\big((n^a+ n) \up \big)= r_{n}.
\end{equation}
 By using this equation to express   $\div\big((n^a+ n) \up\big)$, we obtain
\begin{align}
{1 \over T^{i}} I_{1}^1 & =- \int_{\R^3_+} \frac{\np}{n_a + n} \left(  \pa_t \np +  \div \big((u_{a} + u)  \np\big)  -  r_n \right) \nonumber\\
 & = -  \pa_t \int_{\R^3_+} \frac{\np^2}{2 (n_a + n)} 
  + \: {1 \over 2}  \int_{\R^3_+}  \np^2  \Big( \pa_t + (u^a + u) \cdot \nabla \Big)\big( { 1 \over n^a+ n} \big) \nonumber\\
  \label{gentil}
  & \quad   -{1 \over 2 }  \int_{\R^3_+} \frac{\np^2 \, \div ( u^a + u)  }{n_a + n}      
 +  \int_{\R_3^+} r_n \frac{\np}{n_a + n}.
\end{align}
In the above expression, for the two last terms,  we  use  \eqref{hypmin} to get
$$   \Big| - {1 \over 2} \int_{\R^3_+} \frac{\np^2 \,\div(u_{a} + u ) }{n_a + n}     
 +  \int_{\R_3^+} r_n \frac{\np}{n_a + n} \Big|  \leq C(C_{a}, M)  \Big(\|r_{n}(t)\|_{L^2}  \, \|\np(t)\|_{L^2} + ( 1 + \|\nabla u \|_{L^\infty}) \,\|\np(t)\|_{L^2}^2 
  \Big).$$
  Next, we observe that
\begin{align*}
 &     \int_{\R^3_+}  \np^2  \Big( \pa_t + (u^a + u) \cdot \nabla \Big)\big( { 1 \over n^a+ n} \big)  \\
&  = -  \int_{\R^3_{+}} {\np^2 \over (n_{a}+ n)^2} \Big( \partial_{t}(n_{a} + n) +( u_{a}+ u) \cdot \nabla \big( n_{a}+ n)\Big).
  \end{align*}
  From the equation satisfied by $n_{a}$ and \eqref{boundnau}, we  get that
 $$ | \partial_{t} n_{a} + u_{a} \cdot \nabla  n_{a}  |\leq C_a.$$
  Also,   by using that $(u)_{3}$ vanishes on the boundary, we have $ |(u)_{3}| \leq x_{3} \|\nabla  u\|_{L^{\infty}_{T}}$
   and hence by using  \eqref{mieux2},  we also have
   \begin{equation}
   \label{cancel} \| u  \cdot \nabla n_{a}\|_{L^2} \leq  C_a\|\nabla  u\|_{L^\infty}.\end{equation}
  Consequently, we obtain that 
   \begin{align*}
 &  \Big|  \int_{\R^3_+}  \np^2  \Big( \pa_t + (u^a + u) \cdot \nabla \Big)\big( { 1 \over n^a+ n} \big)  \Big| \\
 &  \leq C(C_{a}, M) ( 1 +  \|\nabla_{t,x}n \|_{L^\infty} +   \|\nabla u\|_{L^\infty} )\|\np\|_{L^2}^2.
\end{align*}
We have thus proven that
\begin{align}
\label{I1}
& \Big| I_{1}  + {{d}\over{dt}}  \int_{\R^3_+}   \frac{ T^{i} \np^2}{2 (n_a + n) }\Big|    \\
\nonumber &\leq   C(C_a, M) \Big(  \big(  1 +  \| \nabla_{t,x}n \|_{L^{\infty}} +    \|\nabla u\|_{L^{\infty}}\big) \|\np\|_{L^2}^2 + 
\big( \eps^{-1} \| n\|_{L^\infty} +  \| \nabla n\|_{L^\infty} \big) 
 \|\np\|_{L^2}\, \|\up\|_{L^2} \Big)
   +\|r_{n}\|_{L^2}^2.
\end{align}

As regards $I_2$,  since
$$ I_{2} = -  \int_{\R^3_{+}} \fp \, \div \big( (n_{a}+ n)  \up \big), $$ 
 we use once again  \eqref{nbis} to write it as 
\begin{equation}
\label{I2-2}
I_2  =  \int_{\R^3_+}  \fp \, \pa_t \np \:  + \:  \int_{\R^3_+}  \fp  \:\div \big((u_{a}+ u)  \np \big)  - 
   \int_{\R^3_+}   r_n \fp.    
\end{equation} 

By differentiating with respect to time  the Poisson equation in \eqref{linear}, we can express $\pa_t n$ in terms of $\fp$, and substitute into the first  term at the r.h.s of \eqref{I2-2}: 
\begin{align*}
\int_{\R^3_+}  \fp \, \pa_t \np & =  \int_{\R^3_+}   \fp \, \left( \eps^2 \pa_t \Delta \fp  - \pa_t (e^{-\phi_{a}}( 1+ h(\phi)) \fp)  - \pa_t r_\phi \right) \\
 &= - \eps^2  \pa_t \int_{\R^3_+}  \frac{1}{2} |\na \fp|^2 \: - \: \eps \pa_t \int_{\R^3_+} \frac{1}{2} \fp^2 e^{-\phi_{a}} (1 + h(\phi)) \: - \:  \frac{1}{2} \int_{\R^3_+} \pa_t (e^{-\phi_{a}}(1+h(\phi))\big) |\fp|^2 \\
 & \quad  \: - \: \int_{\R^3_+} \fp \,  \pa_t r_\phi.
 \end{align*}
 
 For the last two terms, one has by using \eqref{boundnaphi_1} and \eqref{hypmin} the straightforward estimate:
\begin{equation}
\label{uno}
 \begin{aligned}
  - \frac{1}{2} \int_{\R^3_+} \pa_t (e^{-\phi_{a}}(1+h(\phi))) |\fp|^2  \:& - \: \int_{\R^3_+} \fp \,  \pa_t r_\phi \\
  &\leq C(C_a, M)
  \big(  1 +  \|\partial_{t} \phi\|_{L^\infty} \big) \| \fp \|_{L^2(\R^3_+)}^2 +  \|\partial_{t}r_{\fp}\|_{L^2(\R^3_{+})} \|\fp\|_{L^2(\R_{+}^3}).
 \end{aligned}
 \end{equation}
As regards the second term at the r.h.s. of \eqref{I2-2}, we  integrate by parts and use  the Poisson equation to  express $n$ in terms of $\fp$. We get  
\begin{align*}
 \int_{\R^3_+}  \fp \, \div(\np (u_a + u))  & =  - \int_{\R^3_+}   \na \fp \cdot (\np (u_a+ u)) \\
 & = - \int_{\R^3_+}   \na \fp \cdot (\eps^2 \Delta \fp (u_a + u))  \: + \:  \int_{\R^3_+}   \na \fp \cdot (e^{-\phi_{a}}(1+ h(\phi)) \fp (u_a+u)) \\
 & + \:     \int_{\R^3_+}   \na \fp \cdot (r_\phi (u_a + u))  = J_1 + J_2 + J_3.
\end{align*}
One has 
\begin{equation}
\label{duo}
J_2 =  \frac{1}{2} \int_{\R^3_+}   \na |\fp|^2 \cdot (e^{-\phi_{a}} (1+ h(\phi)) (u_a+ u)) \: = \: - \frac{1}{2}  \int_{\R^3_+}   |\fp|^2 \, \div(e^{-\phi_{a}}(1+ h(\phi))  (u_a + u)). 
\end{equation}
Once again, one has to be careful due to the boundary layer part of $\phi_{a}$. Nevertheless, proceeding as for estimate \eqref{cancel}, 
 we obtain that $(u_{a} + u) \cdot \nabla \phi_{a}$ is  uniformly bounded in $\eps$ and hence, we find:
$$
J_2 \: \le \: C(C_{a}, M) ( 1 + \| \nabla \phi\|_{L^\infty} +  \| \nabla u\|_{L^\infty})\|\fp\|_{L^2}^2. 
$$
Straightforwardly, thanks to \eqref{boundnau} one has also 
$$ J_3 \: \le \: \frac{1}{\e} (C_a \, + \, \|  u \|_{L^\infty})     \, \| r_\phi \|_{L^2(\R^3_+)} \,  \| \e \na \fp \|_{L^2(\R^3_+)}.  $$
Finally, we compute 
\begin{align}
\label{J1}
J_1 & = \: \eps^2 \int_{\R^3_+} ((\na \fp \cdot \na )  (u_a + u))\cdot \na \fp \: + \: \eps^2 \int_{\R^3_+}   \left( \na \frac{|\na \fp|^2}{2} \right) \cdot (u_a+u)  \\
       & \leq  \:  \eps^2 \int_{\R^3_+}  | \na \fp |^2 \, |\na (u_a+u)|  \: -  \: \eps^2\int_{\R^3_+}  \frac{|\na \fp|^2}{2}  \div (u_a+u) \:  \le \: 
       C(C_a, \|\na_x u \|_{L^\infty}) \, \eps^2 \int_{\R^3_+} | \na \fp |^2  \nonumber
\end{align}
where the last bound comes again from \eqref{boundnau}.


%
%

Combining the previous inequalities, we get the following bound: 
\begin{equation} \label{I2}
\begin{aligned}
&   I_{2} + {1 \over 2} {{d}\over{dt}}\Big( \int_{\R^3_{+}} \eps^2 |\nabla \fp|^2 + e^{\phi_{a}} (1 + h(\phi)) \fp^2\Big) \\ 
  &   \leq C (  C_a, M)\big( 1 + \|\nabla_{t,x} \phi\|_{L^\infty} \, + \,  \|\nabla_{x} u\|_{L^\infty}\big)   \big( \| \fp\|_{L^2}^2 + \eps ^2 \|\nabla \fp\|_{L^2}^2\big)  + \eps^{-2} \| r_{\phi}\|_{L^2}^2 + \|\partial_{t} r_{\phi}\|_{L^2}^2 \: + \: \| r_n \|_{L^2}^2. 
\end{aligned}
\end{equation} 
Finally, to estimate $I_3$ defined after  \eqref{momentum}, we write:
\begin{align*}
I_3 &= - \int_{\R^3_+} (n_a+n)(u_a + u) \cdot \na \frac{|\up|^2}{2} \: = \: \int_{\R^3_+}  \div( (n_a+n)(u_a + u)) \frac{|\up|^2}{2}.
\end{align*}
Relying once again on \eqref{boundnau} and proceeding like for \eqref{cancel}, we infer that:
\begin{equation}
\label{I3}
I_3 \leq C(C_a, M) \left(1 +  \| \nabla (n, u)\|_{L^\infty} \right) \Vert \up \Vert_{L^2(\R^3_+)}^2.
\end{equation}
Eventually, combining \eqref{momentum} with \eqref{firstterms}-\eqref{I1}-\eqref{I2}-\eqref{I3}, we obtain 
\begin{equation}
\label{esti}
\begin{aligned}
 {d}\over{dt} &\int_{\R^3_+}  \left( (n_a + n) \frac{|\up|^2}{2} \: + \: T^i \, \frac{\np^2}{2 (n_a+ n)} \: + \: \eps^2 \frac{|\na \fp|^2}{2} \: + \:    \frac{1}{2} |\fp|^2 e^{-\phi_{a}}( 1 + h(\phi)) \right) \\
 & \leq C( C_a, M) \big(1 +  \|\nabla_{t,x} (u, n, \phi)\|_{L^\infty} + \eps^{-1} \| n\|_{L^\infty}\big)\big( \|\up\|^2_{L^2} + \|\np\|^2_{L^2}+ \eps^2\|\nabla \fp \|^2_{L^2}
  + \|\fp\|_{L^2}^2 \big) \\
  &\quad  +  \eps^{-2} \|r_{\phi}\|_{L^2}^2 + \| \partial_{t} r_{\phi}\|_{L^2}^2+  \|r_{n}\|_{L^2}^2 + \|r_{u}\|_{L^2}^2.
\end{aligned}
\end{equation}
We end the proof by integrating in time and by using \eqref{hypmin}. 
\end{proof}

\subsection{Nonlinear stability}
  We shall now work on the nonlinear system \eqref{EPP} in order to get Theorem \ref{main}.
  Thanks to the well-posedness in $H^m$ for $m \geq 3$ of the system \eqref{EPP}, we can define
$$ T^\eps= \sup \big\{ T \in [0,  T_{0}], \quad   \forall t \in [0,T],  \|(n,u, \phi, \eps \nabla \phi)\|_{H^m_{\eps}(\R^3_{+})} \leq \eps^r,  \mbox{ and \eqref{apriori2} is verified} \big\}$$
where $r$ is chosen such that
\begin{equation}
\label{choixr}
 5/2<r<K
 \end{equation}
 and  the $H^m_{\eps}$ norm is defined by 
 $$ \|f\|_{H^m_{\eps}(\R^3_{+})} = \sum_{ | \alpha | \leq m} \eps^{|\alpha|} \| \partial_{x_{1}}^{\alpha_{1}} \partial_{x_{2}}^{\alpha_{2}} \partial_{x_{3}}^{\alpha_{3}} f \|_{L^2(\R^3_{+})}.$$  
   We shall also use the norms:
\begin{equation}
\label{defhronde}
\|f(t)\|_{\Hc^m_{co,\eps}(\R^3_{+})} = \sum_{ | \alpha | \leq m}  \| \ZZ_{0}^{\alpha_{0}} \ZZ_{1}^{\alpha_{1}}  \ZZ_{2}^{\alpha_{2}}
    \ZZ_{3}^{\alpha_{3}}f (t)\|_{L^2(\R^3_{+})}
   \end{equation}
    where the vector fields $\ZZ_{i}$ are defined by 
    \begin{equation}
    \label{defvectorfields}
     \ZZ_{0}=\eps \partial_t, \quad \ZZ_{i}= \eps \partial_{i}, \quad i= 1, \, 2, \quad \ZZ_{3}= \eps  {x_{3} \over 1+ x_{3}} \partial_{3}
     \end{equation}
 and 
    $$   \|f(t)\|_{\Hc^m_{\eps}(\R^3_{+})} = \sum_{ | \alpha | \leq m} \eps^{|\alpha|} \| \partial_{t}^{\alpha_{0}} \partial_{x_{1}}^{\alpha_{1}} \partial_{x_{2}}^{\alpha_{2}} \partial_{x_{3}}^{\alpha_{3}} f(t) \|_{L^2(\R^3_{+})}.$$
  For the sake of brevity, we will also use the following notation:
  $$ \ZZ^\a = \ZZ_{0}^{\alpha_{0}} \ZZ_{1}^{\alpha_{1}}  \ZZ_{2}^{\alpha_{2}}
    \ZZ_{3}^{\alpha_{3}}, \quad \text{for  } \a=(\a_0, \a_1,\a_2,\a_3).$$
  
   Finally, we set  
     $$ Q_{m}(t)=  \|(n,u, \phi, \eps\nabla \phi)(t)\|_{\Hc^m_{co, \eps}} + 
   \|\omega(t) \|_{\Hc^{m-1}_{\eps}}$$
   where we have set $\omega= \eps \, \curl u$ i.e:
   \begin{equation}
\label{defom}
\omega =  \eps  \begin{pmatrix}\pa_2 u_3 - \pa_3 u_2 \\ \pa_3 u_1 - \pa_1 u_3 \\ \pa_1 u_2 - \pa_2 u_1 \end{pmatrix}.
\end{equation} 
 
 We shall first prove that  $Q_{m}$ is the important quantity to control for the continuation of the solution and then we shall estimate $Q_{m}(t)$
  by using  Proposition \ref{propL2}.

 \subsubsection{Estimate of the  $\Hc^m_{\eps}$ norm}
  Let us recall a  classical estimate for products in   dimension $3$:
 \begin{equation}
 \label{prod0}
  \|u v\|_{L^2(\R^3_{+})} \lesssim  \|u \|_{H^{s_{1}}(\R^3_{+})} \|v\|_{H^{s_{2}}(\R^3_{+})},  \end{equation}
   with  $s_{1}+ s_{2} = 3/2$,  $s_{1}\neq 0$, $s_{2} \neq 0$. More generally, 
\begin{equation} \label{prod0bis}
   \| u_1 \dots u_{K} \|_{L^2(\R^3_{+})} \lesssim  \| u_1 \|_{H^{s_1}(\R^3_{+})} \dots  \| u_{K} \|_{H^{s_{K}}(\R^3_{+})}
 \end{equation}
 for $s_1 + \dots + s_{K} = \frac{3}{2} (K-1)$, $s_1 \neq 0, \dots, s_p \neq 0$.

\medskip
As an application, we state
 \begin{lemma}
  \label{lemprodt}
  For any $u$, $v$ and $\eps \in (0,1]$, we have uniformly in $\eps$
  \begin{multline}
\label{prodt1}   \| (\eps\partial_{t})^k (uv) \|_{H^{l}_{\eps}(\R^3_{+})} 
 \lesssim \|u \|_{L^\infty} \| (\eps\partial_{t})^k v\|_{H^{l}_{\eps}(\mathbb{R}^3_{+})}
 + \|v\|_{L^\infty} \| (\eps\partial_{t})^k u\|_{H^{l}_{\eps}(\mathbb{R}^3_{+})}  
 \\  +  \eps^{-{3\over 2}}\left(  \|\langle \eps \pa_{t} \rangle^ku\|_{H^{l}_{\eps}(\R^3_{+})}  +   \|\langle \eps \pa_{t} \rangle^{k- 1}u\|_{H^{l+1}_{\eps}(\R^3_{+})}\right)\left(
 \|\langle \eps \pa_{t}\rangle^k v\|_{H^{l}_{\eps}(\R^3_{+})} +  \|\langle \eps \pa_{t}\rangle^{k- 1} v\|_{H^{l+1}_{\eps}(\R^3_{+})}\right)
 \end{multline}
 with the notation $\langle \eps \pa_{t} \rangle^k f = ( f, \eps \pa_{t}f, \cdots, (\eps \pa_{t})^k f)$ if $k \geq 0$, $0$ if $k<0$.
 Moreover for any smooth function  $F$ with $F(0) = 0$, we  have
   \begin{equation} \label{prodt2}
  \|(\eps \partial_{t})^k  F(u) \|_{H_\eps^l(\R^3_+)}   \,  \leq \, C   \: \big(
 \|\langle \eps \pa_{t}\rangle^k u\|_{H^{l}_{\eps}(\R^3_{+})} +  \|\langle \eps \pa_{t}\rangle^{k- 1} u\|_{H^{l+1}_{\eps}(\R^3_{+})}\big), \quad l \ge 1,
  \end{equation}
with 
$$ C = C\big[ \|u\|_{L^\infty(\R^3_+)},  \eps^{-{ 3 \over 2} } \|\langle \eps \pa_{t} \rangle^k u \|_{ H^l_{\eps}(\R^3_{+})} +  \eps^{-{ 3 \over 2} } \|\langle \eps \pa_{t} \rangle^{k-1} u \|_{ H_\eps^{l+1}(\R^3_{+})} \big].  $$

  \end{lemma}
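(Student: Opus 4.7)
The plan is a combination of the Leibniz rule, the Fa\`a di Bruno formula, and appropriately rescaled versions of \eqref{prod0} and of the Sobolev embedding $H^2\hookrightarrow L^\infty$. The starting point is the observation that the rescaling $U(y):=u(\eps y)$ gives $\|U\|_{H^s(\R^3_+)}=\eps^{-3/2}\|u\|_{H^s_{\eps}(\R^3_+)}$ and $\|uv\|_{L^2}=\eps^{3/2}\|UV\|_{L^2}$. Applied to \eqref{prod0} this yields the rescaled product bound
\[
\|uv\|_{L^2(\R^3_+)}\;\lesssim\;\eps^{-3/2}\,\|u\|_{H^{s_1}_{\eps}(\R^3_+)}\|v\|_{H^{s_2}_{\eps}(\R^3_+)},\qquad s_1+s_2=\tfrac32,\ s_1,s_2>0,
\]
and the rescaled Sobolev embedding $\|u\|_{L^\infty}\lesssim \eps^{-3/2}\|u\|_{H^2_{\eps}}$. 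This is the origin of the factor $\eps^{-3/2}$ in \eqref{prodt1} and \eqref{prodt2}.

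For \eqref{prodt1} I would write the Leibniz formula
\[
(\eps\pa_t)^k\pa^\alpha(uv)=\sum_{\substack{k_1+k_2=k\\ \beta+\gamma=\alpha}}\binom{k}{k_1}\binom{\alpha}{\beta}(\eps\pa_t)^{k_1}\pa^\beta u\cdot(\eps\pa_t)^{k_2}\pa^\gamma v,
\]
and split the weight $\eps^{|\alpha|}=\eps^{|\beta|}\eps^{|\gamma|}$ so that each factor carries its natural $\eps$'s. The extreme terms in which all derivatives fall on one factor (that is, $(k_2,\gamma)=(0,0)$ or $(k_1,\beta)=(0,0)$) are handled by $L^\infty\times L^2$ and give the leading two contributions $\|u\|_{L^\infty}\|(\eps\pa_t)^k v\|_{H^l_\eps}$ and $\|v\|_{L^\infty}\|(\eps\pa_t)^k u\|_{H^l_\eps}$. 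Each remaining ``interior'' term has at least one derivative missing on each side, so it can be controlled by the rescaled bilinear inequality above with $s_1=s_2=3/4$; at worst one has to invest one extra spatial derivative in one of the two factors, which explains why $\|\langle\eps\pa_t\rangle^{k-1}\cdot\|_{H^{l+1}_\eps}$ appears together with $\|\langle\eps\pa_t\rangle^k\cdot\|_{H^l_\eps}$ on the right-hand side.

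For \eqref{prodt2} I would use the Fa\`a di Bruno formula to expand $(\eps\pa_t)^k\pa^\alpha F(u)$ as a linear combination of terms of the form $F^{(j)}(u)\prod_{i=1}^j (\eps\pa_t)^{k_i}\pa^{\alpha_i}u$, with $\sum k_i=k$, $\sum\alpha_i=\alpha$, and each $k_i+|\alpha_i|\geq 1$. The prefactor $F^{(j)}(u)$ is bounded in $L^\infty$ by a constant depending only on $\|u\|_{L^\infty}$. For $j\geq 2$ one places all but one factor in $L^\infty$ by the rescaled embedding $\|\cdot\|_{L^\infty}\lesssim\eps^{-3/2}\|\cdot\|_{H^2_\eps}$ and the remaining factor in $L^2$; the derivative count $\sum(k_i+|\alpha_i|)=k+|\alpha|\leq k+l$ then keeps every factor within either $\|\langle\eps\pa_t\rangle^k u\|_{H^l_\eps}$ or $\|\langle\eps\pa_t\rangle^{k-1}u\|_{H^{l+1}_\eps}$. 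For $j=1$ the term is linear in a single derivative of $u$ and no product estimate is needed; the boundary case $j=0$, which can occur only when $k=|\alpha|=0$, is absorbed by writing $F(u)=u\int_0^1 F'(su)\,ds$ and applying \eqref{prodt1}, using the hypothesis $F(0)=0$.

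The only nontrivial point is the combinatorial bookkeeping: one has to dispatch the time and space derivatives on the various factors so that each fits in $\|\langle\eps\pa_t\rangle^k\cdot\|_{H^l_\eps}$ or in the ``slack'' space $\|\langle\eps\pa_t\rangle^{k-1}\cdot\|_{H^{l+1}_\eps}$, and so that at most one factor per term is estimated in $L^\infty$ (hence at most one factor of $\eps^{-3/2}$ is produced). Once this dispatch is set up, the argument reduces to a mechanical application of the rescaled bilinear inequality and Sobolev embedding.
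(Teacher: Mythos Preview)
Your argument for \eqref{prodt1} is essentially the paper's: rescale to $\eps=1$, Leibniz, put the two extreme terms in $L^\infty\times L^2$, and handle the interior terms by the bilinear estimate \eqref{prod0} with $s_1,s_2<1$. That part is fine.

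For \eqref{prodt2} there is a genuine gap. Your strategy of putting $j-1$ factors in $L^\infty$ via the embedding $\|\cdot\|_{L^\infty}\lesssim\eps^{-3/2}\|\cdot\|_{H^2_\eps}$ costs two extra spatial derivatives per $L^\infty$ factor, and this can exceed the available budget when $l$ is small. Take the minimal case $l=1$, $|\alpha|=1$, $j=2$, with the two factors $(\eps\pa_t)^k u$ and $\eps\pa_{x_1}u$. Placing $(\eps\pa_t)^k u$ in $L^\infty$ requires control of $\|(\eps\pa_t)^k u\|_{H^2_\eps}$, i.e.\ $k$ time derivatives together with $2$ spatial derivatives; neither $\|\langle\eps\pa_t\rangle^k u\|_{H^1_\eps}$ nor $\|\langle\eps\pa_t\rangle^{k-1}u\|_{H^2_\eps}$ provides this. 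Placing $\eps\pa_{x_1}u$ in $L^\infty$ requires $\|u\|_{H^3_\eps}$, which is likewise unavailable. So the $L^\infty\times L^2$ splitting simply does not close here. (Your last paragraph also says ``at most one factor per term is estimated in $L^\infty$'', which for $j\ge 3$ contradicts your earlier ``all but one factor in $L^\infty$''; this suggests the bookkeeping was not fully carried out.)

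The paper avoids this by using the multilinear product rule \eqref{prod0bis} with fractional exponents rather than $L^\infty$. In the example above one writes
\[
\|F''(u)\,(\eps\pa_t)^k u\,\eps\pa_{x_1}u\|_{L^2}\;\lesssim\;C[\|u\|_{L^\infty}]\,\eps^{-3/2}\,\|(\eps\pa_t)^k u\|_{H^{s_1}_\eps}\,\|\eps\pa_{x_1}u\|_{H^{s_2}_\eps},\qquad s_1+s_2=\tfrac32,
\]
and the choice $s_1\le 1$, $s_2\le 1$ makes both factors fit: $\|(\eps\pa_t)^k u\|_{H^1_\eps}\le\|\langle\eps\pa_t\rangle^k u\|_{H^1_\eps}$ and $\|\eps\pa_{x_1}u\|_{H^1_\eps}\le\|\langle\eps\pa_t\rangle^{k-1}u\|_{H^2_\eps}$. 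For $j\ge 3$ the paper uses \eqref{prod0bis} with $p$ exponents summing to $\tfrac32(p-1)$, each close to $\tfrac32$, which again stays within the budget. You already invoke \eqref{prod0} for \eqref{prodt1}; the fix for \eqref{prodt2} is to use \eqref{prod0bis} in the same spirit instead of the cruder $L^\infty$ embedding.
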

   \begin{proof}
   Let us  first emphasize that a simple rescaling ($t'=t/\eps$, $x' = x/\eps$) allows to restrict to the case $\eps = 1$.
   
   \medskip 
  To prove \eqref{prodt1},  it suffices to use the Leibnitz formula. When all the  derivatives are on $u$ or on $v$, we estimate the other term in $L^\infty$.
   For the remaining terms  which are under the form $\partial_{t}^{k_{1}}\partial_{x}^{\alpha_{1}} u \, (\partial_{t})^{k_{2}} \partial_{x}^{\alpha_{2}} v$
    with $k_{1}+ |\alpha_{1}| \leq k + l  - 1$ and $k_{2}+  |\alpha_{2} | \leq k+ l  -1$, we use
     \eqref{prod0} to  write
  $$ \| \partial_{t}^{k_{1}} \partial_{x}^{\alpha_{1}} u \, \partial_{t}^{k_{2}} \partial_{x}^{\alpha_{2}} v\|_{L^2}
   \leq   \|  \partial_{t}^{k_{1}} \partial_{x}^{\alpha_{1}} u \|_{H^{s_{1}}(\R^3_{+})} \| 
 \partial_{t}^{k_{2}} \partial_{x}^{\alpha_{2}} v \|_{H^{s_{2}}(\R^3_{+})}$$
  with $s_{1} + s_{2}= 3/2$. Therefore, by  taking $s_{1}$ and $s_{2}$ smaller than $1$, we obtain 
    \eqref{prodt1}.  
    
    \medskip
The proof of  \eqref{prodt2} (still for $\eps = 1$) relies on the Faa Di Bruno formula for the quantity $\pa_t^k \pa_x^\alpha F(u)$, $|\alpha| \le l$, and on the product rule \eqref{prod0bis}. For brevity, we just treat the case $l=1$. 

If $| \alpha| = 0$, $\pa_t^k \pa_x^\alpha F(u) = \pa_t^k   F(u)$ can be decomposed thanks to the Faa Di Bruno formula  as a sum of terms  of the type 
$$ F^{(p)}(u) \prod_{i=1}^p \pa_t^{k_i} u, \quad \mbox{¬†with }¬†\quad \sum k_i \le k.  $$  
Either there is zero factor in the product (that is $k=0$), and we use the bound
$$ \| F(u)   \|_{L^2(\R^3_+)}  = \| F(u)  - F(0) \|_{L^2(\R^3_+)} \: \le \: C\left[\|u \|_{L^\infty}\right] \,  \| u \|_{L^2(\R^3_+)} $$
or there is one factor in the product, which leads to 
$$  \| F'(u) \pa_t^k u  \|_{L^2(\R^3_+)}  \: \le \: C\left[\|u \|_{L^\infty(\R^3_+)}\right] \|  \pa_t^k u  \|_{L^2(\R^3_+)}  $$
or there are several factors (which means $k_i < k$ for all $i$), and we have by  inequality \eqref{prod0bis} (with $s_1, \dots, s_{p-1}$ close to $3/2$ and $s_p = 1$)
$$  \|  F^{(p)}(u) \prod_{i=1}^p \pa_t^{k_i} u  \|_{L^2(\R^3_+)}    \: \le \: C\left[\| u \|_{L^\infty(\R^3_+)}, \|\langle \pa_{t}\rangle^{k-1} u\|_{H^{3/2}(\R^3_{+})}\right]  \|\langle \pa_{t}\rangle^{k-1} u\|_{H^1(\R^3_{+})} $$
 
 If $|\alpha| = 1$, for instance $\alpha = (1,0,0)$, $\pa_t^k \pa_x^\alpha F(u)$ can be decomposed as a sum of terms of the form
 $$ F^{(p+1)}(u) \prod_{i=1}^p \pa_t^{k'_i} u \: \left(  \pa_t^{k''} \pa_{x_1} u\right), \quad \mbox{¬†with }¬†\quad \sum k'_i  + k'' \le k. $$
 Either $k'' = k$, and we use the bound
 $$ \| F'(u) \pa_t^k   \pa_{x_1} u  \|_{L^2(\R^3_+)} \: \le \:   C\left[\| u \|_{L^\infty}\right] \,   \|\langle \pa_{t} \rangle^k u\|_{H^1(\R^3_{+})} $$
 or $k'' \le k-1$. In this latter case, we apply again  \eqref{prod0bis}: if one of the $k'_i$'s is $k$, we get  
 \begin{equation*} \| F^{(2)}(u) \:  \pa_t^k u \: \:  \pa_{x_1} u \|_{L^2(\R^3_+)}
 \le \: C\left[\| u \|_{L^\infty},     \|\langle  \pa_{t} \rangle^k u \|_{ H^1(\R^3_{+}) } \right]
 \, \|\langle \pa_{t}\rangle^{k- 1} u\|_{H^2(\R^3_{+})}
  \end{equation*}
  or all $k_i$'s are less than $k-1$, so that 
   $$ \|  F^{(p+1)}(u) \prod_{i=1}^p \pa_t^{k'_i} u \,  \: \left(  \pa_t^{k''} \pa_{x_1} u \right)\|_{L^2(\R^3_+)} \: \le \:  C\left[\| u \|_{L^\infty},   \|\langle \pa_{t}\rangle^{k- 1} u\|_{H^{3/2}(\R^3_{+})}\right] \, \|\langle \pa_{t}\rangle^{k- 1} u\|_{H^{2}(\R^3_{+})}.  $$
The last inequality comes from \eqref{prod0bis} with $s_1 = \dots = s_p$ close to $3/2$ and $s_{p+1} = 1$ (for the term $\pa_t^{k''} \pa_{x_1} u$). Combining above inequalities is enough to obtain \eqref{prodt2}.

 \end{proof}

 We shall first prove that by using the equation, we can estimate the $\Hc^m_{\eps}$ norm of the solution of \eqref{EPP}
  on $[0, T^\eps):$

 \begin{proposition}
 \label{propequiv1}
 For $m \geq 3$, we have for  every $\eps \in(0, 1]$,
  for  $t\in [0, T^\eps)$
     \begin{equation} \label{equiv1}
    \|(n,u, \phi, \eps \nabla \phi)(t) \|_{L^\infty(\R^3_{+})} \leq C \eps^{ r -3/2}, \quad    \| \nabla \big(n,u, \phi, \eps \nabla \phi\big)(t) \|_{L^\infty(\R^3_{+})} \leq  C\eps^{r-5/2}
   \end{equation} 
  for some $C>0$ independent of $\eps$ and
 \begin{equation}
 \label{Hcmeps}
 \|\big(n,u, \phi, \eps \nabla \phi\big)(t) \|_{\Hc^m_{\eps}(\R^3_{+})} \leq C[C_{a}, M] \big( \eps^{K+1} + \eps^r \big)
 \end{equation}
 where $C$ stands for a continuous non-decreasing function with respect to all  its arguments which does not depend on $\eps$.
 \end{proposition}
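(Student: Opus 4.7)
The first inequality in \eqref{equiv1} follows from the Sobolev embedding $H^s(\mathbb{R}^3_{+}) \hookrightarrow L^\infty(\mathbb{R}^3_{+})$ for $s > 3/2$ combined with a scaling argument. By the change of variables $x = \eps x'$, one translates the standard embedding into the $\eps$-weighted inequalities
$$\|f\|_{L^\infty(\mathbb{R}^3_+)} \le C \eps^{-3/2} \|f\|_{H^m_\eps(\mathbb{R}^3_+)}, \qquad \|\nabla f\|_{L^\infty(\mathbb{R}^3_+)} \le C \eps^{-5/2} \|f\|_{H^m_\eps(\mathbb{R}^3_+)},$$
valid as soon as $m > 3/2$ and $m > 5/2$ respectively, hence in particular for $m \ge 3$. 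Applying these with $f = (n, u, \phi, \eps \nabla \phi)$ and invoking the a priori bound $\|f\|_{H^m_\eps} \le \eps^r$ that defines $T^\eps$ yields \eqref{equiv1}.

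For \eqref{Hcmeps}, note that the $\Hc^m_\eps$ norm differs from the $H^m_\eps$ norm only through the presence of $\eps\pa_t$ derivatives, which are not built into the definition of $T^\eps$. The strategy is to use the evolution equations in \eqref{EPP} to trade time derivatives for spatial derivatives. I will argue by induction on $k$, the number of factors of $\eps\pa_t$, proving that for every $k+|\alpha| \le m$,
$$\|(\eps\pa_t)^k(\eps\pa_x)^\alpha (n, u, \phi, \eps\nabla\phi)\|_{L^2(\R^3_+)} \le C[C_a, M]\bigl(\eps^{K+1} + \eps^r\bigr).$$
The base case $k=0$ is precisely the a priori assumption defining $T^\eps$, with the $\eps^{K+1}$ contribution originating from the residual estimate \eqref{estirestes}. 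For the inductive step, the first two equations of \eqref{EPP} yield directly
\begin{align*}
\eps \pa_t n &= -\eps\bigl[(u_a + u)\cdot\nabla n + n \div(u + u_a) + \div(n_a u)\bigr] + \eps^{K+1} R_n,\\
\eps \pa_t u &= \eps \nabla \phi - \eps \bigl[(u_a + u)\cdot\nabla u + u\cdot\nabla u_a\bigr] - \eps T^i \bigl( \ldots \bigr) + \eps^{K+1} R_u,
\end{align*}
so that applying $(\eps\pa_t)^{k-1}(\eps\pa_x)^\alpha$, invoking the tame product rule of Lemma \ref{lemprodt} for the nonlinear products, and estimating the remainders via \eqref{estirestes}, all terms reduce to quantities involving at most $k-1$ factors of $\eps\pa_t$, which are controlled by the induction hypothesis.

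For the potential $\phi$ the argument is different, since the third equation in \eqref{EPP} is Poisson-type rather than evolutionary. I differentiate it in time to get
$$\eps^2 \Delta (\pa_t \phi) + e^{-\phi_a} e^{-\phi}\,(\pa_t \phi) = \pa_t n + (\pa_t \phi_a) e^{-\phi_a} (e^{-\phi} - 1) + \eps^{K+1} \pa_t R_\phi,$$
together with the boundary condition $\pa_t\phi\vert_{x_3=0} = 0$. Since by \eqref{apriori2} the zeroth-order coefficient is bounded below by $1/M > 0$, standard elliptic estimates for the operator $\eps^2\Delta - c(x)$ give
$$\|\eps\pa_t \phi\|_{L^2} + \|\eps^2\nabla \pa_t \phi\|_{L^2} \le C\bigl(\|\eps\pa_t n\|_{L^2} + \text{lower order in $k$}\bigr),$$
and iterating this procedure together with the already-established bounds on $(\eps\pa_t)^k n$ produces the required control of all $(\eps\pa_t)^k(\eps\pa_x)^\alpha \phi$ and $(\eps\pa_t)^k(\eps\pa_x)^\alpha(\eps\nabla\phi)$.

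The main obstacle is managing the singular boundary-layer coefficients: by \eqref{boundna} and \eqref{boundnaphi_1}, $\pa_3 n_a$ and $\pa_3 \phi_a$ are of size $1/\eps$ in $L^\infty$. The extra factor $\eps$ carried by each $\eps\pa_t$ compensates part of this, and the most delicate product $u_3 \pa_3 n_a$ remains bounded because $u_3\vert_{x_3=0}=0$ forces $|u_3(x_3)| \le x_3 \|\nabla u\|_{L^\infty}$, and multiplication by $|\pa_3 N^0(x_3/\eps)| \lesssim \eps^{-1} e^{-x_3/\eps}$ then yields an $O(1)$ bound, exactly the mechanism used for \eqref{cancel}. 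Combining these observations with the $L^\infty$ control provided by the first step of the proof and Lemma \ref{lemprodt}, one closes the induction and obtains \eqref{Hcmeps} with the announced right-hand side: the $\eps^r$ part comes from the a priori $H^m_\eps$ bound, while the $\eps^{K+1}$ part collects the contributions of the residuals $R_n, R_u, R_\phi$.
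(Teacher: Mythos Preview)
Your proof is correct and follows essentially the same approach as the paper: Sobolev embedding with $\eps$-scaling for \eqref{equiv1}, and induction on the number of $\eps\pa_t$ derivatives---using the evolution equations for $(n,u)$ together with elliptic regularity for $\phi$, with products handled via Lemma \ref{lemprodt}---for \eqref{Hcmeps}. One minor remark: your appeal to the cancellation mechanism \eqref{cancel} for $u_3\pa_3 n_a$ is unnecessary here, since the extra factor $\eps$ coming from $\eps\pa_t$ already fully compensates the $1/\eps$ size of $\pa_3 n_a$ (so that $\eps\pa_3 n_a = O(1)$), as the paper's term-by-term classification into linear, quadratic, and fully nonlinear contributions makes explicit.
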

 
 \begin{proof}
The first set of  estimates  can be obtained by using the Sobolev inequality in dimension $3$ and the definition of $T^\eps$.

\medskip
To prove \eqref{Hcmeps}, we proceed by induction on the number $k$ of time derivatives. For $k=1$, we proceed as follows.  At first, by using the evolution equations on $n$ and $u$ in system \eqref{EPP}, we can compute $\eps \pa_t (n,u)$. We claim that, 
 for $t \in [0, T^\eps)$ 
 \begin{multline} \label{dtun} \| \eps \pa_{t}(n,u)(t) \|_{H^{m-1}_{\eps}(\R^3_{+})}
  \leq C\left[C_{a}, M,    \|(u,n)\|_{L^\infty(\R^3_{+})},  \|\eps \nabla(u,n)\|_{L^\infty(\R^3_{+})},  \eps^{-{3\over 2 }}  \|(u,n) \|_{H^{m}_{\eps}(\R^3_{+})}
   \right] \\ \left(  \|(u, n) \|_{H^{m}_{\eps}(\R^3_{+})}  \: + \:  \| \eps \nabla \phi \|_{H^{m-1}_{\eps}(\R^3_{+})}     + \eps^{K+1}\right).
    \end{multline}
 Indeed, the expression of $\eps \pa_t (n,u) $ involves four kinds of terms (besides the "easy ones" $\eps\na \phi$ and  $\eps^{K+1} R_{n,u}$): 

{\bf i)} terms that are linear in $\eps (\na n, \na u)$,   with coefficients depending on $u_a, n_a$. They are bounded by $C_a  \,    \| \eps \nabla(u, n) \|_{H^{m-1}_{\eps}(\R^3_{+})}$, where $C_a$ depends on the $L^\infty$ norms of $(\eps \na_x)^\a (u_a, n_a)$, with $|\a|\leq m-1$.
 
{\bf ii)}terms that are linear in $(n,u)$, with coefficients depending on $n_a, \eps \na n_a$, $\na u_a$. They are bounded by $C[C_a,M] \,  \|(u, n) \|_{H^{m-1}_{\eps}(\R^3_{+})}$.
 
{\bf iii)}  quadratic terms, involving products of $n$ or $u$ with $\eps \na n$ or $\eps \na u$. By \eqref{prodt1}, Lemma \ref{lemprodt}, we can bound them by 
\begin{multline*} 
C\left[   \|(u,n)\|_{L^\infty} + \|\eps \nabla(u,n)\|_{L^\infty(\R^3_{+})} +  \eps^{-{3\over 2 }}  \|(u,n) \|_{H^{m-1}_{\eps}(\R^3_{+})}
   \right]  \\
    \left(  \|(u, n) \|_{H^{m-1}_{\eps}(\R^3_{+})} +   \| \eps \nabla(u, n) \|_{H^{m-1}_{\eps}(\R^3_{+})} \right) 
    \end{multline*}

{\bf iv)} a fully nonlinear term, coming from the pressure in the Euler equation for the ions. It reads 
 $$ \eps \na F(n_a,n), \quad \mbox{¬†with }  F(n_a, n) \, := \, T^i \left(  \frac{\ln(n_a+ n)}{n_a + n}  -  \frac{\ln(n_a+ n)}{n_a} \right) $$
 In particular,   $F(n_a,0) = 0$. The nonlinear term can then be evaluated in $H^{m-1}_\eps$ using \eqref{prodt2} (with $k=0, l=m$).  It is bounded by 
\begin{equation*}
 C\left[M, C_a,  \|n\|_{L^\infty(\R^3_{+})},  \eps^{-{3\over 2 }}  \|n \|_{H^{m}_{\eps}(\R^3_{+})} \right] \\
  \| n \|_{H^{m}_{\eps}(\R^3_{+})}. 
 \end{equation*}

\medskip

 Combining the previous bounds yields \eqref{dtun}.  Therefore, by using \eqref{equiv1} and the definition of $T^\eps$, we  obtain
    \begin{equation}
    \label{dt1}  \| \eps \pa_{t}(n,u)(t) \|_{H^{m-1}_{\eps}(\R^3_{+})} \leq  C\left[C_{a}, M,  \eps^{r- 3/2}\right]\big( \eps^{K+1} + \eps^r \big).\end{equation}
    Moreover,  by applying $\eps \pa_{t}$ to the Poisson equation in \eqref{EPP}, we get that
    $$\big( \eps^2  \Delta    - e^{- (\phi_{a} + \phi)}\big) \eps \partial_{t} \phi= \eps  \partial_{t}n + \eps \pa_{t} \big( e^{-\phi_{a}}\big)
     \big(e^{-\phi} -1 \big) +  \eps^{K+1}  \eps\pa_{t} R_{\phi}, \quad
    \eps \pa_{t} \phi_{/x_{3}= 0}= 0$$
    and by applying next $(\eps \pa_x)^\alpha$,
  \begin{multline}
     \big( \eps^2  \Delta    - e^{- (\phi_{a} + \phi)}\big) (\eps \pa_x)^\alpha  \eps \partial_{t}  \phi \\ =  (\eps \pa_x)^\alpha \eps \partial_{t}  n + (\eps \pa_x)^\alpha G_1(\cdot, \phi) \:  + \: [ (\eps \pa_x)^\alpha ; G_2(\cdot, \phi)]   \eps \partial_{t} \phi  + \eps^{K+1} (\eps \pa_x)^\alpha   \eps \pa_{t}  R_{\phi}, 
 \end{multline}
  where $G_1(\cdot,\phi) = \eps \pa_{t} \big( e^{-\phi_{a}}\big)  \big(e^{-\phi} -1 \big)$, $\quad G_2(\cdot, \phi) =  e^{- (\phi_{a} + \phi)}$. 
From there, one can perform standard energy estimates, recursively on $|\alpha|$. The nonlinearities are handled thanks to   \eqref{prodt2}. This leads to
\begin{multline}
 \|\eps \pa_{t} ( \phi, \eps \nabla \phi )  \|_{H^{m-1}_{\eps}(\R^3_{+})}
    \leq  C\left[C_{a}, M, \|\phi\|_{L^\infty(\R^3_+)},\eps^{-3/2} \|\phi\|_{H^{m}_{\eps}(\R^3_{+})} \right] \\
     \big( \| \eps \pa_{t} n \|_{H^{m-1}_{\eps}(\R^3_{+})} +  \|\phi\|_{H^{m-1}_{\eps}(\R^3_{+})} + \eps^{K+1} \big).
 \end{multline}
    Consequently, by combining the last estimate and \eqref{dt1} and by using that $r>s$, we get that 
    $$ \big\|\eps \pa_{t} (n,u, \phi, \eps \nabla \phi )\|_{H^{m-1}_{\eps}(\R^3_{+})}
     \leq C[C_{a}, M]( \eps^{K+1} +  \eps^r).$$
     Now let us assume that we have proven that
    \begin{equation}
    \label{dtk}
    \| (\eps \pa_{t})^k (n,u, \phi , \eps \nabla \phi ) \|_{H^{m-k}_{\eps}(\R^3_{+})}
     \leq C[C_{a}, M]\big( \eps^{K+1} + \eps^r\big).
       \end{equation}
     By applying $\eps \, (\eps \pa_t )^{k}$ to the evolution equations for $n$ and $u$ in \eqref{EPP}, we obtain an expression for $(\eps \pa_t )^{k+1} (n,u)$. This expression invoves $k$ $\eps$-derivatives with respect to time of linear, quadratic and fully nonlinear terms. These terms are  evaluated  again thanks to Lemma \ref{lemprodt}.  We  get that for $k+1 \leq m$, $k \geq 1$
   \begin{align*} 
   & \| (\eps \pa_{t})^{k+1}(n,u)(t) \|_{H^{m-k-1}_{\eps}(\R^3_{+})}  \\
   & \leq  C\left[C_{a},M,    \|(u,n)\|_{L^\infty(\R^3_{+})},  \|\eps \nabla(u,n)\|_{L^\infty(\R^3_{+})}, 
  \eps^{-{3 \over 2 } } \sum_{j=0}^1 \| \langle \eps \pa_{t}\rangle^{k-j}(u,n) \|_{H^{m-k+j}_{\eps}(\R^3_{+})} 
   \right] \\
   &  \left(  \sum_{j=0}^1 \| \langle \eps \pa_{t}\rangle^{k-j}(u,n) \|_{H^{m-k+j}_{\eps}(\R^3_{+})}  
 + \|(\eps \pa_{t})^k \eps  \nabla \phi \|_{H^{m-k-1}_{\eps}(\R^3_{+})}
    + \eps^{K+1}\right)
    \end{align*}
     and hence from the induction assumption, we obtain
   \begin{equation}
   \label{dtk+11}  \| (\eps \pa_{t})^{k+1}(n,u)(t) \|_{H^{m-k-1}_{\eps}(\R^3_{+})} \leq  C[C_{a}, M]\big( \eps^{K+1} + \eps^r\big).
   \end{equation}
    Finally, we can use the Poisson equation to control $(\eps \pa_t)^{k+1} (\eps \na \phi, \phi)$. More precisely, applying $(\eps \pa_t)^{k+1}$ to the Poisson equation, we get 
    $$ \big( \eps^2  \Delta    - e^{- (\phi_{a} + \phi)}\big)( \eps \partial_{t})^{k+1} \phi= (\eps  \partial_{t})^{k+1}n + \mathcal{C}^k(\phi) +  \eps^{K+1} ( \eps\pa_{t})^{k+1} R_{\phi}, \quad
    (\eps \pa_{t})^{k+1} \phi_{/x_{3}= 0}= 0$$
    where $\mathcal{C}^k (\phi)$ is a commutator. Thanks to Lemma \ref{lemprodt} and the induction assumption, we have for this commutator the estimate
 \begin{align*}
&  \| \mathcal{C}^k (\phi) \|_{H^{m-k-1}_{\eps}(\R^3_{+})} \\ 
 & \leq C\Big[C_{a},  M,  \|\phi \|_{L^\infty(\R^3_+)}, \eps^{-3/2} \sum_{j=0}^1 \| \langle \eps \pa_{t}\rangle^{k-j} \phi \|_{H_{\eps}^{m-k+j}(\R^3_{+})} \Big]  
 \sum_{j=0}^1 \| \langle \eps \pa_{t}\rangle^{k-j} \phi \|_{H_{\eps}^{m-k+j}(\R^3_{+})}  \\ 
 &    \leq C[C_{a}, M] \big( \eps^{K+1} + \eps^r\big).
 \end{align*}
  Consequently,  the standard a priori estimates for the elliptic equation yield
   $$  \|(\eps \pa_{t})^{k+1}( \phi, \eps \nabla \phi) \|_{H^{m-k-1}_{\eps}(\R^3_{+})}
    \leq  C\left[C_{a}, M\right] \left( \| (\eps \pa_{t})^{k+1} n \|_{H^{m-k-1}_{\eps}(\R^3_{+})} + \eps^{K+1} + \eps^r \right)$$
     and hence we get by combining \eqref{dtk+11} and the last estimate that \eqref{dtk} is verified for $k$ changed in $k+1$.

\end{proof}

By using similar arguments as above, we  shall also get:

\begin{lemma}
\label{leminit}
We have for $\eps \in (0,1]$,  the estimate
$$ \|(n,u,\phi,  \eps \nabla \phi)(0) \|_{\Hc^m_{\eps}(\R^3_{+})} \leq C[C_{a}, M] \eps^{K}.$$

\end{lemma}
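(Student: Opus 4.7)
The plan is to mirror the induction argument used in the proof of Proposition \ref{propequiv1}, but now at $t=0$, where the smallness of the initial data in \eqref{EPPbc2} plays the role that the bootstrap assumption on $T^\eps$ plays there. The argument proceeds in three stages: first I would estimate the pure spatial part of the norm directly from the prescribed initial data; then handle $\phi(0)$ and $\eps \nabla \phi(0)$ via the elliptic Poisson equation; and finally bound the mixed space-time derivatives by recursion on the number of $\eps \pa_t$ factors.

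From \eqref{EPPbc2} one has $n|_{t=0} = \eps^{K+1} n_0$ and $u|_{t=0} = \eps^{K+1} u_0$ with $(n_0, u_0) \in H^m(\R^3_+)$, so the purely spatial contributions to the $\Hc^m_\eps$-norm are bounded directly by $C \eps^{K+1}$. For $\phi(0)$, I would use the third equation of \eqref{EPP}: testing $(\eps \pa_x)^\alpha$ of this equation against $(\eps \pa_x)^\alpha \phi$ for $|\alpha| \leq m$ and exploiting the sign property $-e^{-\phi_a}(e^{-\phi} - 1)\phi \geq 0$ together with \eqref{apriori2} to get coercivity, one derives
$$ \|\phi(0)\|_{\Hc^m_\eps} + \|\eps \nabla \phi(0)\|_{\Hc^m_\eps} \leq C[C_a, M]\left(\|n(0)\|_{\Hc^m_\eps} + \eps^{K+1}\|R_\phi(0)\|_{\Hc^m_\eps}\right) \leq C[C_a, M] \eps^{K+1}, $$
where $\|R_\phi\|_{\Hc^m_\eps} \leq C_a$ follows from Theorem \ref{deriv} combined with the $\eps$-weighting of the norm (each $\pa_3$ losing $\eps^{-1}$ is compensated by one $\eps$ factor).

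For time derivatives, I would proceed by induction on $k \leq m$. Assuming $(\eps \pa_t)^j (n, u, \phi, \eps \nabla \phi)(0)$ has already been estimated in $H^{m-j}_\eps$ by $C[C_a, M] \eps^K$ for all $j < k$, applying $\eps (\eps \pa_t)^{k-1}$ to the evolution equations for $n$ and $u$ in \eqref{EPP} and evaluating at $t=0$ expresses $(\eps \pa_t)^k (n, u)(0)$ as a sum of: linear terms in lower-order derivatives (controlled by induction); nonlinear products and compositions handled via Lemma \ref{lemprodt}; and the source contribution $\eps (\eps \pa_t)^{k-1}(\eps^K R_{n, u})$, which is of size $\eps^{K+1}$ in the relevant norm. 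Applying then $(\eps \pa_t)^k$ to the Poisson equation and performing the same elliptic energy estimate as above recovers the bound for $(\eps \pa_t)^k (\phi, \eps \nabla \phi)(0)$. Summing over $k \leq m$ concludes.

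The main technical obstacle, already faced in the proof of Proposition \ref{propequiv1}, is controlling the boundary-layer-induced $\eps^{-1}$ singularities in $\pa_3$-derivatives of the coefficients $n_a, \phi_a$. These are absorbed by the $\eps$-weights built into the $\Hc^m_\eps$-norm and by the extra $\eps$ factor carried by each $\eps \pa_t$. Since each application of $\eps \pa_t$ converts the source $\eps^K R$ into $\eps^{K+1} R$ without loss, and since Lemma \ref{lemprodt} propagates smallness through nonlinear products, the induction step preserves the $O(\eps^K)$ magnitude at each level; hence the estimate holds with the stated rate.
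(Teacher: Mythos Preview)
Your proposal is correct and follows essentially the same approach as the paper: the paper's proof simply says ``follows the same lines as the previous Proposition'', with the initial-data smallness \eqref{EPPbc2} replacing the bootstrap bound $\eps^r$ used there, and your three-stage outline (spatial part from the data, $\phi$ from the elliptic equation, then induction on the number of $\eps\pa_t$ factors via the evolution equations and Lemma \ref{lemprodt}) is precisely this. Your observation that each step in fact produces $O(\eps^{K+1})$ is slightly sharper than the stated $O(\eps^K)$, but of course consistent with it.
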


\begin{proof}
The proof follows the same lines as the previous Proposition. The main difference is that because of the choice of the initial data, 
 the estimate \eqref{dt1} is replaced by 
 $$  \| \eps \pa_{t}(n,u)(0) \|_{H^{m-1}_{\eps}(\R^3_{+})} \leq  C[C_{a}, \eps^{r- 3/2 }] \eps^{K}.$$
 The end of the induction can be performed in the same way.

\end{proof}

\subsubsection{Normal derivatives estimates}

The aim of this  subsection  is to prove that we can replace one normal derivative by tangential derivatives thanks to the equation
 and hence that  $ \|\eps\,\pa_{3}(n, u_{3})(t)\|_{\Hc^{m-1}_{ \eps}(\R^3_{+})}$  can be  estimated in terms of $Q_{m}$.
 We shall again need some product estimates:
   \begin{lemma}
  \label{lemprod}
  For any $u$, $v$ and $\eps \in (0,1]$, we have uniformly in $\eps$
  \begin{multline}
\label{prod1}   \| (\eps\partial_{3})^k \ZZ^\alpha (uv) \|_{L^2(\R^3_{+})} 
 \lesssim \|u \|_{L^\infty} \| (\eps\partial_{3})^k v\|_{\Hc^{|\alpha|}_{co,\eps}(\mathbb{R}^3_{+})}
 + \|v\|_{L^\infty} \| (\eps\partial_{3})^k u\|_{\Hc^{|\alpha|}_{co,\eps}(\mathbb{R}^3_{+})}  
 \\  +  \eps^{-{3\over 2}}\big( \|\langle \eps  \nabla \rangle^{k+1} u\|_{\Hc^{ |\alpha|- 1}_{co,\eps}(\R^3_{+})} 
 + \|\langle \eps \nabla \rangle^{k}\ u\|_{\Hc^{ |\alpha|}_{co,\eps}(\R^3_{+})}  \big)
 \big(\|\langle \eps \nabla \rangle^{k+1}v\|_{\Hc^{|\alpha |- 1}_{co,\eps}(\R^3_{+})}
 +\|\langle \eps \nabla \rangle^{k}v\|_{\Hc^{|\alpha |}_{co,\eps}(\R^3_{+})}\big). 
 \end{multline}
 Moreover for any smooth function  $F$ with $F(0) = 0$,   we  have  
 \begin{multline} \label{prod2}
  \|(\eps \partial_{3})^k \ZZ^\alpha F(u) \|_{L^2(\R^3_{+})}  \leq \, C_1 \, \|(\eps \partial_{3})^k  u \|_{\Hc^{|\alpha|}_{co,\eps}(\mathbb{R}^3_{+})}  \\
  + \:  C_2 \sum_{1 \le |\beta|\le 2} \left(  \| \langle \eps \pa_3 \rangle^{k-|\beta|}(\eps \na)^\beta u \|_{\Hc^{|\alpha|}_{co,\eps}(\mathbb{R}^3_{+})}  +  \| \langle \eps \pa_3 \rangle^{k+1-|\beta|} (\eps \na)^\beta u \|_{\Hc^{|\alpha|-1}_{co,\eps}(\mathbb{R}^3_{+})} \right)
    \end{multline}
   where  
   $$  C_1 =  C_1\left[ \|u\|_{L^\infty(\R^3_+)}\right], \quad 
   C_2  = C\left[ \|u\|_{L^\infty(\R^3_+)}, \eps^{-3/2} \| u \|_{\Hc^{k + | \alpha|}_{\eps}(\R^3_+)}\right]  $$
   are two  continuous, non-decreasing functions independent of $\eps$.   Moreover $C_2[\cdot, 0] = 0$.  
  \end{lemma}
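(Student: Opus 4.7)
The plan is to mimic the proof of Lemma \ref{lemprodt}, adapting it to the normal derivative $\eps\partial_3$ and the conormal vector fields $\ZZ_i$. The essential inputs will be: (i) each of the operators $\eps\partial_3, \ZZ_0, \ZZ_1, \ZZ_2, \ZZ_3$ is a first-order derivation with coefficient that is a smooth, uniformly bounded function of $x_3$ alone (constant in the first four cases, equal to $\eps\, x_3/(1+x_3)$ for $\ZZ_3$), so that a generalized Leibniz expansion applies, modulo commutators; the only nonzero commutator is $[\ZZ_3,\eps\partial_3]=-\eps(1+x_3)^{-2}\eps\partial_3$, which is a bounded multiple of $\eps\partial_3$ and therefore only reshuffles the order of composition within the same space. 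And (ii) the 3D Sobolev product inequality \eqref{prod0bis}.

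For \eqref{prod1}, expand $(\eps\partial_3)^k \ZZ^\alpha(uv)$ by iterated Leibniz. After absorbing commutators, this produces a finite sum of terms
\[
a_{k_1,k_2,\alpha_1,\alpha_2}(x_3)\,\bigl[(\eps\partial_3)^{k_1}\ZZ^{\alpha_1}u\bigr]\,\bigl[(\eps\partial_3)^{k_2}\ZZ^{\alpha_2}v\bigr],
\]
with $k_1+k_2\le k$, $|\alpha_1|+|\alpha_2|\le|\alpha|$, and smooth bounded coefficients $a$. When $(k_1,\alpha_1)=(0,0)$ one places $u$ in $L^\infty$ and bounds the second factor in $L^2$ by the $\Hc^{|\alpha|}_{co,\eps}$-norm of $(\eps\partial_3)^k v$; the symmetric case yields the $\|v\|_{L^\infty}$ term. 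For the remaining (mixed) terms, $k_i+|\alpha_i|$ is strictly smaller than $k+|\alpha|$ on both factors, and we apply \eqref{prod0bis} with $s_1,s_2>0$, $s_1+s_2=3/2$ (e.g.\ $s_1=s_2=3/4$) to two factors of the form $(\eps\partial_3)^{k_j}\ZZ^{\alpha_j}w$. The rescaling $(t,x_1,x_2)\mapsto(t/\eps,x_1/\eps,x_2/\eps)$, together with the bound $\|\cdot\|_{H^s}\lesssim \eps^{-s}\|\cdot\|_{H^s_\eps}$, converts these $H^{s_j}$ norms into $\Hc^{\,\cdot}_{co,\eps}$-norms, producing the overall factor $\eps^{-s_1-s_2}=\eps^{-3/2}$. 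The possibility of redistributing one additional $\eps\nabla$ from one factor to the other accounts for the presence of $\langle\eps\nabla\rangle^{k+1}$ applied to $\Hc^{|\alpha|-1}_{co,\eps}$ alongside $\langle\eps\nabla\rangle^{k}$ applied to $\Hc^{|\alpha|}_{co,\eps}$ on the right-hand side.

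For \eqref{prod2}, apply the Faà di Bruno formula to $(\eps\partial_3)^k\ZZ^\alpha F(u)$ to obtain a finite sum of terms
\[
a(x_3)\,F^{(p)}(u)\,\prod_{i=1}^p\bigl[(\eps\partial_3)^{k_i}\ZZ^{\alpha_i}u\bigr],\qquad \sum_i k_i\le k,\ \sum_i|\alpha_i|\le|\alpha|,\ k_i+|\alpha_i|\ge 1.
\]
The empty-product case ($p=0$, only when $k=|\alpha|=0$) uses $F(0)=0$ and $|F(u)|\le C[\|u\|_{L^\infty}]|u|$; the single-factor case $p=1$ gives $F'(u)(\eps\partial_3)^k\ZZ^\alpha u$, bounded in $L^2$ by $C_1[\|u\|_{L^\infty}]\|(\eps\partial_3)^k u\|_{\Hc^{|\alpha|}_{co,\eps}}$. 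These two cases furnish the $C_1$ term. For $p\ge 2$, each $k_i+|\alpha_i|<k+|\alpha|$, and \eqref{prod0bis} is applied exactly as in \eqref{prodt2}: one picks $s_1,\dots,s_{p-1}$ close to (but less than) $3/2$ and $s_p=1$, so that one factor is measured in an $H^1$-type norm while the others are in $L^\infty$-type (by Sobolev embedding). Since $p\ge 2$, at least two of the differential operators are distributed among the factors, which forces each nontrivial term to involve a factor $(\eps\nabla)^\beta u$ with $1\le|\beta|\le 2$ in $\Hc^{|\alpha|}_{co,\eps}$ or $\Hc^{|\alpha|-1}_{co,\eps}$ (with the $\eps\partial_3$ order adjusted accordingly); this is the origin of the $C_2$ sum. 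The condition $C_2[\cdot,0]=0$ reflects the fact that every term counted by $C_2$ contains at least one genuine derivative of $u$, and hence vanishes when $u\equiv 0$.

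The main obstacle is not conceptual but rather combinatorial: one must verify that after every Leibniz/Faà di Bruno expansion and every commutator correction, each resulting term fits in one of the categories on the right-hand side of \eqref{prod1} or \eqref{prod2} with the correct bookkeeping of normal derivatives (the $\langle\eps\nabla\rangle^{k\pm 1}$ shifts paired with $\Hc^{|\alpha|}_{co,\eps}$ or $\Hc^{|\alpha|-1}_{co,\eps}$). The degeneracy of the coefficient of $\ZZ_3$ at the boundary is harmless since only the uniform boundedness of that coefficient and its derivatives enters; likewise the rescaling in the $(t,x_1,x_2)$-variables does not interact with the conormal weight in $\ZZ_3$, which is treated as a bounded smooth multiplier.
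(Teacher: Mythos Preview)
Your proposal is correct and follows essentially the same approach as the paper's proof: Leibniz expansion, $L^\infty$ bound when all derivatives fall on one factor, and the Sobolev product estimate \eqref{prod0}/\eqref{prod0bis} with exponents $s_1+s_2=3/2$, $s_i<1$, for the mixed terms; then Faa di Bruno for \eqref{prod2} by analogy with \eqref{prodt2}. You are somewhat more explicit than the paper about the commutator $[\ZZ_3,\eps\partial_3]$ and about the rescaling, but these are elaborations of the same argument rather than a different route.
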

 Note that the last statement of the lemma implies that $C_2$ is small when its second argument is small. This will be used in the sequel to absorb some error terms in the estimates.

 \begin{proof}
  To prove \eqref{prod1},  it suffices to use again  the Leibnitz formula. When all the  derivatives are on $u$ or on $v$, we estimate the other term in $L^\infty$.
   For the remaining terms  which are under the form $(\eps\partial_3)^{k_1} \ZZ^{\alpha_1} u \, (\eps\partial_3)^{k_2} \ZZ^{\alpha_2} v$
    with $k_{1}+ |\alpha_{1}| \leq k + | \alpha | - 1$ and $k_{2}+  |\alpha_{2} | \leq k+ | \alpha| -1$, we use
     \eqref{prod0} to  write
  $$ \| (\eps\partial_3)^{k_{1}} \ZZ^{\alpha_{1}} u \, (\eps\partial_3)^{k_{2}} \ZZ^{\alpha_{2}} v\|_{L^2}
   \leq \eps^{-3/2} \|  (\eps\partial_3)^{k_{1}} \ZZ^{\alpha_{1}} u \|_{H^{s_{1}}(\R^3_{+})} \| 
(\eps\partial_3)^{k_{2}} \ZZ^{\alpha_{2}} v \|_{H^{s_{2}}(\R^3_{+})}$$
  with $s_{1} + s_{2}= 3/2$ and we get the result  by  taking $s_{1}$ and $s_{2}$ smaller than $1$.
 
 \medskip
 The proof of \eqref{prod2} is very similar to the one of \eqref{prodt2}: broadly, $(\eps \pa_3)$  substitutes to  $\eps \na$, whereas $\ZZ$ substitutes to $\eps \pa_t$.  The full gradient terms $(\eps \na)^\beta$, $|\beta|=1, 2$, are still connected to the use of \eqref{prod0bis}, with $s_i \in [1,3/2]$.    
We leave the details to the reader.  
 \end{proof}
 
With this technical lemma in hand, we now prove the
 
\begin{proposition}
\label{propdn}
There exists $\eps_{0}$ such that for every $\eps \in (0, \eps_{0}]$, $t \in [0, T^\eps)$,  $0 \leq k \leq m$,
$$ \| (\eps \partial_{3})^k (n,u, \phi, \eps \nabla \phi ) (t)\|_{\Hc_{co,\eps}^{m-k}(\R^3_{+})}
\leq  C[C_{a},  M]\big( \eps^{K+1} +  Q_{m}(t)\big).$$
 
\end{proposition}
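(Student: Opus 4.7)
The plan is to argue by induction on $k$, trading one normal derivative $\eps\pa_3$ at a time against a combination of conormal derivatives, the vorticity $\omega$, and quantities with strictly fewer normal derivatives, by invoking each of the three equations in \eqref{EPP} in turn. The base case $k=0$ reduces to the definition of $Q_m$; for the inductive step one supposes the estimate holds at all levels $j<k$ and treats the four unknowns $\phi$, $u_y$, $u_3$, $n$ separately.

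For $\phi$ (and hence $\eps\nabla\phi$), solving the Poisson equation in \eqref{EPP} for $\eps^2\pa_3^2\phi$ gives
$$\eps^2\pa_3^2\phi \: = \: -\eps^2\Delta_y\phi \: + \: n \: - \: e^{-\phi_a}(e^{-\phi}-1) \: + \: \eps^{K+1}R_\phi,$$
so applying $(\eps\pa_3)^{k-2}\ZZ^\alpha$ with $|\alpha|\le m-k$ trades two normal derivatives for tangential ones plus lower-order quantities handled by the induction hypothesis; the nonlinearity is evaluated via \eqref{prod2} in Lemma \ref{lemprod}, the smallness of its second constant $C_2$ being provided by \eqref{equiv1} and Proposition \ref{propequiv1}. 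For $u_y$, the definition \eqref{defom} of the vorticity yields the algebraic identities
$$\eps\pa_3 u_1 \: = \: \omega_2 \: + \: \eps\pa_1 u_3, \qquad \eps\pa_3 u_2 \: = \: -\omega_1 \: + \: \eps\pa_2 u_3,$$
so $(\eps\pa_3)^k\ZZ^\alpha u_y$ decomposes into $(\eps\pa_3)^{k-1}\ZZ^\alpha\omega$, controlled by $\|\omega\|_{\Hc^{m-1}_\eps}$ (since $\ZZ_3$ is dominated by $\eps\pa_3$ and the total number of $\eps$-derivatives is at most $m-1$), plus a conormal derivative of $(\eps\pa_3)^{k-1}u_3$, controlled by the induction hypothesis. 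For $n$ and $u_3$, isolating the $\eps\pa_3$ terms in the first equation and in the third component of the second equation of \eqref{EPP} yields the $2\times 2$ linear system
$$\begin{pmatrix}(u_a+u)_3 & n_a+n\\ T^i/(n_a+n) & (u_a+u)_3\end{pmatrix}\begin{pmatrix}\eps\pa_3 n\\ \eps\pa_3 u_3\end{pmatrix} \: = \: \begin{pmatrix}F_1\\ \eps\pa_3\phi+F_2\end{pmatrix} \: + \: O(\eps^{K+1}),$$
where $F_1,F_2$ are built only from conormal derivatives of $(n,u)$ together with bounded multipliers, and $\eps\pa_3\phi$ has already been treated above.

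The main obstacle is the inversion of this $2\times 2$ system, whose determinant is $(u_a+u)_3^2 - T^i$. On the support of the cut-off $\chi$ introduced in \eqref{apriori2}, the subsonic a priori bound gives $|(u_a+u)_3|\le\sqrt{3T^i/4}$, so the determinant stays uniformly below $-T^i/4$ and the system inverts cleanly, expressing $\eps\pa_3 n$ and $\eps\pa_3 u_3$ as combinations of $\eps\pa_3\phi$ and conormal derivatives. Off the support of $\chi$, one has $x_3\ge\delta>0$ so $\ZZ_3 = \eps\frac{x_3}{1+x_3}\pa_3$ is pointwise comparable to $\eps\pa_3$, and normal derivatives are controlled directly by conormal ones. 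Gluing the two regions by a partition of unity, applying $(\eps\pa_3)^{k-1}\ZZ^\alpha$ to the resulting expressions, estimating commutators and nonlinear factors via Lemma \ref{lemprod}, and collecting the remainder contributions through \eqref{estirestes} closes the induction. The characteristic nature of the boundary is precisely what forces the introduction of $\omega$ in $Q_m$: without the vorticity term, there would be no way to recover $(\eps\pa_3)u_y$ from the remaining controls.
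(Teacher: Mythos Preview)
Your proposal is correct and follows essentially the same induction scheme as the paper: recover $(\eps\pa_3)u_y$ from the vorticity, $(\eps\pa_3)^2\phi$ from the Poisson equation, and $(\eps\pa_3)(n,u_3)$ from the $2\times 2$ normal system, then iterate.

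The only organizational difference is how the subsonic condition is exploited. The paper does not argue by partition of unity; instead it builds the cutoff $\chi$ directly into the coefficient matrix, writing
\[
A_n = -\begin{pmatrix} n_a+n & \chi(u_a+u)_3 \\ \chi(u_a+u)_3 & T^i(n_a+n)^{-1} \end{pmatrix},
\]
and moves the remaining transport piece $(1-\chi)(u_a+u)_3\,\pa_3$ to the right-hand side, where it is pointwise equivalent to a $\ZZ_3$ derivative. This makes $A_n$ globally invertible by \eqref{apriori2} and avoids the commutator terms $[\ZZ^\alpha,\chi]$ that your gluing produces. Your route works too (those commutators are harmless, of size $O(\eps/\delta)$), but the paper's packaging is cleaner. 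A minor slip: for $\eps\pa_3\phi$ at level $k$ you need $(\eps\pa_3)^{k+1}\phi$, hence $(\eps\pa_3)^{k-1}$ (not $k-2$) applied to the Poisson right-hand side; the tangential Laplacian then lands in $\|(\eps\pa_3)^{k-1}(\eps\nabla\phi)\|_{\Hc^{m-k}_{co,\eps}}$, which is exactly the induction hypothesis.
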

Note that we can reformulate the above Proposition into 
\begin{equation}
\label{HmQm}
 \| (n,u, \phi, \eps \nabla \phi)(t) \|_{\Hc^m_{\eps}(\R^3_{+})} \leq C[C_{a}, M] \big( \eps^{K+1} + Q_{m}(t)\big), \quad \forall t \in [0, T^\eps).
\end{equation}

\begin{proof}
 By using the definition of the vorticity, we already have:
$$ \|\eps \partial_{3}(u_{1}, u_{2}) \|_{\Hc^{m-1}_{co,\eps}} \lesssim \|\omega \|_{\Hc^{m-1}_{co, \eps}} + \|u\|_{\Hc^m_{co,\eps}} \lesssim Q_{m}(t).$$
Next, we notice that 
$$\|\eps \partial_{3} \na \phi \|_{\Hc^{m-1}_{co,\eps}} \lesssim  \|\eps \na \phi \|_{\Hc^m_{co,\eps}} + \| (\eps \pa_3)^2  \phi \|_{\Hc^{m-1}_{co,\eps}}. $$
By using the Poisson equation to express $\eps^2 \partial_{33}  \phi$, together with  Lemma \ref{lemprod} to control the nonlinearity,  we  get that
\begin{align*}
\| \eps \pa_{3} (\eps \nabla \phi)(t)\|_{\Hc^{m-1}_{co, \eps}} & \leq  C\left[C_{a}, M, \|\phi\|_{L^\infty(\R^3_+)}, \eps^{-3/2} \| \phi \|_{\Hc^{m-1}_{\eps}(\R^3_+)}\right]   \big( \eps^{K+1}+ Q_{m}(t)\big) \\
&  \leq  C\left[C_a, M, \eps^{r-3/2}\right]   \big( \eps^{K+1}+ Q_{m}(t)\big).
\end{align*}
Finally, by using the equations on $n$ and $u_3$ in  \eqref{EPP}, we get that 
\begin{equation}
\label{systnorm}
\begin{aligned}
&  A_{n} \left( \begin{array}{ll} \partial_{3} u_{3} \\ \partial_{3} n  \end{array} \right)   = \\
& \left( \begin{array}{ll} \partial_{t}n  + (u_{a}+ u)_{1,2} \cdot \nabla_{1,2} n  + (1 - \chi)(u_a + u)_{3}\pa_{3} n + u \cdot \nabla n_{a} + n\, \div u_{a}  + (n+n_a) \na_{1,2} \cdot(u_1,u_2) - \eps^K R_{n} \\ \partial_{t} u_{3} + (u_{a}+ u)_{1,2} \cdot \nabla_{1,2}u_{3} +  (1 - \chi)(u_a + u)_{3}\pa_{3} u_3 - T^i { (\pa_{3} n_{a} n) \over n_{a} (n_{a}+ n) } + \pa_{3} \phi  - \eps^K R_{u} \end{array}\right)
\end{aligned}
 \end{equation}
 where $A_{n}$ is defined by 
 $$ A_{n}= - \left( \begin{array}{cc} n+ n_{a} & \chi(u_{a} + u)_3 \\ \chi(u_{a}+ u)_3 &   T^i(n^a + n)^{-1} \end{array} \right).$$ 
 Note that thanks to \eqref{apriori2}, we get that
 $A_{n}$ is invertible, its inverse being given by: 

 $$
 A_n^{-1}= \frac{1}{T^i - [\chi (u_a +u)_3]^2}\left( \begin{array}{cc}T^i(n^a + n)^{-1}& -\chi(u_{a} + u)_3 \\ -\chi(u_{a}+ u)_3 & n_a + n   \end{array} \right).
 $$
 Hence, we can use this system to estimate $ \|\eps\,\pa_{3}(n, u_{3})(t)\|_{\Hc^{m-1}_{co, \eps}(\R^3_{+})} $. Note also that  the field $ \eps (1- \chi) \partial_{3}$ is equivalent  to $\ZZ_{3}$. One can again decompose the terms into

{\bf i)} linear terms, whose coefficients depend on $u_a, \na u_a, n_a, \eps \na n_a$ and involve $(u,n)$ or $\ZZ_i (u,n)$.  
 
  They are bounded by  $\displaystyle  C[C_a,M]  \| (n,u) \|_{\Hc^m_{co, \eps}} $. 
    
{\bf ii)} nonlinear terms: they all involve  products of the type  $\displaystyle F(x,n,u) \,  \ZZ_i (n,u)$, where the  function $F$ satisfies   $F(x,0,0) = 0$. They can be estimated with  Lemma \ref{lemprod}  ($k=0$, $|\alpha|=m-1$).  

\medskip

We obtain 
 \begin{multline}
 \|\eps\,\pa_{3}(n, u_{3})(t)\|_{\Hc^{m-1}_{co, \eps}(\R^3_{+})} 
 \leq C_1 \left(\| (n,u) \|_{\Hc^m_{co,\eps}} + \| \eps \na \phi \|_{\Hc^{m-1}_{co, \eps}(\R^3_{+})} \: + \: \eps^{K+1} \right) \\
  \: + \: C_2 \left(   \| (n,u) \|_{\Hc^m_{co,\eps}}  \: + \:    \|(\eps \pa_3) (n,u) \|_{\Hc^{m-1}_{co,\eps}} \right)
 \end{multline} 
 with
 $$C_1 = C_1\left[ C_a, M,  \| (n,u) \|_{L^\infty(\R^3_+)},  \| \ZZ (n,u) \|_{L^\infty(\R^3_+)} \right] $$
 and with 
 $$ C_2 = C_2\left[ C_a, M,  \| (n,u) \|_{L^\infty(\R^3_+)},  \| \ZZ (n,u) \|_{L^\infty(\R^3_+)}, \eps^{-3/2} \| (n,u) \|_{\Hc_\eps^m(\R^3_+)} \right] $$
 that vanishes with its last argument. We used here the notation $\ZZ f = (\ZZ_{i}f)_{0 \leq i \leq 3}$. 
 
 Now, from the $L^\infty$ estimates and \eqref{Hcmeps} in Proposition  \ref{propequiv1} and from the definition of $T_\eps$, it follows that 
\begin{multline*}
 \|\eps\,\pa_{3}(n, u_{3})(t)\|_{\Hc^{m-1}_{co, \eps}(\R^3_{+})} 
 \leq C'_1[\eps^{r-3/2}] \left(\| (n,u) \|_{\Hc^m_{co,\eps}(\R^3_{+})} + \| \eps \na \phi \|_{\Hc^{m-1}_{co, \eps}(\R^3_{+})} \: + \: \eps^{K+1} \right) \\
  + \: C'_2[\eps^{r-3/2}] \left(   \| (n,u) \|_{\Hc^m_{co,\eps}}  \: + \:    \|(\eps \pa_3) (n,u) \|_{\Hc^{m-1}_{co,\eps}} \right) 
 \end{multline*} 
 where $C'_2$  vanishes with its argument. As $r > 3/2$, we can absorb the last term at the r.h.s  in the l.h.s for $\eps$ small enough: this yields 
   \begin{equation}  \label{dn1} 
 \|\eps\,\pa_{3}(n, u_{3})(t)\|_{\Hc^{m-1}_{co, \eps}(\R^3_{+})} 
 \lesssim \left(\| (n,u) \|_{\Hc^m_{co,\eps}} + \| \eps \na \phi \|_{\Hc^{m-1}_{co, \eps}(\R^3_{+})} \: + \: \eps^{K+1} \right) \: \lesssim \: \eps^{K+1} + Q_m(t).  \end{equation}

We can then prove Proposition \ref{propdn} by induction on $k$.
 
The estimate for $k=1$ is thus  already proven.
Let us assume that it is proven for $j \leq k$. At first,  from the definition of the vorticity, we find that
 $$ \|(\eps \partial_{3})^{k+1}(u_{1}, u_{2}) \|_{\Hc^{m-(k+1)}_{co,\eps}} \lesssim \|(\eps\partial_{3})^{k}\omega \|_{\Hc^{m-(k+1)}_{co, \eps}} + 
  \|(\eps \partial_{3})^k u\|_{\Hc^{m-k}_{co,\eps}} $$
  and the right hand side is already estimated thanks to the definition of $Q_{m}$ and the induction assumption.
   In a similar way, by  using the Poisson equation, we get that 
   $$ \|(\eps \partial_{3})^{k+1}  \eps \nabla  \phi \|_{\Hc^{m-(k+1)}_{co,\eps}} 
    \leq C[C_a, M, \eps^{r-3/2}] \big( \eps^{K+1} +  \sum_{j=0}^k \| (\eps \partial_{3})^j (n, \phi, \eps \nabla \phi ) \|_{\Hc^{m-j}_{co,\eps}}\big)$$
    and  we conclude again by the induction assumption.  Finally, to estimate $\|(\eps \partial_{3})^{k+1} (n,u_{3})\|_{\Hc^{m-(k+1)}_{co,\eps}}$,
     we use again \eqref{systnorm},  Lemma \ref{lemprod} and the induction assumption.

\end{proof}

\subsubsection{Main energy estimate}\label{MEE}

 We shall finally estimate $Q_{m}$ by  carefully using the  system \eqref{EPP} and the linear stability estimate of  Proposition \ref{propL2}.
 
 Let us before  state a useful commutator estimate:
 \begin{lemma}
For $i=1, \, 2, \, 3$, we have the estimate:
\label{lemcom}
\begin{multline}\| [\ZZ^\alpha, f  ] \partial_{i}g \|_{L^2(\R^3_{+})} \lesssim  C\big[ \|(\nabla_{t,x}f, \partial_{i} g) \|_{L^\infty(\R^3_+)}, 
 \eps^{-{5\over 2}} \|f\|_{\Hc^{|\alpha|}_{\eps}(\R^3_{+})} \big] \\
 \big(  \|f\|_{\Hc^{|\alpha|}_{\eps}(\R^3_{+})} +  \|g\|_{\Hc^{|\alpha|}_{\eps}(\R^3_{+})}
 \big).\end{multline}
\end{lemma}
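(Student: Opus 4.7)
The plan is to apply the Leibniz rule to $\ZZ^\alpha(f\,\partial_i g)$ and split the resulting sum according to how many of the vector fields of $\ZZ^\alpha$ fall on $f$. Writing
$$\ZZ^\alpha(f\,\partial_i g) = \sum_{\beta\le \alpha}\binom{\alpha}{\beta}\ZZ^\beta f \cdot \ZZ^{\alpha-\beta}\partial_i g$$
up to commutators of the $\ZZ_j$'s with $\partial_i$, I would first dispose of these commutators: the only nonzero one is $[\ZZ_3, \partial_3] = \eps(1+x_3)^{-2}\partial_3$, which replaces $\partial_i$ by a bounded multiple of $\eps\partial_3$, an elementary building block of the $\Hc^{|\alpha|}_\eps$ norm. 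These auxiliary contributions can therefore be absorbed into the right hand side, and only the principal sum with $1\le |\beta|\le |\alpha|$ needs to be estimated.

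The two endpoint contributions are handled by placing one factor in $L^\infty$. For $|\beta|=|\alpha|$, I put $\partial_i g$ in $L^\infty$ and use $\|\ZZ^\alpha f\|_{L^2}\lesssim \|f\|_{\Hc^{|\alpha|}_\eps}$, which is immediate from the definition \eqref{defvectorfields} and the boundedness of $x_3/(1+x_3)$. For $|\beta|=1$, I combine $\|\ZZ_j f\|_{L^\infty}\lesssim \eps\|\nabla_{t,x}f\|_{L^\infty}$ with
$$\|\ZZ^{\alpha-\beta}\partial_i g\|_{L^2}\lesssim \eps^{|\alpha|-1}\sum_{|\gamma|\le |\alpha|-1}\|\partial^\gamma \partial_i g\|_{L^2}\lesssim \eps^{-1}\|g\|_{\Hc^{|\alpha|}_\eps},$$
the extra $\eps^{-1}$ arising because $\partial_i g$ carries one derivative that is \emph{not} premultiplied by $\eps$. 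The two endpoint terms are therefore bounded by $\|\partial_i g\|_{L^\infty}\|f\|_{\Hc^{|\alpha|}_\eps}$ and $\|\nabla_{t,x} f\|_{L^\infty}\|g\|_{\Hc^{|\alpha|}_\eps}$ respectively.

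For the intermediate indices $2\le |\beta|\le |\alpha|-1$ neither factor can be placed in $L^\infty$ and I would invoke the product estimate \eqref{prod0}. After the rescaling $(t,x)\mapsto(t/\eps,x/\eps)$ it reads
$$\|\ZZ^\beta f\cdot \ZZ^{\alpha-\beta}\partial_i g\|_{L^2} \lesssim \eps^{-3/2}\|\ZZ^\beta f\|_{H^{s_1}(\R^3_+)}\cdot\|\ZZ^{\alpha-\beta}\partial_i g\|_{H^{s_2}(\R^3_+)}$$
with $s_1+s_2 = 3/2$ and $s_1,s_2\in(0,1)$, which is possible precisely because $|\beta|\ge 2$ and $|\alpha-\beta|+1\ge 2$. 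Rewriting these Sobolev norms in terms of the $\eps$-weighted norms and paying one further $\eps^{-1}$ for the bare $\partial_i$, the contribution is bounded by $\eps^{-5/2}\|f\|_{\Hc^{|\alpha|}_\eps}\|g\|_{\Hc^{|\alpha|}_\eps}$, which is exactly the term accounted for by the second argument of $C$ in the statement.

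The main obstacle in this argument is the careful bookkeeping of $\eps$-weights, in particular the extra $\eps^{-1}$ generated by the unweighted derivative $\partial_i g$; together with the $\eps^{-3/2}$ loss from the three-dimensional Sobolev embedding $H^{3/2}(\R^3)\not\hookrightarrow L^\infty$ provided by \eqref{prod0}, this is what produces the exponent $-5/2$ sitting in the second slot of $C$. Summing over $\beta$ and collecting the three kinds of bounds then yields the claimed inequality.
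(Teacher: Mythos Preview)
Your argument is correct and follows the same route as the paper: expand by Leibniz, put the factor with fewest derivatives in $L^\infty$ at the two endpoints $|\beta|=1$ and $|\beta|=|\alpha|$, and for the intermediate terms invoke the bilinear product estimate \eqref{prod0} with exponents $s_1,s_2\in(0,1)$, then reconvert to $\eps$-weighted norms; the $\eps^{-3/2}$ from the Sobolev product and the extra $\eps^{-1}$ from the bare $\partial_i$ combine to the $\eps^{-5/2}$ in the statement. The paper's own proof says exactly this in two lines, deferring the details to the proofs of Lemmas~\ref{lemprodt} and~\ref{lemprod} and only singling out the $|\beta|=1$ term (where $\ZZ f$ goes into $L^\infty$ and its $\eps$ is transferred to $\ZZ^{\alpha-\beta}\partial_i g$), which is precisely your second endpoint.
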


\begin{proof}
To get this lemma, it suffices to combine the Leibnitz formula and \eqref{prod0} as in the proof of Lemma \ref{lemprodt} and Lemma \ref{lemprod}.
The only term that we handle in a different way is when all the derivatives but one are on $\pa_{i}g$, in this case, we write
$$ \| \ZZ f  \,\ZZ^{|\alpha|-1} \partial_{i} g \|_{L^2(\R^3_{+})} \leq  \| \nabla _{t,x } f \|_{L^\infty(\R^3_{+})} \| \eps \ZZ^{|\alpha|-1} \partial_{i}g \|_{L^2(\R^3_{+})}
 \lesssim   \| \nabla _{t,x } f \|_{L^\infty(\R^3_{+})}  \|g\|_{\Hc^{|\alpha|}_{\eps}(\R^3_{+})}.$$

\end{proof}

Our main energy estimate for \eqref{EPP} is the following:

\begin{proposition}
\label{propQm}
There exists $\eps_{0}>0$ such that for every $\eps \in (0, \eps_{0})$, we have the estimate
$$Q_{m}(t)^2  \leq C(C_{a}, M) \big( \eps^{2K}+ T \eps^{2K}+ \int_{0}^t Q_{m}^2 \big), \quad \forall t \in [0, T^\eps).$$
 \end{proposition}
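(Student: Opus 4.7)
The plan is to reduce the bound on $Q_m$ to an $L^2$ estimate for each conormally-differentiated unknown via Proposition \ref{propL2}, combined with a separate vorticity estimate. For each multi-index $\alpha$ with $|\alpha|\le m$, apply $\ZZ^\alpha$ to the nonlinear system \eqref{EPP}. Since $\ZZ_0,\ZZ_1,\ZZ_2$ preserve the boundary conditions $u_3\vert_{x_3=0}=0$ and $\phi\vert_{x_3=0}=0$, and $\ZZ_3$ vanishes at the boundary, the commuted unknowns $(\np,\up,\fp):=(\ZZ^\alpha n,\ZZ^\alpha u,\ZZ^\alpha\phi)$ are admissible for the linearized system \eqref{linear} (with the choice $h=h_0$), with a right-hand side $(r_n,r_u,r_\phi)$ consisting of commutators of $\ZZ^\alpha$ with the coefficients and drifts, plus $\ZZ^\alpha$ applied to the remainders $\eps^K R_{n,u,\phi}$. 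The identity
\[ T^i\Bigl(\frac{\nabla n}{n_a+n}-\frac{\nabla n_a}{n_a}\frac{n}{n_a+n}\Bigr)=T^i\nabla\ln\Bigl(1+\frac{n}{n_a}\Bigr) \]
shows that the pressure-like term is a pure gradient, which is used both to fit the structure of \eqref{linear} after commutation and in the vorticity estimate below.

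The main work is to control the commutator sources uniformly in $\eps$. Using Lemmas \ref{lemprod} and \ref{lemcom}, the bounds \eqref{boundnau}--\eqref{mieux2phi} on the approximate solution, and the a priori smallness \eqref{equiv1}, one shows $\|r_n\|_{L^2}+\|r_u\|_{L^2}+\|\eps^{-1}r_\phi\|_{L^2}+\|\partial_t r_\phi\|_{L^2}\le C(C_a,M)\bigl(\eps^K+Q_m(t)\bigr)$. For the last two contributions the point is that every commutator term entering $r_\phi$, namely $\eps^2[\Delta,\ZZ^\alpha]\phi$ and $[\ZZ^\alpha,e^{-\phi_a}(1+h_0(\phi))]\phi$, carries at least one hidden factor of $\eps$ coming from the $\eps^2$ in front of the Laplacian and from the boundedness of $\eps\partial_3\phi_a$; this compensates both the $\eps^{-1}$ weight and the ordinary (not $\eps$-weighted) time derivative. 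The hypothesis \eqref{hypmin} of Proposition \ref{propL2} follows from \eqref{apriori2} together with \eqref{equiv1}. Applying Proposition \ref{propL2} to each $(\np,\up,\fp)$, summing over $|\alpha|\le m$, and bounding the initial datum by $C[C_a,M]\eps^K$ thanks to Lemma \ref{leminit} yields the required estimate for the $\mathcal{H}^m_{co,\eps}$ part of $Q_m$.

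For the vorticity, taking the curl of the momentum equation in \eqref{EPP} annihilates both $\nabla\phi$ and the pressure gradient, leaving a transport-type equation of the form
\[ \partial_t\omega+(u_a+u)\cdot\nabla\omega=G+\eps^{K+1}\,\mathrm{curl}\,R_u, \]
where $G$ is a polynomial in $\omega$, $\eps\nabla u$, $\nabla u_a$ and $\nabla^2 u_a$ with coefficients that are \emph{uniformly} bounded in $\eps$ (no singular normal derivative of $n_a$ appears). Applying the full weighted derivatives $(\eps\nabla_{t,x})^\beta$ for $|\beta|\le m-1$ and performing a standard $L^2$ energy estimate, then using \eqref{HmQm} from Proposition \ref{propdn} to convert $\mathcal{H}^m_{co,\eps}$ bounds into $\mathcal{H}^m_\eps$ bounds on the right-hand side, gives the same control for $\|\omega\|_{\mathcal{H}^{m-1}_\eps}^2$ and hence closes the inequality on $Q_m^2$. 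The main obstacle throughout the argument is the handling of the commutators involving $\nabla n_a/n_a$, which is of size $1/\eps$ in the normal direction because of the boundary layer: this singularity is tamed by the conormal weight $x_3/(1+x_3)$ inside $\ZZ_3$, which exactly cancels the $1/\eps$ scaling of a boundary-layer profile (so that $\ZZ n_a$ stays bounded in $L^\infty$), while the rewriting of the pressure as a pure gradient ensures that the vorticity equation never sees this singular coefficient at all.
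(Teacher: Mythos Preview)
Your outline follows the paper's proof closely: apply $\ZZ^\alpha$ to \eqref{EPP}, feed the resulting system into Proposition \ref{propL2}, control the commutator sources via Lemmas \ref{lemprod}--\ref{lemcom} and the cancellation $\ZZ(\nabla n_a)=O(1)$, bound the initial data by Lemma \ref{leminit}, and treat $\omega$ separately by exploiting that the pressure term is the exact gradient $T^i\nabla\ln(1+n/n_a)$ so that no singular $\partial_3 n_a$ survives the curl.

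One point needs correction. You write that the commuted Poisson equation fits \eqref{linear} ``with the choice $h=h_0$''. This is only true for $|\alpha|=0$. For $|\alpha|\ge 1$, differentiating $e^{-\phi_a}(e^{-\phi}-1)$ produces, at leading order, $-e^{-\phi_a}e^{-\phi}\ZZ^\alpha\phi=-e^{-\phi_a}(1+h_1(\phi))\ZZ^\alpha\phi$, so the correct choice is $h=h_1$; this is precisely why Proposition \ref{propL2} is stated for $h\in\{h_0,h_1\}$. If you insist on $h_0$ for $|\alpha|\ge 1$, the mismatch $e^{-\phi_a}(h_1-h_0)(\phi)\,\ZZ^\alpha\phi$ lands in $r_\phi$, and then $\partial_t r_\phi$ contains $\phi\,\partial_t\ZZ^\alpha\phi$, i.e.\ $m{+}1$ conormal derivatives of $\phi$, which you cannot absorb with the tools at hand. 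With the correct $h$ this term is absent and the $\partial_t r_\phi$ estimate closes via elliptic regularity for the commuted Poisson equation, as in the paper.
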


\begin{proof}

 We shall first estimate $\|(n,u, \phi, \eps \nabla \phi )\|_{\Hc^m_{co, \eps}(\R^3_{+})}.$
  To this end, we apply the operator $\ZZ^\a$ to the system \eqref{EPP}. We obtain:
\begin{equation} \label{linear2}
\left\{
\begin{aligned}
 \pa_t  \ZZ^\a n +  (u_{a}+ u )\cdot \nabla \ZZ^\a n + (n_{a}+ n) \div \ZZ^\a u  +   \ZZ^\a u \cdot \nabla( n_a + n) + \ZZ^\a n \, \div(u+u_{a}) &  \\ 
  =\mathcal{C}_{n} 
 + \eps^K \ZZ^\alpha R_{n} & , \\ 
 \pa_t \ZZ^\a u + (u_a + u ) \cdot \na \ZZ^\a u  + \ZZ^\a u  \cdot \na u_a + T^i   \left(\frac{\na \ZZ^\a n}{n_a + n} - \frac{\na n_a}{n_a} \left( \frac{\ZZ^\a n}{n_a + n} \right) \right)  \\
 =  \na \ZZ^\a \phi + \mathcal{C}_u + \eps^K \ZZ^\a R_{u} & , 
 \\
 \eps^2 \Delta \ZZ^\a \phi = \ZZ^\a n + e^{-\phi_a} \ZZ^\a \phi( 1  + h)  + \mathcal{C}_\phi+ \eps^{K+1} \ZZ^\a R_{\phi}& .
\end{aligned}
\right.
\end{equation}
One has $h = h_0$  for $|\alpha| = 0$ and $h=h_1$ for $|\alpha| \ge 1$. The functions $\mathcal{C}_n,\mathcal{C}_u$ and $\mathcal{C}_\phi$ are remainders due to commutators. 
One can observe that this corresponds to the ``abstract'' system that we have studied in Proposition \ref{propL2}.
We shall  now  estimate the remainders  in order to be able to apply the stability estimate \eqref{esti}.

\medskip
We first claim that 
\begin{equation}
  \label{estcom1}
 \|(\mathcal{C}_{n}, \mathcal{C}_{u}) \|_{L^2} \leq C[C_{a},  M,  \eps^{r -s- 1}] \|(n,u, \phi, \eps \nabla \phi) \|_{\Hc^m_{\eps}(\R^3_{+})}
  \leq C[C_{a}, M] \big( \eps^{K+1}+ Q_{m}(t) \big). 
  \end{equation}
This estimate is in the same spirit as the previous ones.  One must  distinguish between  linear terms  and  nonlinear terms, the latter being controlled thanks to the product  estimates of the previous lemma. For brevity, we only point out two typical terms: 
\begin{itemize}
\item a typical linear term in $\mathcal{C}_{n}$ is $[\na n_a ; \ZZ^\alpha] \cdot  u$. It is {\it a priori} singular (because of the boundary layer), but noticing that 
$$\| [\ZZ, \nabla n_{a}] f \|_{L^2} \leq C_{a}  \|f\|_{L^2} $$
gives the bound $\| [\na n_a, \ZZ^\alpha] \cdot  u \|_{L^2(\R^3_+)} \le C_a \| u \|_{\Hc_\eps^{m}(\R^3_+)} \le C[C_a,M] \left( \eps^{K+1} + Q_m(t) \right)$. 
\item a typical nonlinear term in  $\mathcal{C}_u$ is the commutator
 \begin{equation*} 
 T^i \, [F(n_a,n)  \na ;  \ZZ^\alpha ] n  \: = \:    T^i[F(n_a,n) ;  \ZZ^\alpha ] \na n \: + \: T^i F(n_a,n)  [ \ZZ^\alpha ; \na ] n   
 \end{equation*}
 with $ F(n_a,n) := \frac{1}{n_a+n} - \frac{1}{n_a}$.  
 Clearly 
$$  \|  F(n_a,n) [ \ZZ^\alpha ; \na ] n \|_{L^2(\R^3_+)} \: \le \: \: C_M  \| n \|_{\Hc_\eps^{|\alpha|}(\R^3_+)} $$
 As regards the first term at the r.h.s., we use the commutator estimate of Lemma \ref{lemcom}:  
 \begin{multline*}
\| [F(n_a,n) ;  \ZZ^\alpha ] \na n \|_{L^2(\R^3_+)}  \le C\left[\| \na_{t,x} F(n_a,n), \na_x n \|_{L^\infty(\R^3_+)}, \eps^{-5/2}  \| F(n_a,n) \|_{\Hc^m_{\eps}(\R^3_{+})} \right] \\
 \left( \| F(n_a,n) \|_{\Hc^m_{\eps}(\R^3_{+})} \: + \: \| n \|_{\Hc^m_{\eps}(\R^3_{+})} \right).
\end{multline*}
The product estimate \eqref{prodt2} implies that 
$$ \| F(n_a,n) \|_{\Hc^m_{\eps}(\R^3_{+})}  \: \le \: C\left[C_a, M, \| n \|_{L^\infty(\R^3_+)}, \eps^{-3/2}  \| n \|_{\Hc^m_{\eps}(\R^3_{+})} \right]  \| n \|_{\Hc^m_{\eps}(\R^3_{+})},
$$
so that eventually 
$$ \| T^i \, [F(n_a,n)  \na ;  \ZZ^\alpha ] n \|_{L^2(\R^3_+)} \: \le \: C[C_a, M, \eps^{r-5/2}]  \| n \|_{\Hc_\eps^{m}(\R^3_+)}  \le C[C_a,M] \left( \eps^{K+1} + Q_m(t) \right). $$
\end{itemize}
The treatment of the other terms is left to the reader. 

\medskip
 For the commutator $\mathcal{C}_{\phi}$, we  get that
  $$
 \|  \mathcal{C}_{\phi} \|_{L^2} \leq C[C_{a},   \eps^{r-3/2}] \eps \| \phi \|_{\Hc^{m-1}_{\eps}(\mathbb{R}^3_{+})}
   + \eps \left( \| \eps \nabla \phi \|_{\Hc^m_{co,\eps}(\R^3_{+})} + \| \eps \nabla \phi \|_{\Hc^{m-1}_{\eps}(\R^3_{+})} \right).$$
  The first term at the r.h.s corresponds to the commutator with the semilinear term in the Poisson equation. The second one bounds the commutator $[\eps^2 \Delta ; \ZZ^\alpha] \phi$. 
  We use  the  fact that
   $$  [\ZZ_{3}, \partial_{3}] = - \partial_{3} \left( { x_{3} \over 1+ x_{3} } \right) \eps \partial_{3}.$$
   Consequently, by using again Proposition \ref{propequiv1} and Proposition \ref{propdn}, we get that
   \begin{equation}
   \label{estcom2}
 \eps^{-1} \|  \mathcal{C}_{\phi} \|_{L^2} \leq  C(C_{a},  M,  \eps^{r-3/2}) \big( \eps^{K+1} + Q_{m}\big).
   \end{equation}
 By using similar arguments, we also get that
 $$  \| \partial_{t}  \mathcal{C}_{\phi} \|_{L^2} \leq C(C_{a}, \eps^{r-3/2})  \| \phi \|_{\Hc^{m}_{\eps}(\mathbb{R}^3_{+})}
  +    \|\eps^2  \nabla^2 \phi \|_{\Hc^{m}_{co, \eps}(\R^3_{+})} + \| \eps \na \phi \|_{\Hc^{m}_{\eps}(\mathbb{R}^3_{+})}.$$
  To estimate the second term, we use the elliptic regularity for the Poisson equation in \eqref{linear2}. This yields
  $$  \eps^2 \| \nabla^2 \phi \|_{\Hc^{m}_{co,\eps}(\R^3_{+})} \lesssim  \|n \|_{\Hc^m_{\eps}} +  C[C_{a},  \eps^{r-3/2}]  \| \phi \|_{\Hc^{m}_{\eps}(\mathbb{R}^3_{+})} + \| \mathcal{C}_{\phi}\|_{L^2} + \eps^{K+1}$$
  and hence, we find that
  \begin{equation}
  \label{estcom3}
    \| \partial_{t}  \mathcal{C}_{\phi} \|_{L^2} \leq C[C_{a}, M,   \eps^{r-3/2}]\big( \eps^{K+1} +  Q_{m} \big).
  \end{equation}
Note that thanks to Proposition \ref{propequiv1}, we have that on $[0, T^\eps)$, 
$$ \| (n,u, \phi) \|_{L^\infty(\R^3_{+})} \leq C\eps^{r- 3/2} $$
and hence that the assumption \ref{hypmin}  in Proposition \ref{propL2} is  matched on $[0, T^\eps)$
for $\eps$ sufficiently small. Consequently, we can use Proposition \ref{propL2} and Lemma \ref{leminit}  for the system \eqref{linear2} to get 
 \begin{align*}
 & \big\| \big( n,u, \phi, \eps \nabla \phi) (t) \big\|_{\Hc^m_{co,\eps}(\R^3_{+})}^2 
   \leq   C[M]\Big(   \eps^{2K}  + \int_{0}^t \big\| (\eps^{-1}r_{\phi}, \pa_{t} r_{\phi},  r_{n}, r_u\big)\big\|_{L^2(\R^3_{+})}^2 \Big) \\
     & +    C[C_{a}, M]   \int_{0}^t  \big( 1 +   \big\| \nabla_{t,x}\big( n,u,\phi)\big\|_{L^\infty(\R^3_{+})}  +\eps^{-1} \| n\|_{L^\infty(\R^3_{+})}\big)  \big\| \big( n,u, \phi, \eps \nabla \phi\big)) \big\|_{\Hc^m_{co,\eps}(\R^3_{+})}^2 \Big)
  \end{align*}
  where we have set
 $$ r_{n}= \mathcal{C}_{n} + \eps^{K} \ZZ^\alpha R_{n}, \quad  r_{u}= \mathcal{C}_{u} +  \eps^K\ZZ^\alpha R_{u}, \quad 
  r_{\phi}= \mathcal{C}_{\phi} + \eps^{K+1} \ZZ^\alpha R_{\phi}.$$
  Consequently, by using the estimates \eqref{estirestes}, \eqref{estcom1}, \eqref{estcom2},  \eqref{estcom3} and Proposition \ref{propequiv1}, we get that on $[0,T_\eps]$
   \begin{align}
   \label{estQm1}
  \big\| \big( n,u, \phi, \eps \nabla \phi) (t) \big\|_{\Hc^m_{co,\eps}(\R^3_{+})}^2 
  &  \leq   C(C_{a}, M, \eps^{r-5/2})\Big(   \eps^{2K} +  \eps^{2 K}  t  + \int_{0}^t  Q_{m}(t)^2 \Big).
    \end{align}

\bigskip

It remains to estimate $\| \omega\|_{\Hc^{m-1}_{\eps}(\R^3_{+})}$ i.e. the second term in the definition of  $Q_{m}$.
By applying the operator $\eps \nabla \times$ to the second equation  in the system \eqref{EPP}, we find that 
\begin{equation}
\label{omega}
\partial_{t} \omega + (u_{a}+ u ) \cdot \nabla \omega  =  \eps  \mathcal{P}(u_{a},  \nabla u_{a})\big( \nabla u,  \nabla u) - \eps u \cdot \na \omega_a + \eps^K  R_{\omega}
\end{equation}
where $\mathcal{P}(u_{a}, \nabla u_{a})$ is a polynomial of  degree less than two whose coefficients  depend only on $u_{a}$ and
 $\nabla u_{a}$ and thus are uniformly bounded. Note that we have used that
 $$ \nabla \times \Big(  {\nabla n \over n_{a}+ n} - {\nabla n_{a}\over n_{a}} \Big( {n \over n_{a}+ n}\Big) \Big)
  = \nabla \times \nabla  \big( \log(n_{a}+ n) - \log n_{a}\big)=0.$$
  By applying the operator $(\eps \pa)^\alpha= (\eps \pa_{t, x})^\alpha$  to  \eqref{omega}, we find that
  \begin{equation}
  \label{domega} \partial_{t} (\eps \pa)^\alpha \omega + (u_{a}+ u ) \cdot \nabla(\eps \pa)^\alpha \omega  =\mathcal{C}_{\omega}
  \end{equation}
  where by using Lemma \ref{lemprodt}, we have
  $$ \| \mathcal{C}_{\omega}\|_{L^2(\R^3_{+})} \leq C[C_{a}, M, \eps^{r-5/2} ] \big(  \|u\|_{\Hc^m_{\eps}(\R^3_{+})}
   + \|[(\eps \pa)^\alpha ;  u \cdot \nabla ] \omega \|_{L^2(\R^3_{+})} + \eps^K\big).$$
   To estimate the commutator, we use the following variant of Lemma \ref{lemcom}: we first  write
    since $\| \omega \|_{\Hc^{m-1}_{\eps}} \lesssim  \|u\|_{\Hc^{m}_{\eps}}$ that
   $$  \|[(\eps \pa)^\alpha ;  u \cdot \nabla ] \omega \|_{L^2(\R^3_{+})} \leq  \|\na_{t,x}u\|_{L^\infty(\R^3_+)} \| u  \|_{\Hc^{m}_\eps(\R^3_{+})} + \| (\eps \pa)^\alpha u  \, \cdot \nabla \omega \|_{L^2(\R^3_{+})}
    + \eps^{-{5 \over 2} } \|u\|_{\Hc^m_\eps(\R^3_{+})}^2$$
    and to estimate the second term, we use also  \eqref{prod0}, to get  since $| \alpha | \leq m-1$, $m \geq 3$  that 
    $$   \| (\eps \pa)^\alpha u  \, \cdot \nabla \omega \| \lesssim  \eps^{- {3 \over 2} } \| u\|_{\Hc^m_{\eps}(\R^3_{+})} \| \nabla \omega \|_{H^1_{\eps}(\R^3_{+})}
   \lesssim \eps^{-{5 \over 2} } \|u \|_{\Hc^m_{\eps}(\R^3_{+})}^2.$$
   We thus  get  the estimate 
   $$  \| \mathcal{C}_{\omega}\|_{L^2(\R^3_{+})} \leq C[C_{a}, M, \eps^{r-5/2}]  \|u\|_{\Hc^m_{\eps}(\R^3_{+})}.$$
   Consequently, from a standard $L^2$ type energy estimate on the equation \eqref{domega} (let us recall that $(u_{a}+ u)_{3}$ vanishes on the boundary),
    we get that
    $$ {d \over dt } \| \omega \|_{\Hc^{m-1}_{\eps}(\R^3_{+})}^2 
     \leq C[C_{a}, M] \big(  \eps^{2K} + \int_{0}^t \|u \|_{\Hc^m_{\eps}(\R^3_{+})}^2\big)$$
     and hence by using Lemma \ref{leminit} and Proposition \ref{propdn}, we infer that for $t \in [0, T^\eps)$, 
     \begin{equation}
     \label{estomega}
     \| \omega \|_{\Hc^{m-1}_{\eps}(\R^3_{+})}^2  \leq C[C_{a}, M] \big( \eps^{2K}+  \eps^{2K} t + \int_{0}^t Q_{m}^2 \big).
     \end{equation}
     To end the proof of Proposition \ref{propQm}, it suffices to combine  the estimates \eqref{estomega} and \eqref{estQm1}.
     
     \end{proof}

     \subsubsection{Proof of Theorem \ref{main}}
     We can now easily conclude the proof  of  Theorem \ref{main}.
     By using Proposition \ref{propQm} and the Gronwall inequality, we get that
     $$  Q_{m}(t) \leq C[C_{a}, M] \eps^{K} e^{C[C_{a}, M]t}, \quad \forall t \in [0, T^\eps).$$
  This yields   thanks to \eqref{HmQm},
  $$ \|(n,u, \phi, \eps \nabla \phi)(t) \|_{H^m_{\eps}(\R^3_{+})} \leq C[C_{a}, M] \big(  e^{C[C_{a}, M]t} +\eps \big) \eps^K, \quad  \forall t \in [0, T^\eps).$$
   Note that we also have  by Sobolev embedding
   $$ \|(n, \phi)\|_{L^\infty(\R^3_{+})} \leq \eps^{K- 3/2} \big(  e^{C[C_{a}, M] t} +\eps \big), \quad \forall t \in [0, T^\eps)$$
    and 
    $$\big\| \chi(x_{3}) u_{3}\big\|_{L^\infty(\R^3_{+})} \leq  C[\delta] \eps^{K- 5/2} \big(  e^{C[C_{a}, M]t} +\eps \big), \quad \forall t \in [0, T^\eps).$$
     In view of the definition of $T^\eps$,  we obtain that for $\eps$ sufficiently small  $T^\eps\geq T_{0}$ and that
     $$ \|(n,u, \phi, \eps \nabla \phi)(t) \|_{H^m_{\eps}(\R^3_{+})} \leq C[C_{a}, M]\big(  e^{C[C_{a}, M]t} +\eps \big) \eps^K, \quad  \forall t \in [0, T_{0}].$$
     This ends the proof of Theorem \ref{main}.

\section{Further remarks}\label{further}

\subsection{Generalization to general smooth domains}\label{ouvertregulier}

In this paragraph, we briefly explain how the work which has been done for the half-space $\R^3_+$ can be adapted to handle the case of a smooth open set $\Omega \subset \R^3_+$ whose boundary $\partial \Omega$ is an orientable compact manifold.

\medskip

{\bf i)} {\em Derivation of boundary layers.}

The principle is to apply the tubular neighborhood theorem in $\R^3$; then, $\pa \Omega$ being orientable, there exist a  smooth function $\varphi: \, \pa \Omega \rightarrow \R$ (normalized to have $|\na \varphi|=1$) and a neighborhood $U:= \{x \in \Omega, \varphi<\eta\}$ of $\pa \Omega$ in $\Omega$ such that for any $x \in U$, there is one and only one $(\tilde x,y) \in \pa\Omega \times \R^{+*}$, such that
$$
x \, = \tilde x \: + \:  y \,  n_{\tilde x}.
$$
where $ n_{\tilde x}$ is the inward-pointing normal at $\tilde x$.

Then we can look for approximate solutions of the form:
\begin{equation} \label{ansatz2}
\begin{aligned}
(n^\eps_{app},u^\eps_{app}, \phi^\eps_{app})    & \: = \:  \sum_{i=0}^K \eps^i \left( n^i(t,x), u^i(t,x), \phi^i(t,x) \right) \\  & \: + \: \sum_{i=0}^K \eps^i \left( N^i\left(t, \tilde x, \frac{y}{\eps}\right), U^i\left(t, \tilde x, \frac{y}{\eps}\right), \Phi^i\left(t, \tilde x, \frac{y}{\eps}\right)\right).
\end{aligned}
\end{equation}
and the construction of Section \ref{construction} remains the same.

\medskip

{\bf ii)}  {\emph{Stability of the boundary layers}.}

The $L^2$ stability estimate that is based only on integration by parts  is still true. For higher order estimates, 
one can modify the definition of the conormal  Sobolev spaces in the usual way by considering  a finite set  of 
  generators of  vector fields  tangent  to the boundary.

\subsection{Generalization to non homogeneous conditions for $u$}\label{other}

Let us now explain how we could generalize our work to the following boundary condition on $u$:
\begin{equation}
u_3\vert_{x_3=0}= u_b.
\end{equation}

We shall impose a subsonic outflow condition $-\sqrt{T^i}<u_b<0$. One can readily check that the boundary $\{x_3=0\}$ is \emph{noncharacteristic} in this setting and that this boundary condition is maximal dissipative.
From the physical point of view, this boundary condition  means that some plasma is somehow absorbed at the boundary.

\medskip
Let us once again explain what are the main changes in each principal part of the proof. Details are left to the reader.

{\bf i)} {\emph{Derivation of boundary layers}.}

Considering the derivation of the approximation, computations get more tedious, but the results remain roughly unchanged. In particular, we can check that there is still no boundary layer term for the velocity at leading order, which means with the same notations as before, that $U^0=0$. The principle to show this fact is to obtain closed equations on $U^0_y$ and $U^0_3$ which have the trivial solution $0$, as it was done for $U^0_y$ in Section \ref{construction}.

\medskip

{\bf ii)} {\emph{Stability of the boundary layers}.}

The main part which is modified due to the new boundary condition is the proof  of Proposition \ref{propL2}, in which new boundary contributions appear since $(u_a + u)_3\vert_{x_3=0} \neq 0$: we have to treat them carefully.
 Nevertheless, we note that  since the boundary condition for the original problem  is dissipative, the boundary terms when we integrate by parts
 in the transport terms have the  ``good sign''.  For example, 
one typical new harmless term comes for instance in \eqref{gentil}. By integration by parts for the term
$$
-\int_{\R^3_+} \frac{\na (\np^2)  \cdot ( u^a + u)  }{n_a + n}   
$$
the new term that appears is $-\int_{\R^2} \frac{(\np^2)   ( u^a + u)\cdot(-e_3)  }{n_a + n}\vert_{x_3=0}$ and one can observe that $( u^a + u)_3\vert_{x_3=0} \leq 0$, so that this term has indeed the good sign.

One more significant term comes from the computation of $J_1$ in \eqref{J1}. One can check that a first integration by part yields the term $\mathcal{B} := \int_{\R^2} |{\na \fp}|^2 (u_a + u)_3\vert_{x_3=0}$ and a second integration by parts yields the term $-\frac{1}{2} \mathcal{B}$, so that only the sum of these two terms, $\frac{1}{2} \mathcal{B}$, has the good sign.

Furthermore, some terms also have to be estimated more carefully. In \eqref{uno} and \eqref{duo}, the two terms
$$
  - \frac{1}{2} \int_{\R^3_+} \pa_t (e^{-\phi_{a}}(1+h(\phi))) |\fp|^2  - \frac{1}{2}  \int_{\R^3_+}   |\fp|^2 \, \div(e^{-\phi_{a}}(1+ h(\phi))  (u_a + u))
$$
can not be estimated separately as before. They can be treated using the equation satisfied by $e^{-\phi_a}$ (that is the transport equation satisfied by $n_a$, up to some error term).

The end of the proof of Theorem \ref{main} is unchanged, except for the last part which gets actually simpler. Indeed, the boundary $\{x_3=0\}$ is now noncharacteristic and therefore we can estimate all normal derivatives using directly the equations on $(n,u)$, without having to introduce the vorticity.

\medskip

 When $0<u_b<\sqrt{T^i}$ the boundary $\{x_3=0\}$ is still noncharacteristic (this corresponds to some \emph{inflow} condition, which means that some plasma is injected at the boundary) but 
 we still  need to add  two (in dimension $3$)  boundary conditions in order to get a well-posed problem.
  There are many possibilities and some of them do not yield a  dissipative problem. In this case, 
  our approach to the $L^2$ stability estimate of Proposition \ref{propL2} would  break down. We leave this case for future work, as well as the case of supersonic outflow condition, relevant to plasma sheath layer problems.

\subsection{The relative entropy method}

In this paragraph, we focus on the one-dimensional case.
First, we recall that Cordier and Peng constructed global entropy solutions to the same isothermal Euler-Poisson system as the one studied in this paper, posed in the whole line $\R$ (see \cite{CP}). Although it has never been done, it seems reasonable to believe in the existence of some global entropy solution in the half line $\R^+$, at least for the simplest boundary condition $\phi_b =0$ (and non-penetration, that is $u_3\vert_{x_3=0}=0$).

\emph{Assuming} the existence of such solutions with weak regularity, one could study the quasineutral limit, using the so-called relative entropy method which was introduced by Brenier in \cite{Brenier}. This strategy was previously briefly evoked in \cite{HK}. It consists in considering the following modulated {\emph{nonlinear}} energy:
\begin{equation}
\begin{split}
\mathcal{H}_\eps(t) =& \frac{1}{2} \int n\vert  u - u_{app} \vert ^2 dx + T^i \int n(\log \left(n/n_{app}\right)  - 1 + n_{app}/n ) dx  \\+&\int (e^{-\phi}\log \left( e^{-\phi}/ n_{app} \right)  - e^{-\phi} +n_{app})dx  
+ \frac{\eps^2}{2} \int \vert \partial_x \phi \vert^2 dx,
\end{split}
\end{equation}
Then, by an explicit computation (similar to the one given in \cite{HK}) of the derivative in time of $\mathcal{H}_\eps$, the principle is to show that this is a Lyapunov function, and thus that it vanishes as $\eps\rightarrow0$, if $\mathcal{H}_\eps(0)\rightarrow_{\eps \rightarrow 0} 0$. This would roughly proves strong convergence in $L^2$ for $(n_\eps, u_\eps, \phi_\eps)$ without performing high-orders estimates in order to obtain strong compactness, as in the proof of this paper. Notice furthermore that this seems anyway impossible to justify such high-order estimates for solutions with low regularity.

The convergence would still be local in time,  since for this method, one has to consider \emph{strong} solutions (that is with at least Lipschitz regularity) to the limit systems.

\subsection{Two Stream instability}

It seems reasonable to consider the case of several species of ions, say two for the clarity of exposure.
This means that each species, described by $(n_1,u_1)$ (resp. $(n_2,u_2)$) satisfies an isothermal Euler-Poisson system:
\begin{equation} \label{EP2}
\left\{
\begin{aligned}
 & \pa_t n_i \: + \:  \div(n_i u_i) \:  = \: 0, \\
 & \pa_t u_i  \: + \:  u_i \cdot \na u_i  \: + \: T^i \,  \na \ln(n_i)= \na \phi, \\
\end{aligned}
\right. 
\end{equation}
and these systems are coupled through the quasineutral Poisson equation:
\begin{equation}
\label{Poisson2}
\eps^2 \Delta \phi \: + \: e^{-\phi}  = n_1 \: + \: n_2. 
\end{equation}

One could expect to prove similar results to those proved in this paper.
Nevertheless, the situation turns out to be very different in that framework (even without boundary). This is due to the so-called two stream instability. 
We refer to the work of Cordier, Grenier and Guo \cite{CGG}, who studied that mechanism and showed that any initial data such that 
$$
u_{1,0} \neq u_{2,0}
$$
is non linearly unstable, hence the name two stream instability (actually they do not explicitly treat the case of \eqref{Poisson2}, but their study also holds in that case). Instability means here  that there exists some $\alpha_0>0$ such that, for any $\delta>0$ , there exists an initial condition in a ball of radius $\delta$ (for a Sobolev norm with regularity as high as we want) and with center the two-stream initial condition and whose associated solution goes out of the ball of radius $\alpha_0$ in the $L^2$ norm, after some time of order $O(|\log \delta|)$. In other words, there are perturbations of the two-stream data which are arbitrarily small in any Sobolev norm (as high as we want) and which are wildly amplified.

One can then observe that it is possible to go from the system \eqref{EP2}-\eqref{Poisson2} with any $\eps>0$ to \eqref{EP2}-\eqref{Poisson2} with $\eps=1$ thanks to the change of variables $(t,x) \mapsto (\frac{t}{\eps}, \frac{x}{\eps})$. This means that the quasineutral limit can somehow be seen a long time behaviour limit for the Euler-Poisson system.
By extrapolating the two-stream instability results, one can infer that the formal limit for \eqref{EP2}-\eqref{Poisson2} is false in Sobolev regularity for such data. 

On the other hand, one can conjecture that the formal limit is true as soon as we avoid two-stream instabilities, that is to say when:
$$
u_{1,0} = u_{2,0}.
$$


\begin{thebibliography}{99}


\bibitem{AMB}
A.~Ambroso.
\newblock Stability for solutions of a stationary {E}uler-{P}oisson problem.
\newblock {\em Math. Models Methods Appl. Sci.}, 16(11):1817--1837, 2006.

\bibitem{AMR}
A.~Ambroso, F.~M{\'e}hats, and P.~A. Raviart.
\newblock On singular perturbation problems for the nonlinear {P}oisson
  equation.
\newblock {\em Asymptot. Anal.}, 25(1):39--91, 2001.

\bibitem{Brenier}
Y. Brenier.  Convergence of the Vlasov-Poisson system to the incompressible Euler
 equations.
{\em  Comm. Partial Differential Equations  25  (2000),  no. 3-4, 737--754.}

\bibitem{BGS}
H.~Brezis, F.~Golse, and R.~Sentis.
\newblock Analyse asymptotique de l'\'equation de {P}oisson coupl\'ee \`a la
  relation de {B}oltzmann. {Q}uasi-neutralit\'e des plasmas.
\newblock {\em C. R. Acad. Sci. Paris S\'er. I Math.}, 321(7):953--959, 1995.


\bibitem{Chiron-Rousset}
 D. Chiron and F.  Rousset.  Geometric optics and boundary layers for nonlinear-Schr\"odinger
 equations.
 Comm. Math. Phys.  288  (2009),  no. 2, 503--546.

\bibitem{CG}
S.~Cordier and E.~Grenier.
\newblock Quasineutral limit of an {E}uler-{P}oisson system arising from plasma
  physics.
\newblock {\em Comm. Partial Differential Equations}, 25(5-6):1099--1113, 2000.

\bibitem{CGG}
S.~Cordier, E.~Grenier, and Y.~Guo.
\newblock Two-stream instabilities in plasmas.
\newblock {\em {Methods Appl. of Anal.}}, 7(2):391--406, 2000.

\bibitem{CP}
S.~Cordier and Y.J. Peng.
\newblock Syst{\`e}me euler-poisson non lin{\'e}aire. existence globale de
  solutions faibles entropiques.
\newblock {\em M2AN Math. Model. Num. Anal.}, 32(1):1--23, 1998.

\bibitem{CDV}
P.~Crispel, P.~Degond, and M.H. Vignal.
\newblock A plasma expansion model based on the full {E}uler-{P}oisson system.
\newblock {\em {Math. Models Methods Appl. Sci.}}, 17(7):1129--1158, 2007.

\bibitem{FHS}
M.~Feldman, S.-Y. Ha, and M.~Slemrod.
\newblock A geometric level-set formulation of a plasma-sheath interface.
\newblock {\em Arch. Ration. Mech. Anal.}, 178(1):81--123, 2005.

 \bibitem{Gerard-Varet} D. G\'erard-Varet. A geometric optics type approach to fluid boundary layers.
 {\em Comm. Partial Differential Equations  28  (2003),  no. 9-10, 1605--1626.}

\bibitem{Gisclon-Serre}
M.  Gisclon and  D. Serre.
\newblock \'{E}tude des conditions aux limites pour un syst\`eme strictement
  hyberbolique via l'approximation parabolique.
\newblock {\em C. R. Acad. Sci. Paris S\'er. I Math. 319}, 4 (1994), 377--382.


\bibitem{Grenierpseudo} E. Grenier,  Pseudo-differential energy estimates of singular perturbations.
{\em  Comm. Pure Appl. Math.  50  (1997),  no. 9, 821--865.}

\bibitem {Grenier} E. Grenier,   Boundary layers of 2D inviscid fluids from a Hamiltonian
 viewpoint.
{\em Math. Res. Lett.  6  (1999),  no. 3-4, 257--269.}

\bibitem{Grenier-Gues}
{E.  Grenier, E., and  O. Gu{\`e}s.}
\newblock Boundary layers for viscous perturbations of noncharacteristic
  quasilinear hyperbolic problems.
\newblock {\em J. Differential Equations 143}, 1 (1998), 110--146.

\bibitem{Grenier-Masmoudi}
{E.  Grenier, E., and N.  Masmoudi.}
\newblock Ekman layers of rotating fluids, the case of well prepared initial
  data.
\newblock {\em Comm. Partial Differential Equations 22}, 5-6 (1997), 953--975.

\bibitem{Grenier-Rousset}
{E.  Grenier and  F. Rousset}
\newblock Stability of one-dimensional boundary layers by using {G}reen's
  functions.
\newblock {\em Comm. Pure Appl. Math. 54}, 11 (2001), 1343--1385.



\bibitem{Gues}
O.~Gu{\`e}s.
\newblock Probl\`eme mixte hyperbolique quasi-lin\'eaire caract\'eristique.
\newblock {\em Comm. Partial Differential Equations}, 15(5):595--645, 1990.

\bibitem{GP}
Y.~Guo and B.~Pausader.
\newblock {Global Smooth Ion Dynamics in the Euler-Poisson System}.
\newblock {\em Comm. Math. Phys.}, 303:89--125, 2011.

\bibitem{HS}
S.-Y. Ha and M.~Slemrod.
\newblock Global existence of plasma ion-sheaths and their dynamics.
\newblock {\em Comm. Math. Phys.}, 238(1-2):149--186, 2003.

\bibitem{HK}
D.~Han-Kwan.
\newblock {Quasineutral limit of the Vlasov-Poisson equation with massless
  electrons}.
\newblock {\em Comm. Partial Differential Equations}, 36(8):1385--1425, 2011.

\bibitem{JJLL2}
S. Jiang, Q. Ju, H. Li, and Y. Li.
\newblock Quasi-neutral limit of the full bipolar {E}uler-{P}oisson system.
\newblock {\em Sci. China Math.}, 53(12):3099--3114, 2010.

\bibitem{JJLL1}
Q. Ju, H. Li, Yong Li, and S. Jiang.
\newblock Quasi-neutral limit of the two-fluid {E}uler-{P}oisson system.
\newblock {\em Commun. Pure Appl. Anal.}, 9(6):1577--1590, 2010.

\bibitem{JP}
A.~J{\"u}ngel and Y.-J. Peng.
\newblock A hierarchy of hydrodynamic models for plasmas. {Q}uasi-neutral
  limits in the drift-diffusion equations.
\newblock {\em Asymptot. Anal.}, 28(1):49--73, 2001.

\bibitem{Klainerman-Majda}
{S.  Klainerman, S., and A.  Majda.}
\newblock Singular limits of quasilinear hyperbolic systems with large
  parameters and the incompressible limit of compressible fluids.
\newblock {\em Comm. Pure Appl. Math. 34}, 4 (1981), 481--524.


\bibitem{LL}
M.~A. Lieberman and A.~J. Lichtenberg.
\newblock {\em Principles of Plasma Discharges and Materials Processing}.
\newblock John Wiley \& Sons, Inc., New-York, 1994.

\bibitem{Loe}
G.~Loeper.
\newblock{Quasi-neutral limit of the Euler--Poisson and
  Euler--Monge--Amp{\`e}re systems}.
\newblock {\em Comm. Partial Differential Equations},
  30(8):1141--1167, 2005.

\bibitem{Mar}
P.~Markowich.
\newblock {\em The stationary semiconductor device equations}.
\newblock Computational Microelectronics. Springer-Verlag, Vienna, 1986.


\bibitem{Metivier-Zumbrun}
{G.  M{\'e}tivier  and  K. Zumbrun.}
\newblock Large viscous boundary layers for noncharacteristic nonlinear
  hyperbolic problems.
\newblock {\em Mem. Amer. Math. Soc. 175}, 826 (2005), vi+107.



\bibitem{PW}
Y.-J. Peng and Y.-G. Wang.
\newblock Boundary layers and quasi-neutral limit in steady state
  {E}uler-{P}oisson equations for potential flows.
\newblock {\em Nonlinearity}, 17(3):835--849, 2004.

\bibitem{Peng}
Y.J Peng.
\newblock Asymptotic limits of one-dimensional hydrodynamic models for plasmas
  and semiconductors.
\newblock {\em Chinese Ann. Math. Ser. B}, 23(1):25--36, 2002.

\bibitem{PWaa}
Y.J. Peng and Y.G. Wang.
\newblock Convergence of compressible {E}uler-{P}oisson equations to
  incompressible type {E}uler equations.
\newblock {\em Asymptot. Analysis}, 41(2):141--160, 2005.

\bibitem{PW2}
Y. J.  Peng and S. Wang.
\newblock Convergence of compressible {E}uler-{M}axwell equations to
  incompressible {E}uler equations.
\newblock {\em Comm. Partial Differential Equations}, 33(1-3):349--376, 2008.


\bibitem{Rousset}
{F.  Rousset.}
\newblock Stability of large {E}kman boundary layers in rotating fluids.
\newblock {\em Arch. Ration. Mech. Anal. 172}, 2 (2004), 213--245.

\bibitem{SCH}
S.~Schochet.
\newblock The compressible {E}uler equations in a bounded domain: existence of
  solutions and the incompressible limit.
\newblock {\em Comm. {M}ath. {P}hys.}, 104(1):49--75, 1986.

\bibitem{SS}
M.~Slemrod and N.~Sternberg.
\newblock Quasi-neutral limit for {E}uler-{P}oisson system.
\newblock {\em J. Nonlinear Sci.}, 11(3):193--209, 2001.

\bibitem{SUZ}
M.~Suzuki.
\newblock {Asymptotic stability of stationnary solutions to the Euler-Poisson
  equations arising in plasma physics}.
\newblock {\em Kinet. and Relat. Mod.}, 4(2):569--588, 2011.

\bibitem{Temam-Wang}
{R. Temam and X.  Wang}
\newblock Boundary layers associated with incompressible {N}avier-{S}tokes
  equations: the noncharacteristic boundary case.
\newblock {\em J. Differential Equations 179}, 2 (2002), 647--686.

\bibitem{VIO}
I.~Violet.
\newblock High-order expansions in the quasi-neutral limit of the
  {E}uler-{P}oisson system for a potential flow.
\newblock {\em Proc. Roy. Soc. Edinburgh Sect. A}, 137(5):1101--1118, 2007.

\bibitem{WAN}
S.~Wang.
\newblock Quasineutral limit of {E}uler-{P}oisson system with and without
  viscosity.
\newblock {\em Comm. Partial Differential Equations}, 29(3-4):419--456, 2004.


\end{thebibliography}

\end{document}